\definecolor{link}{rgb}{0.18,0.25,0.63}
\definecolor{myred}{rgb}{0.7,0.25,0.2}
\numberwithin{equation}{section}
\g@addto@macro{\endabstract}{\@setabstract}
\newcommand{\authorfootnotes}{\renewcommand\thefootnote{\@fnsymbol\c@footnote}}%
\newtheorem{prop}{Proposition}[section]
\newtheorem*{prop*}{Proposition}
\newtheorem{rem}[prop]{Remark}
\newtheorem*{rem*}{Remark}
\newtheorem{thm}[prop]{Theorem}
\newtheorem*{thm*}{Theorem}
\newtheorem{defn}[prop]{Definition}
\newtheorem*{defn*}{Definition}
\newtheorem{lem}[prop]{Lemma}
\newtheorem*{lem*}{Lemma}
\newtheorem{cor}[prop]{Corollary}
\newtheorem*{cor*}{Corollary}
\DeclareMathOperator{\TV}{TV}
\DeclareMathOperator{\BV}{BV}
\newcommand{\dkl}{D_{\text{KL}}}
\DeclareMathOperator{\range}{Rg}
\DeclareMathOperator{\dive}{div}
\DeclareMathOperator{\domain}{\operatorname{dom}}
\DeclareMathOperator{\proj}{proj}
\DeclareMathOperator{\prox}{prox}
\newcommand{\ds}{{d^*}}
\newcommand{\G}{ \mathcal{G} }
\newcommand{\Hc}{ \mathcal{H} }
\newcommand{\R}{\mathbb{R}}
\newcommand{\N}{\mathbb{N}}
\newcommand{\Du}{\mathrm{D} u }
\newcommand{\beq}{\begin{equation}}
\newcommand{\eeq}{\end{equation}}
\newcommand{\I}{\mathcal{I}}
\begin{document}

\definecolor{link}{rgb}{0,0,0}
\definecolor{mygrey}{rgb}{0.34,0.34,0.34}
\def\blue #1{{\color{blue}#1}}

 \begin{center}
 \large
  \textbf{A function space framework for structural total variation regularization with applications in inverse problems} \par \bigskip \bigskip
  %Structural prior based
   \normalsize
  \textsc{Michael Hinterm\"uller}\textsuperscript{$\dagger\ddagger$}, \textsc{Martin Holler} \textsuperscript{$\mathsection$} \textsc{and Kostas Papafitsoros}  \textsuperscript{$\dagger$}
\let\thefootnote\relax\footnote{
\textsuperscript{$\dagger$}Weierstrass Institute for Applied Analysis and Stochastics (WIAS), Mohrenstrasse 39, 10117, Berlin, Germany
}
\let\thefootnote\relax\footnote{
\textsuperscript{$\ddagger$}Institute for Mathematics, Humboldt University of Berlin, Unter den Linden 6, 10099, Berlin, Germany}
\let\thefootnote\relax\footnote{
\textsuperscript{$\mathsection$}Institute for Mathematics and Scientific Computing, University of Graz, Heinrichstrasse 36, A-8010, Graz, Austria. The institute is a member of NAWI Graz (\texttt{www.nawigraz.at}). M.~H. is a member of BioTechMed Graz (\texttt{www.biotechmed.at}).}

\let\thefootnote\relax\footnote{
\hspace{3.2pt}Emails: \href{mailto:Michael.Hintermueller@wias-berlin.de}{\nolinkurl{Michael.Hintermueller@wias-berlin.de}}}
 \let\thefootnote\relax\footnote{
 \hspace{37pt}\href{mailto: Martin.Holler@uni-graz.at}{\nolinkurl{Martin.Holler@uni-graz.at}}
 }
 \let\thefootnote\relax\footnote{
 \hspace{37pt}\href{mailto:Kostas.Papafitsoros@wias-berlin.de}{\nolinkurl{Kostas.Papafitsoros@wias-berlin.de}}
 }

\end{center}

\begin{abstract}
In this work, we introduce a function space setting for a wide class of structural/weighted total variation (TV) regularization methods motivated by their applications in inverse problems. In particular, we consider a regularizer that is the appropriate lower semi-continuous envelope (relaxation) of a suitable total variation type functional initially defined for sufficiently smooth functions. We study examples where this relaxation can be expressed explicitly, and we also provide refinements for weighted total variation for a wide range of weights. Since an integral characterization of the relaxation in function space is, in general, not always available, we show that, for a rather general linear inverse problems setting, instead of the classical Tikhonov regularization problem, one can equivalently solve a saddle-point problem where no a priori knowledge of an explicit formulation of the structural TV functional is needed. In particular, motivated by concrete applications, we deduce corresponding results for linear inverse problems with norm and Poisson log-likelihood data discrepancy terms.
Finally, we  provide proof-of-concept numerical examples where we solve the saddle-point problem for weighted TV denoising as well as for MR guided PET image reconstruction.
\end{abstract}

%\tableofcontents

%

%Keywords: Structural Prior Regularization, Total Variation, Linear Inverse Problems, Functions of a Measure, Convex Duality, Kullback Leibler Divergence, Positron Emission Tomography

\definecolor{link}{rgb}{0.18,0.25,0.63}

%\vspace{2cm}

\section{Introduction}
In many classical applications of inverse problems, rather than measuring data from a single channel, recently the simultaneous acquisition from multiple channels has gained importance. 
Besides having different sources of information available, a main advantage of such multiple measurements is due to the possibility of exploiting correlations between different data channels in the inversion process, often leading to a significant improvement for each individual channel.
In particular, when the underlying quantities of interest can be visualized as image data, these correlations typically correspond to joint structures in images. In view of this background, multimodality and multicontrast imaging explore such joint structures for improved reconstruction. Successful applications of these techniques can be found for instance in biomedical imaging \cite{bathke2017improved, Ehrhardt_structure_TV, Ehrhardt_PET_prior, Ehrhardt_PET_MRI,Holler_PET_MRI,Rasch17,Rigie2015,Schramm17_pls, Vunckx2012}, geosciences \cite{Steklova2017}, electron microscopy \cite{haberfehlner2014nanoscale} and several more.

In this context, one distinguishes two different approaches for exploiting correlations: (i) Joint reconstruction techniques that treat all available channels equally, such as \cite{Holler_PET_MRI}, and (ii) structural-prior-based regularization techniques that assume some ground truth structural information to be available.
Here we focus on a particular class of type (ii), namely {\it structural total-variation-type regularization functionals}, i.e., functionals which integrate a spatially-dependent pointwise function of the image gradient for regularization. In this vein, given some prior information $v$, we consider a regularization functional of the type
\[J(u)=\int_{\om} j_{v}(x,\nabla u(x)) \, dx,\]
which is used in a Tikhonov-type regularization problem where $u$ is reconstructed by solving
\begin{equation}\label{intro_general_primal}
\min_{u} \int_{\om} j_{v}(x,\nabla u(x)) \, dx + (\lambda D\circ K)(u).
\end{equation}
Here, $D$ denotes a data discrepancy term and $K$ a bounded linear operator which reflects the forward model. A very basic, formal example of such a regularization is given when $v$ denotes the underlying ground truth image and we incorporate information on its gradient by defining $j_v(x,z) := \frac{1}{|\nabla v(x)|} |z|$, where $|\cdot|$ is some norm. This yields 
\[J(u)=\int_{\om} \frac{1}{|\nabla v(x)|} |\nabla u(x)| \, dx,\]
which corresponds to a weighted total variation (TV) functional. 

Even though problems of the form \eqref{intro_general_primal} limit the exchange of information to the gradient level, they cover a large set of existing works in particular in the context of medical image processing. This can be explained on the one hand by the popularity of TV-type regularization for imaging in general, which is due to its capability of recovering jump discontinuities. On the other hand, the gradient level seems very well suited for exploiting correlations between different contrasts or modalities as it primarily encodes structural information, i.e., to some extent it is independent of the absolute magnitude of the signal. 
With the goal of enforcing parallel level sets, in \cite{Ehrhardt_PLV}, for instance, the regularizer $J$ was chosen for a given image $v$ as
\begin{equation}\label{intro_funct_1}
J(u)=\int_{\om}\phi\left (\psi(|\nabla v|_{\beta} |\nabla v|_{\beta})-\psi (|(\nabla u\cdot \nabla v)|_{\beta^{2}}) \right )\,dx,
\end{equation}
where $\phi, \psi$ are appropriate increasing functions, $|\cdot|_{\beta}, \, |\cdot |_{\beta^ 2}$ denote smoothed norms, and $a\cdot b$ is the scalar (``dot'') product of vectors $a,b\in\mathbb{R}^d$. This definition is motivated by the fact that for two vectors $w,y$ the quantity $|w||y|-|(w,y)|$ is zero if and only if they are parallel to each other. Particular instances of \eqref{intro_funct_1} were then also used in \cite{Ehrhardt_PET_MRI} for joint MR-PET reconstruction.
A similar functional, but in the spirit of \cite{Kaipio_structural} is
\begin{equation}\label{intro_funct_2}
J(u)=\int_{\om} |\nabla u|^{2}-(\nabla u \cdot w)^{2}\,dx.
\end{equation}
Here $w$ denotes some a priori vector field that contains gradient information.
In the context of multicontrast MR, the authors in \cite{Ehrhardt_structure_TV} choose
\begin{equation}\label{intro_funct_3}
J(u)=\int_{\om}\left | \left (I -\frac{\nabla v \otimes \nabla v}{|\nabla v|^{2}} \right )\nabla u \right | \,dx=\int_{\om} |\nabla u| \sin \theta \,dx. \end{equation}
where again a smoothed version of the absolute value was used in the denominator. Here, $\theta$ denotes the angle between $\nabla u$ and $\nabla v$.
%The authors  in \cite{Ehrhardt_structure_TV} stress the relationship of \eqref{intro_funct_2} and \eqref{intro_funct_3} as the later can be written as \cite{Ehrhardt_thesis}
Observe that the latter functional can also be written as 
\[\int_{\om} \left (|\nabla u|^{2}-\left(\nabla u\cdot\frac{\nabla v}{|\nabla v|}\right)^{2} \right )^{1/2},\]
bearing similarity to the functional \eqref{intro_funct_2}.

We note that the regularization approaches above have been considered only in discrete settings, despite the original continuous formulations. Concerning the latter and as indicated above in connection with \eqref{intro_general_primal}, it is natural to consider $u\in \bv(\om)$, i.e., $|u|$ is Lebesgue integrable and $u$ is of bounded total variation, still allowing for jump discontinuities, i.e., sharp edges.
As a consequence, the (generalized) gradient $Du$ of $u$ is in general a finite Radon measure, only.
%
%In function space, a major complication stems from the fact that since one is interested in having $u\in \bv(\om)$, , the gradient of $u$ is generally a finite Radon measure. 
This fact, however, challenges the proper defininition of regularization functionals of the above types in an associated function space setting. The first part of our work aims at addressing precisely this issue. In fact, resorting to the concept of functions of a measure \cite{aubert2006mathematical} we propose to use the convex biconjugate as a regularizer. Indeed, starting from a general function $j:\om\times \RR^{d}\to [0,\infty)$, $\om\subset \RR^{d}$ with minimal assumptions, e.g., convexity, linear growth, $1$-homogeneity  in the second variable, we define the functional $J:L^{p}(\om)\to \RR$ as 
\begin{equation} \label{intro:eq:original_reg_func}
J(u) = \begin{cases}
\int _\Omega j(x,\nabla u(x) ) \,dx &\text{if } u \in W^{1,1}(\Omega), \\
\infty & \text{else,}
\end{cases}
\end{equation}
where we omit the dependence of $j$ on the prior $v$ for the sake of unburdening notation. Here, $L^p(\om)$ and $W^{1,1}(\om)$ denote the usual Lebesgue and Sobolev spaces; see \cite{adams_sobolev}.
We note that $J$ in \eqref{intro:eq:original_reg_func} is not suitable for variational regularization as it is finite only for a class of rather regular functions (i.e., in $W^{1,1}(\om)$) and it is not lower semi-continuous in an appropriate topology. As a remedy, we propose to resort to the convex biconjugate $J^{\ast\ast}$ of $J$, which we call {\it structural TV functional}. We recall that $J^{\ast\ast}$ coincides with the lower semi-continuous envelope (relaxation) of $J$ with respect to $L^{p}$ convergence. In general, $J^{\ast\ast}$ may not have an explicit representation, but we show that it can always be expressed in a dual form as
\begin{equation}\label{intro:J_astast_dual}
J^{\ast\ast}(u)=\sup_{g\in Q} \int_{\om} u\,\di g\,dx,\quad \text{for } u\in L^{p}(\om),
\end{equation}
where
\begin{equation}\label{intro:Q}
 Q := \left\{g \in W^q_0(\di;\om)\cap L^\infty(\Omega,\R^d): \;j^\circ(x,g(x))\leq 1 \text{ for almost every (a.e.) } x \in \Omega\right\},
 \end{equation}
see \eqref{mh:supp} below for the precise definition of the support function $j^{\circ}$, and we refer to \cite{Girault} for details on $W^q_0(\di;\om)$. Nevertheless, based on a recent result by Amar, De Cicco and Fusco \cite{Fusco08}, by linking $J^{\ast\ast}$ to the relaxed functional of $J$ with respect to $L^{1}$ convergence we are able to provide an integral  representation, under additional assumptions. For instance, in the case where $j(x,z)=\alpha(x)|z|$,  $\alpha\in \bv(\om)$, with $\alpha\ge 0$, $J^{\ast\ast}$ is equal to the weighted TV-functional
\[J^{\ast\ast}(u)=\int_{\om}\alpha^{-}\,d|Du|,\quad u\in \bv(\om),\]
with $\alpha^{-}$ the approximate lower limit of $\alpha$; see \eqref{mh:2} below for its definition. The set $Q$ then reads
\[ Q = \left\{g \in W^q_0(\di;\om)\cap L^\infty(\Omega,\R^d): \;|g(x)|\leq \alpha(x) \text{ for a.e. } x \in \Omega\right\}.
\]
Interestingly, as a consequence of this formulation, we get certain density results of convex intersections in the spirit of \cite{sing_mol, Hint_Rau_density, hint_rau_ros} as a byproduct; compare Section  \ref{sec:refine_weightedTV}.
%We also stress  that the dual functions $g$ belong here to the Banach space $W^q_0(\di;\om)$ instead of $C_{c}^{\infty}(\om,\RR^{d})$. We show that in the weighted TV, under some assumptions on $\alpha$, one can indeed use smooth dual functions and obtain the same functional. As a byproduct of that,  we are able to obtain certain density results  of convex intersections of the type 
%\begin{align*}\label{intro:density_div_BV_pos}
%\overline{\left\{ \di\phi \st \phi\in C_{c}^{\infty}(\om,\RR^{d}),\; |\phi|\le \alpha   \right \}}
%= \left\{\di g \st g\in W_{0}^{d}(\di;\om),\; |g|\le \alpha\right \},
%\end{align*}
%where here the closure taken with respect to strong $L^{p}(\om)$ topology. We note that density results of the above type have recently gained attention in the literature \cite{sing_mol, Hint_Rau_density, hint_rau_ros}. 

Taking advantage of duality theory, in the second part of the paper we use the structural TV functional $J^{\ast\ast}$ for the regularization of linear inverse problems. In particular we study the general minimization problem
\begin{equation}\label{intro:primal}
\inf_{u\in L^{p}(\om)} J^{\ast\ast}(u)+ (\lambda D\circ K)(u).
\end{equation}
As emphasized above, an explicit representation of the functional $J^{\ast\ast}$ is available only under some additional, perhaps restrictive assumptions. In order to solve \eqref{intro:primal} without invoking such assumptions, we employ Fenchel-Rockafellar duality and show, in the continuous setting, equivalence of \eqref{intro:primal} to a saddle-point problem of the form
%In order to make that more precise with a simple example, consider again $j(x,z)=\alpha(x)|z|$ where $\alpha\in C(\overline{\om})$ with $\alpha\ge 0$. Note that as we have mentioned before, if $u\in \bv(\om)$, then $J^{\ast\ast}(u)=\int_{\om}\alpha\,d|Du|$. However, the fact $\alpha$ may vanish leads to non-coercivity in $\bv(\om)$ for the functional $J^{\ast\ast}$ and it is not guaranteed that the problem \eqref{intro:primal} has a solution in $\bv(\om)$. Hence one cannot make a concise integral formulation of the energy in \eqref{intro:primal} for $u\in L^{p}(\om)$.
%
%Another approach would be to try and solve the Fenchel (pre)dual problem of \eqref{intro:primal} which reads 
%\begin{equation} \label{intro:predual}
%\inf_{\substack{p\in W^q_0(\dive;\om) \\ p \in Q}}  (\lambda D\circ K)^*(\di p ).
%\end{equation}
%However, in this case, the convex conjugate of the discrepancy term $(D\circ K)^*$ is not always directly computable. Hence we propose to solve the corresponding saddle-point problem instead, which reads

\begin{equation}\label{intro:saddle_point}
 \inf_{\substack{p \in W^q_0(\di;\om) \\ p \in Q}} \sup_{u\in L^p (\Omega)} (\di p ,u)  -  (\lambda D\circ K)(u),
 \end{equation}
 where $(\cdot ,\cdot)$ denotes an appropriate pairing.
This is achieved under assumptions which are tight in the sense that we can provide a counterexample where, without these assumptions, even existence of a solution for \eqref{intro:primal} fails.
The major advantage of the above saddle-point reformulation is that it allows to obtain a solution of the original problem without requiring an explicit form of neither $J^ {**}$ nor $(D \circ K)^ *$. Note that the latter would be required for solving the predual problem. Furthermore, it has a format directly amenable to duality-based numerical optimization algorithms.

The equivalence to a saddle-point reformulation is obtained under rather general assumptions on the data discrepancy term $D$, which, as corollary, allows us to cover the case of any norm discrepancy term as well as the case of a log-likelihood term for Poisson noise, which is relevant for instance for PET image reconstruction and electron tomography. The latter leads to the minimization problem 
 \begin{equation}\label{intro:eq:min_prob_pet_formal}
\min _{u \in L^p(\Omega) } J^{**}(u) + \lambda \int _\Sigma K u - f \log (Ku + c_0)\,d\sigma + \I_{[0,\infty)} (u).
\end{equation}
where $K$ denotes a Radon-transform-type operator, $f,\, c_0$ some given data, and $\I$ is the  indicator function for a set $M$, i.e., $\I_M(u) = 0$ if $u \in M$ and $\I_M(u) = +\infty $ otherwise.
% Note that no a priori knowledge is needed here, neither for the formulation of $J^{\ast\ast}$ nor for  $(D\circ K)^*$. In the main result of Section \ref{sec:duality}, Theorem \ref{prop:general_duality_result}, we show under certain conditions that the problems \eqref{intro:primal}, \eqref{intro:predual} and \eqref{intro:saddle_point} are indeed equivalent. These conditions are quite natural and require either coercivity for $J^{\ast\ast}$ together with a condition that generalises the statement that constant functions belong to the range $K^*$, or that the inversion of $K$ is essentially well-posed in the sense that $K^\ast$ has closed range and $K$ has finite dimensional kernel.
% In the final part of our analysis, we consider a more precise settting, suitable for PET image reconstruction.  In particular we consider the minimisation problem  
%and the discrepancy term consists of the Poisson log-likelihood and a positivity constraint.
%Note that in this case the data are typically corrupted by Poisson noise hence the choice of the suitable discrepancy term. We consider a slightly modified version of this discrepancy term by extending to the case when $Ku$ has negative values as well, however without changing the solution of the problem and we show that this modified version of \eqref{intro:eq:min_prob_pet_formal} can fit into the general formulation of \eqref{intro:primal}.

 Finally, we show the versatility of our approach with proof-of-concept numerical examples in weighted TV denoising with vanishing weight function and MR guided PET reconstruction.
 
 We note here that there is previous work on the analysis of weighted and/or structural TV regularization in an infinite dimension setting.  In \cite{grasmair2010anisotropic}, another instance of structural-TV type functionals is employed, but the work only considers the case of image denoising. Further, the authors simultaneously optimize over the image data and an anisotropy in the TV-term, which leads to a non-convex problem. Regarding properties of solutions of weighted TV denoising we refer to the work by Jalalzai \cite{jalalzai2014discontinuities} as well as to \cite{mine_spatial}.  Finally, we mention that in \cite{novaga_weighted} the authors analyze a weighted TV regularization model for vortex density models. 

\subsection*{Structure of the paper}
The paper is organized as follows: In Section \ref{sec:notation}, we fix our notation and we remind the reader of basic facts concerning functions of bounded variation and $W(\mathrm{div})$ spaces. 

In Section \ref{sec:relaxation} we describe the relaxation framework for the structural TV functional. Under some conditions, we provide an integral representation of the relaxation based on a result of \cite{Fusco08} and we also provide a characterization of its subdifferential. 

Some refinements for weighted TV functional are given in Section \ref{sec:refine_weightedTV} in the case of continuous and lower semi-continuous weight functions. We show that the functional can be defined in a dual fashion, using smooth test functions and as a byproduct we obtain certain density results of convex intersections in the spirit of \cite{Hint_Rau_density, hint_rau_ros}. 
%We also show that for a large class of functions, these test functions can be taken to be supported in the support of the weight function $\alpha$ instead of the whole domain.

Section \ref{sec:duality} contains the main duality result of the paper. In particular, we show that under certain mild assumptions, the variational regularization problem with the relaxed structural TV functional as regularizer can be equivalently formulated as a saddle-point problem which requires knowledge neither of the explicit form of the relaxation--as it is the case for the primal problem --nor of the convex conjugate of the discrepancy term--as it is the case for the predual problem. As particular application, we elaborate on the case of Poisson log-likelihood discrepancy terms and show how the result can be transferred to this situation.

Finally, in Section \ref{sec:numerics} we provide proof-of-concept numerical examples, where we solve our saddle point problem for weighted TV denoising with vanishing weight function and for MR-guided PET image reconstruction.

\section{Notation and Preliminaries}\label{sec:notation}

%In this section we fix our notation and we also recall some notions and results that the reader might be unfamiliar with. 
\subsection{Functions of bounded variation}
Throughout the paper, $\om\subset\RR^{d}$, with $d\geq 2$, will be a bounded, open set with Lipschitz boundary, and we denote $d^{\ast}=\frac{d}{d-1}$. By $p,q$ we always denote two real numbers such that $p,q \in [1,\infty]$ and $q = \frac{p}{p-1}$ if $p\in (1,\infty)$, $q = \infty$ if $p=1$, and $q = 1$ if $p=\infty$. 
%Whenever necessary, $p$  will be further restricted, e.g., to $[1,d^*]$ or $(1,d^*]$. 

 The space of \emph{functions of bounded variation} on $\om$ is denoted by $\bv(\om)$. We have that $u\in\bv(\om)$ if and only if it is in $L^1(\Omega)$ and its distributional derivative is a bounded Radon measure, denoted by $Du$. The total variation $\tv(u)$ of $u$ is defined to be the total variation of  that measure, i.e.,  $\tv(u)=|Du|(\om)$ and it is equal to 
\begin{equation}\label{tv_def}
|Du|(\om)=\sup\left \{ \int_{\om} u\,\di\phi\,dx : \phi\in C_{c}^{\infty}(\om,\RR^{d}),\;|\phi(x)|\le 1,\; \forall x\in\om\right \}.
\end{equation}
%We are going to need a few definitions from the theory of fine properties of $\bv$ functions. 
The measure $Du$ can be decomposed into
\[Du=D^{a}u+D^{j}u + D^{c}u,\]
where $D^{a}u$ is the absolutely continuous with respect to Lebesgue measure $\mathcal{L}^{d}$, with density function denoted by $\nabla u$, $D^{j}u$ denotes the jump part which is the restriction to the jump set $J_{u}$ of $u$, and $D^{c}u$ is the Cantor part of $Du$. We recall that $D^j_u$ is defined over the set of points in $\om$ for which that $u^{+}(x)> u^{-}(x)$ where
\begin{align}
 u^+(x) &= \inf \left\{ t \in [-\infty,\infty] : \lim _{r\rightarrow 0} \frac{\mathcal{L}^{d}(\{v>t\}\cap B(x,r))}{r^d}=0\right\},\label{mh:1}\\
 u^-(x) &= \sup \left\{ t \in [-\infty,\infty] : \lim _{r\rightarrow 0} \frac{\mathcal{L}^{d}(\{v<t\}\cap B(x,r))}{r^d}=0\right\},\label{mh:2}
\end{align}
are the approximate upper and lower limits of $u$, respectively.
With these definitions, the total variation of the measure $D^{j}u$ can be written as
\[|D^{j}u|(\om)=\int_{J_{u}} |u^{+}(x)-u^{-}(x)|\, d\mathcal{H}^{d-1},\]
where $\mathcal{H}^{d-1}$ denotes the $(d-1)$-dimensional Hausdorff measure.
%The remainder part $D^{c}u$ is called the Cantor part of $Du$. 
The density functions of the measures $D^{j}u$ and $D^{c}u$ with respect to $|D^{j}u|$ and $|D^{c}u|$ are denoted by $\sigma_{D^{j}u}$ and $\sigma_{D^{c}u}$, respectively.
For further details about the space $\bv(\om)$, we refer the reader to \cite{AmbrosioBV, EvansGariepy}.\\

%For every function $j:\om\times\RR^{d}\to [0,\infty)$, which is convex with respect to the second variable, we define its \emph{recession function}
%\[j^{\infty}(x,z):=\lim_{t\to\infty} \frac{j(x,tz)}{t},\quad \text{for every }(x,z)\in \om\times\RR^{d}.\]
%It can be checked that $j^{\infty}$ is positively one-homogeneous  with respect to the second variable, i.e., $j^{\infty}(x,tz)=tj^{\infty}(x,z)$ for every $t>0$, $(x,z)\in \om\times\RR^{d}$.

\subsection{The space $W^{q}(\di;\om)$}

As the Banach space $W^{q}(\di;\om)$, with $q \in [1,\infty)$,  plays a major role in our work, we recall here some basic facts. 

\begin{defn}[$ W^q(\di;\Omega) $]
\label{def:Wq(div)}Let $ 1\leq q <\infty  $ and $g\in L^{q}(\Omega,\mathbb{R}^{d})$. We have
$\di g\in L^{q}(\Omega)$ if there exists $w\in L^{q}(\Omega)$
such that for all $\phi\in C_{c}^{\infty}(\Omega)$\[
\int_{\Omega}\nabla \phi \cdot g\,dx=-\int_{\Omega}\phi w\,dx.\] 
Furthermore we define \begin{equation*}
W^{q}(\di;\Omega):=\left\{ g\in L^{q}(\Omega,\mathbb{R}^{d}) : \di g\in L^{q}(\Omega)\right\},
\end{equation*}
with the norm $\Vert g\Vert_{W^{q}(\di;\om)}^{q}:=\Vert g\Vert_{L^{q}(\om)}^{q}+\Vert\di g\Vert_{L^{q}(\om)}^{q}.$\end{defn}

\begin{rem}
By density of $C_{c}^{\infty}(\Omega)$ in $L^{p}(\Omega)$ we have $\di g=w$ as $w\in L^{q}(\Omega)$ is unique. By completeness of $L^{q}(\Omega)$ and $L^{q}(\Omega,\mathbb{R}^{d})$ it follows that $W^{q}(\di;\om)$ is a Banach space when equipped with $\Vert\cdot\Vert_{W^{q}(\di;\om)}$.
\end{rem}

We now state some general properties of $W^{q}(\di;\om)$. As $ W^q (\di;\om) $ is just a straightforward generalization of the well-known space $ H(\di;\om ):= W^2 (\di;\om)$, these results can be proven readily by generalizing from $ H(\di;\om) $; see \cite[Chapter 1]{Girault} for details on the latter.

\begin{prop}[Density] \label{pro:Wq_div_(Density)} 
%Let $\Omega$ be a bounded Lipschitz domain or $ \Omega=\mathbb{R}^d $
Let $q \in [1,\infty)$. Then
$C^{\infty}(\overline{\Omega},\mathbb{R}^{d})$
is dense in $W^{q}(\di;\Omega)$ with respect to $\Vert\cdot\Vert_{W^{q}(\di; \om)}$.
\end{prop}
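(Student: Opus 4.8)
The plan is to follow the classical density argument for $H(\di;\om) = W^2(\di;\om)$ from \cite[Chapter 1]{Girault}, tracking the exponent $q$ throughout and checking that every step survives for $1 \le q < \infty$. The two analytic facts that carry the proof are that, for $q < \infty$, translation is continuous in $L^q$ and mollification converges in $L^q$; both fail at $q = \infty$, which is exactly why that case is excluded. Throughout I use that the divergence commutes with translation and with convolution by a mollifier $\rho_\epsilon$.

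First I would localize. Since $\om$ is bounded with Lipschitz boundary, cover $\overline\om$ by finitely many open sets $U_0, U_1, \dots, U_N$ with $U_0 \Subset \om$ and each $U_i$ (for $i\ge 1$) a neighborhood in which, after a rigid motion, $\partial\om$ is the graph $\{x_d = \gamma_i(x')\}$ of a Lipschitz function and $\om$ lies locally in $\{x_d > \gamma_i(x')\}$. Choose a smooth partition of unity $\{\theta_i\}$ subordinate to this cover, so $g = \sum_i \theta_i g$. Each piece satisfies $\theta_i g \in L^q(\om,\R^d)$ and $\di(\theta_i g) = \theta_i\,\di g + \grad\theta_i \cdot g \in L^q(\om)$, hence $\theta_i g \in W^q(\di;\om)$ with support in $U_i$; it therefore suffices to approximate each $\theta_i g$ separately.

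For the interior piece $\theta_0 g$, which is compactly supported in $\om$, standard mollification already works: $\theta_0 g * \rho_\epsilon$ converges to $\theta_0 g$ in $L^q$, and since $\di(\theta_0 g * \rho_\epsilon) = (\di(\theta_0 g)) * \rho_\epsilon$, the divergences converge in $L^q$ as well. For a boundary piece $g_i := \theta_i g$ I would first translate inward. In the local coordinates write $g_i^\tau(x) := g_i(x + \tau e_d)$ for small $\tau > 0$; because $\om \cap U_i = \{x_d > \gamma_i(x')\}$, the shifted function uses only interior values of $g_i$ on a set of the form $\{x_d > \gamma_i(x') - \tau\}$, which is an open neighborhood of $\overline\om \cap \supp\theta_i$ separated from $\partial\om$ by a positive distance. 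Continuity of translation in $L^q$, applied to both $g_i$ and $\di g_i$ and using $\di(g_i^\tau) = (\di g_i)^\tau$, gives $g_i^\tau \to g_i$ in $W^q(\di;\om)$ as $\tau \to 0$. Then for each fixed $\tau$ I mollify $g_i^\tau$ with parameter $\epsilon$ smaller than the gap, so that $g_i^\tau * \rho_\epsilon$ is well defined and smooth on a neighborhood of $\overline\om$ and converges to $g_i^\tau$ in $W^q(\di;\om)$ as $\epsilon \to 0$. A two-parameter diagonal passage $\epsilon = \epsilon(\tau) \to 0$ then produces smooth approximants of $g_i$, and summing over $i$ yields a $C^\infty(\overline\om,\R^d)$ approximation of $g$.

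The main obstacle is the boundary analysis: one must verify that the inward translation direction can be chosen uniformly on each chart so that $g_i^\tau$ is genuinely built from interior values on a full neighborhood of $\overline\om \cap \supp\theta_i$, which is precisely where the Lipschitz graph structure (equivalently, a uniform interior cone condition) enters. The interior mollification and the partition-of-unity bookkeeping are routine, and the only place where the hypothesis $q < \infty$ is essential is in the two convergence statements, namely translation continuity and mollifier convergence in $L^q$.
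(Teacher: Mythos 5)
Your argument is correct, but it is not the route the paper takes: the paper gives no proof of this proposition at all, deferring to \cite[Chapter 1]{Girault}, where (and likewise in this paper's own Appendix proof of Proposition \ref{pro:equiv_W0_div}) the density is obtained by \emph{duality}: one takes a continuous linear functional $L$ on $W^q(\di;\Omega)$ vanishing on $C^\infty(\overline\Omega,\mathbb{R}^d)$, represents it via Hahn--Banach as $L(g)=\int_\Omega l\cdot g\,dx+\int_\Omega l_{d+1}\,\di g\,dx$ with $l,l_{d+1}$ in the dual Lebesgue space, deduces from the vanishing on smooth fields that $\nabla \tilde l_{d+1}=\tilde l$ for the zero-extensions on $\mathbb{R}^d$, hence $l_{d+1}\in W^{1,p}_0(\Omega)$, and concludes $L\equiv 0$ from the definition of the weak divergence. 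Your proof is instead the direct constructive one (partition of unity on a Lipschitz chart cover, inward translation along the graph direction, mollification), and it is sound: the product rule $\di(\theta_i g)=\theta_i\,\di g+\nabla\theta_i\cdot g$ keeps each localized piece in $W^q(\di;\Omega)$, translation and mollification commute with the weak divergence on the relevant sets, and the only quantitative point you leave implicit --- that the mollification radius must satisfy $\epsilon\lesssim\tau/(1+M)$ with $M$ the local Lipschitz constant so that the mollifier only samples the translated function where it is defined --- is exactly the ``routine bookkeeping'' you flag. The trade-off is genuine: your approach produces explicit approximants and treats $q=1$ on exactly the same footing as $q\in(1,\infty)$, whereas the duality proof at $q=1$ lands on $l_{d+1}\in W^{1,\infty}$ with compact support and needs a small extra argument (bounded a.e.\ approximation rather than norm density of $C_c^\infty$ in $W^{1,\infty}_0$) to close; on the other hand, the duality argument is shorter and is the template the paper reuses verbatim for the characterization of $W^q_0(\di;\Omega)$ in the appendix, which is presumably why the authors chose it.
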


\begin{prop}[Normal trace]
\label{pro:W(div)-trace} Let $q \in [1,\infty)$ and denote by $n_{\Omega}(x)\in\mathbb{R}^{d}$ the outer normal vector
to $\partial\Omega$ at $x\in\partial\Omega$. Then the mapping 
$$
\tau:C^{\infty}(\overline{\Omega},\mathbb{R}^{d})  \rightarrow  \left(W^{1-\frac{1}{p},p}(\partial\Omega)\right)^{*},\qquad
g  \mapsto  \tau(g),
$$
with $\tau(g)(v):=\int_{\partial\Omega}(g,n_{\Omega})v\, d\mathcal{H}^{d-1}$ for
$v\in W^{1-\frac{1}{p},p}(\partial\Omega)$, can be extended to a
linear, continuous mapping, also denoted by $\tau:W^{q}(\di;\Omega)\rightarrow\left(W^{1-\frac{1}{p},p}(\partial\Omega)\right)^{*}$.
Further we have a Gauss-Green formula for $W^{q}(\di;\om)$ functions:\[
\int_{\Omega}\nabla v\cdot g\,dx+\int_{\Omega}v\,\di g\,dx=\tau(g)(v)\quad\text{for all }v\in W^{1,p}(\Omega),\;g\in W^{q}(\di;\Omega).\]
\end{prop}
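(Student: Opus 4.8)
The plan is to follow the classical blueprint for $H(\di;\Omega)=W^2(\di;\Omega)$ as in \cite[Chapter~1]{Girault}, replacing the $L^2$ self-pairing by $L^p$--$L^q$ H\"older duality and the trace space $H^{1/2}(\partial\Omega)$ by $W^{1-1/p,p}(\partial\Omega)$. The key observation is that, for a \emph{smooth} vector field $g$, the boundary functional $\tau(g)$ is already controlled by the $W^q(\di;\Omega)$-norm of $g$; once this a priori estimate is established, the extension to all of $W^q(\di;\Omega)$ follows from the density of $C^\infty(\overline\Omega,\mathbb{R}^d)$ (Proposition~\ref{pro:Wq_div_(Density)}) together with the completeness of the dual space $\left(W^{1-1/p,p}(\partial\Omega)\right)^*$. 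The two external ingredients I would invoke are the classical Gauss--Green formula for a smooth $g$ paired against $W^{1,p}$ functions, and the trace theorem for $W^{1,p}(\Omega)$ on Lipschitz domains, namely that the trace $\gamma_0:W^{1,p}(\Omega)\to W^{1-1/p,p}(\partial\Omega)$ is bounded, surjective, and admits a bounded linear right inverse (lifting) $R$; see \cite{adams_sobolev}.

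First I would establish the estimate. Fix $g\in C^\infty(\overline\Omega,\mathbb{R}^d)$ and $v\in W^{1-1/p,p}(\partial\Omega)$, and let $V:=Rv\in W^{1,p}(\Omega)$ be a lifting with $\gamma_0 V=v$. Since $g$ is smooth, the divergence theorem applied to $vg$ (first for $V\in C^\infty(\overline\Omega)$, then extended to $V\in W^{1,p}(\Omega)$ by density of $C^\infty(\overline\Omega)$ in $W^{1,p}(\Omega)$ and continuity of all three terms in $V$) yields
\[
\tau(g)(v)=\int_{\partial\Omega}(g,n_\Omega)\,v\,d\mathcal{H}^{d-1}=\int_\Omega \nabla V\cdot g\,dx+\int_\Omega V\,\di g\,dx.
\]
H\"older's inequality then gives $|\tau(g)(v)|\le \|g\|_{L^q}\|\nabla V\|_{L^p}+\|\di g\|_{L^q}\|V\|_{L^p}\le C\|g\|_{W^q(\di;\Omega)}\|V\|_{W^{1,p}(\Omega)}$, and boundedness of $R$ gives $\|V\|_{W^{1,p}(\Omega)}\le C\|v\|_{W^{1-1/p,p}(\partial\Omega)}$. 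Taking the supremum over $v$ yields $\|\tau(g)\|_{(W^{1-1/p,p}(\partial\Omega))^*}\le C\|g\|_{W^q(\di;\Omega)}$ for every smooth $g$, with $C$ independent of $g$.

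With this estimate at hand, I would extend $\tau$ by continuity: given $g\in W^q(\di;\Omega)$, Proposition~\ref{pro:Wq_div_(Density)} furnishes $g_n\in C^\infty(\overline\Omega,\mathbb{R}^d)$ with $g_n\to g$ in $W^q(\di;\Omega)$, and the estimate shows that $(\tau(g_n))_n$ is Cauchy in the Banach space $\left(W^{1-1/p,p}(\partial\Omega)\right)^*$; its limit defines $\tau(g)$, independently of the approximating sequence, and inherits linearity and the norm bound. Finally, the Gauss--Green identity follows by passing to the limit in $\int_\Omega \nabla v\cdot g_n\,dx+\int_\Omega v\,\di g_n\,dx=\tau(g_n)(\gamma_0 v)$ for fixed $v\in W^{1,p}(\Omega)$: the left-hand side converges since $\nabla v,v\in L^p(\Omega)$ while $g_n\to g$ and $\di g_n\to\di g$ in $L^q(\Omega)$, and the right-hand side converges by continuity of the extended $\tau$. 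I expect the only genuine obstacle to be the correct identification and invocation of the trace theorem on Lipschitz domains with the sharp boundary regularity $W^{1-1/p,p}(\partial\Omega)$ and its bounded lifting $R$; the remainder is the routine $L^p$--$L^q$ adaptation of the $H(\di;\Omega)$ arguments.
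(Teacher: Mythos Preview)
Your proposal is correct and follows precisely the approach the paper indicates: the paper does not give an explicit proof of this proposition, but remarks that it is a ``straightforward generalization'' of the $H(\di;\Omega)$ case in \cite[Chapter~1]{Girault}, and your argument carries out exactly that generalization via the $L^p$--$L^q$ H\"older pairing, the trace theorem with bounded lifting, and the density result of Proposition~\ref{pro:Wq_div_(Density)}.
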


\begin{defn}
\label{def:W0(div)}For $ 1\leq q < \infty $, we define \[
W_{0}^{q}(\di;\Omega)=\overline{C_{c}^{\infty}(\Omega,\mathbb{R}^d)}^{\Vert\cdot\Vert_{W^{q}(\di;\om)}}.\]
\end{defn}

The next proposition also uses ideas from \cite{Girault} but since its proof is slightly more involved, we include it in Appendix \ref{sec:app}.
\begin{prop}
\label{pro:equiv_W0_div} For $q \in [1,\infty)$ we have $W_{0}^{q}(\di;\Omega)=\left\{ g : \tau(g)\equiv0\right\} =:\ker(\tau)$,
with $\tau$ the $W^{q}(\di;\Omega)$-trace operator as in Proposition
\ref{pro:W(div)-trace} and $\tau(g)\equiv0$ is understood in the sense of $\left(W^{1-\frac{1}{p},p}(\partial\Omega)\right)^{*}$.\end{prop}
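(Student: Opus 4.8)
The claim is the equality of two sets, so the natural route is to prove the two inclusions $W_0^q(\di;\Omega)\subseteq\ker(\tau)$ and $\ker(\tau)\subseteq W_0^q(\di;\Omega)$ separately. The first is soft and the second requires a boundary approximation argument in the spirit of the $H(\di;\Omega)$ theory of \cite{Girault}.

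For the inclusion $W_0^q(\di;\Omega)\subseteq\ker(\tau)$ I would argue as follows. For any $\phi\in C_c^\infty(\Omega,\R^d)$ the function $\phi$ vanishes in a neighborhood of $\partial\Omega$, so from the definition of $\tau$ on smooth fields in Proposition \ref{pro:W(div)-trace} we get $\tau(\phi)(v)=\int_{\partial\Omega}(\phi,n_\Omega)v\,d\mathcal{H}^{d-1}=0$ for every $v\in W^{1-\frac1p,p}(\partial\Omega)$, i.e. $\phi\in\ker(\tau)$. Since $\tau:W^q(\di;\Omega)\to\bigl(W^{1-\frac1p,p}(\partial\Omega)\bigr)^{*}$ is continuous, $\ker(\tau)$ is closed in $W^q(\di;\Omega)$; as it contains $C_c^\infty(\Omega,\R^d)$, it contains the closure $W_0^q(\di;\Omega)$ by Definition \ref{def:W0(div)}.

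For the reverse inclusion, fix $g\in W^q(\di;\Omega)$ with $\tau(g)\equiv0$; the goal is to approximate $g$ in $\|\cdot\|_{W^q(\di;\om)}$ by elements of $C_c^\infty(\Omega,\R^d)$. The first step is to pass to the extension by zero $\tilde g$ of $g$ to $\R^d$: testing against $\psi\in C_c^\infty(\R^d)$ and using the Gauss--Green formula of Proposition \ref{pro:W(div)-trace} with $v=\psi|_\Omega\in W^{1,p}(\Omega)$ together with $\tau(g)=0$ yields
\[
\int_{\R^d}\nabla\psi\cdot\tilde g\,dx=\int_\Omega\nabla\psi\cdot g\,dx=-\int_\Omega\psi\,\di g\,dx=-\int_{\R^d}\psi\,\widetilde{\di g}\,dx,
\]
so that $\tilde g\in W^q(\di;\R^d)$ with $\di\tilde g=\widetilde{\di g}\in L^q(\R^d)$. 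The point of this step is that the vanishing normal trace is exactly what prevents the extension from producing a singular divergence concentrated on $\partial\Omega$. The second step is a standard localization: since $\partial\Omega$ is Lipschitz, cover it by finitely many open sets in each of which $\Omega$ is, after a rigid motion, the subgraph of a Lipschitz function, add one interior patch compactly contained in $\Omega$, and choose a subordinate partition of unity $\{\theta_j\}$. Writing $\tilde g=\sum_j\theta_j\tilde g$ and using the product rule $\di(\theta_j\tilde g)=\theta_j\di\tilde g+\nabla\theta_j\cdot\tilde g\in L^q$ reduces the problem to approximating each $\theta_j\tilde g$. For the interior piece, mollification directly produces $C_c^\infty(\Omega,\R^d)$ approximants. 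For each boundary piece I would translate transversally into $\Omega$, say $g_\varepsilon(x):=(\theta_j\tilde g)(x+\varepsilon e)$ for a direction $e$ pointing into the domain in the chosen chart, which moves the support to a set at positive distance from $\partial\Omega$ inside the patch; continuity of translation in $L^q$ (here $q<\infty$ is used) applied to both $\theta_j\tilde g$ and its divergence gives $g_\varepsilon\to\theta_j\tilde g$ in $W^q(\di;\R^d)$, and a subsequent mollification at a scale smaller than the separation distance yields $C_c^\infty(\Omega,\R^d)$ approximants. Summing over $j$ and restricting to $\Omega$ shows $g\in W_0^q(\di;\Omega)$.

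The main obstacle is the last step: organizing the Lipschitz charts so that a single transversal translation direction is available on each boundary patch, and verifying that translation followed by mollification converges in the full $W^q(\di;\Omega)$ norm (not merely in $L^q$) uniformly over the finitely many patches. The divergence term must be tracked carefully through the product rule, the translation, and the mollification, but once the extension identity $\di\tilde g=\widetilde{\di g}$ is established, these are routine continuity arguments valid for every $q\in[1,\infty)$.
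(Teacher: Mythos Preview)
Your proof is correct, but it takes a genuinely different route from the paper's. You construct explicit approximants: extend $g$ by zero using $\tau(g)=0$ to see that $\di\tilde g=\widetilde{\di g}$, localize with a partition of unity adapted to Lipschitz charts, translate transversally into $\Omega$, and mollify. This is the standard ``Sobolev-style'' argument and works for every $q\in[1,\infty)$ exactly as you sketch.

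The paper instead argues by duality, following \cite[Theorem I.2.6]{Girault}. After noting the easy inclusion, it shows that $C_c^\infty(\Omega,\R^d)$ is dense in $\ker(\tau)$ by checking that any $L\in\ker(\tau)^*$ vanishing on $C_c^\infty(\Omega,\R^d)$ is identically zero. Extend $L$ by Hahn--Banach to $\overline{L}\in W^q(\di;\Omega)^*$ and represent it by some $l=(l_1,\dots,l_{d+1})\in L^p(\Omega,\R^{d+1})$ via $\overline{L}(g)=\sum_i\int l_i g_i+\int l_{d+1}\di g$. Testing with $\phi\in C_c^\infty(\Omega,\R^d)$ forces $l_{d+1}\in W^{1,p}(\Omega)$ with $\nabla l_{d+1}=(l_1,\dots,l_d)$, and then the Gauss--Green formula of Proposition~\ref{pro:W(div)-trace} applied to $l_{d+1}$ and any $h\in\ker(\tau)$ gives $L(h)=0$.

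The duality argument is shorter and completely sidesteps boundary geometry: no charts, no transversal directions, no partition of unity. The price is that one needs the representation of $W^q(\di;\Omega)^*$ and the Gauss--Green formula already in place. Your constructive approach is more elementary in that it uses only $L^q$ continuity of translation and mollification, and it has the added benefit of producing an explicit approximation scheme; its cost is the bookkeeping with Lipschitz charts that you correctly flag as the main obstacle.
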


From this fact, another equivalent characterization of $W_{0}^{q}(\di;\Omega)$
can be obtained readily.
\begin{cor}
\label{cor:other_equv_W0_div}For $q \in [1,\infty)$ and $g\in W^{q}(\di;\Omega)$ we have
$g\in W_{0}^{q}(\di;\Omega)$ if and only if\begin{equation}
\int_{\Omega}\nabla v\cdot g\,dx=-\int_{\Omega}v\,\di g\,dx,\quad\text{for all }v\in W^{1,p}(\Omega).\label{eq:anoth_equv_W0dive}\end{equation}
\end{cor}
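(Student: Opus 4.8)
The plan is to prove Corollary \ref{cor:other_equv_W0_div} as a direct consequence of Proposition \ref{pro:equiv_W0_div} together with the Gauss--Green formula of Proposition \ref{pro:W(div)-trace}. The key observation is that for $g \in W^q(\di;\om)$, the Gauss--Green formula reads
\[
\int_\Omega \nabla v \cdot g \,dx + \int_\Omega v\,\di g\,dx = \tau(g)(v) \quad \text{for all } v \in W^{1,p}(\Omega).
\]
Hence the integral identity \eqref{eq:anoth_equv_W0dive} holds for all $v \in W^{1,p}(\Omega)$ if and only if $\tau(g)(v) = 0$ for all such $v$. The whole matter therefore reduces to showing that this last condition is equivalent to $\tau(g) \equiv 0$ in $\left(W^{1-\frac{1}{p},p}(\partial\Omega)\right)^*$, which by Proposition \ref{pro:equiv_W0_div} is in turn equivalent to $g \in W_0^q(\di;\Omega)$.

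Concretely, I would argue both implications. For the forward direction, suppose $g \in W_0^q(\di;\Omega)$. By Proposition \ref{pro:equiv_W0_div} we have $\tau(g) \equiv 0$, so the Gauss--Green formula immediately yields $\int_\Omega \nabla v \cdot g\,dx + \int_\Omega v\,\di g\,dx = 0$ for every $v \in W^{1,p}(\Omega)$, which is exactly \eqref{eq:anoth_equv_W0dive}. For the converse, suppose \eqref{eq:anoth_equv_W0dive} holds for all $v \in W^{1,p}(\Omega)$. Then Gauss--Green gives $\tau(g)(v) = 0$ for all $v \in W^{1,p}(\Omega)$, and I must conclude $\tau(g) \equiv 0$ as an element of the dual space $\left(W^{1-\frac{1}{p},p}(\partial\Omega)\right)^*$.

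The only genuine point requiring care is the converse: the functional $\tau(g)$ is tested here against traces of $W^{1,p}(\Omega)$ functions, whereas vanishing in the dual of $W^{1-\frac{1}{p},p}(\partial\Omega)$ means vanishing against \emph{all} elements of that boundary space. The resolution is the surjectivity of the trace operator: the mapping $v \mapsto v|_{\partial\Omega}$ from $W^{1,p}(\Omega)$ onto $W^{1-\frac{1}{p},p}(\partial\Omega)$ is surjective (indeed it admits a bounded right inverse). Since $\tau(g)$ depends on $v$ only through its boundary trace, the fact that it annihilates all $v \in W^{1,p}(\Omega)$ means it annihilates every element of the range of the trace map, which is all of $W^{1-\frac{1}{p},p}(\partial\Omega)$. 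Therefore $\tau(g) \equiv 0$ in the dual, and Proposition \ref{pro:equiv_W0_div} gives $g \in W_0^q(\di;\Omega)$, completing the equivalence. I expect this surjectivity step to be the main—though standard—ingredient, and it is where one invokes the classical trace theory for Sobolev spaces on Lipschitz domains.
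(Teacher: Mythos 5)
Your argument is correct and is precisely the route the paper intends: the corollary is stated as following ``readily'' from Proposition \ref{pro:equiv_W0_div}, and the intended mechanism is exactly the combination of the Gauss--Green formula with the identification $W_0^q(\di;\Omega)=\ker(\tau)$, plus surjectivity of the Sobolev trace map onto $W^{1-\frac{1}{p},p}(\partial\Omega)$ for the converse. You have correctly isolated the one nontrivial point (that annihilating all traces of $W^{1,p}(\Omega)$ functions forces $\tau(g)\equiv 0$ in the dual of the boundary space), so nothing further is needed.
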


\section{Structural TV as a lower semi-continuous envelope}\label{sec:relaxation}

The goal of this section is to obtain a predual representation of a general TV-based functional that includes the case of weighted TV for a general choice of weights. To this aim, we will define the corresponding functional as the $L^{p}$-lower semi-continuous envelope of a restriction to $W^{1,1}$ functions, with $p\in [1,\infty)$. The approach is motivated by the paper of Bouchitte and Dal Maso \cite{DalMaso93}. We start with a few definitions.

%\begin{defn}\label{lbl:J_def}
By $j:\Omega \times \RR^d \rightarrow [0,\infty)$ we always denote a function satisfying the following conditions:
\begin{itemize}
\item[(J1)] For a.e. $x\in \Omega$, $j(x,\cdot)$ is convex and positively 1-homogeneous on $\RR^d$.
\item[(J2)] There exists  $\gamma>0$ such that
\[0\le j(x,z) \le \gamma (1+|z|)\quad\text{for a.e. }x\in \Omega\text{ and every }z\in \RR^d.\]
\item[(J3)] For every $z\in\RR^{d}$, $j(x,z)=j(x,-z)$, i.e., $j$ is an even function in the second variable.
\end{itemize}
%\end{defn}

Furthermore, for $p \in [1,\infty)$ we define $J:L^{p}(\Omega) \rightarrow	 [0,\infty]$  as
\begin{equation} \label{eq:original_reg_func}
J(u) := \begin{cases}
\int _\Omega j(x,\nabla u(x) ) \,dx &\text{if } u \in W^{1,1}(\Omega), \\
\infty & \text{else.}
\end{cases}
\end{equation}

\begin{rem}
We note that $j(x,z) = \alpha(x)|z|$ with any $\alpha :\Omega \rightarrow [0,\infty)$ bounded above satisfies the above assumptions (J1)--(J3).
\end{rem}

%Note as well that for a function $j$ as above we have that its recession function $j^{\infty}:\om\times \RR^{d}$ is finite almost everywhere.
In what follows, convex conjugation of $j$ will always be understood with respect to the second argument, and convex conjugation of $J$ is performed in $L^{p} (\Omega)$. %In the case that $j$ is positively one-homogeneous with respect to the second variable, 
Due to positive 1-homogeneity, we get the following well known representation of $j^*$. For its formulation, we define the support function
\begin{equation}\label{mh:supp}
j^\circ(x,z^*):= \underset{z:j(x,z) \leq 1}{\sup}z^*\cdot z,
\end{equation}
and denote by $j^*$ the convex conjugate of $j$; see \cite{Ekeland} for more information on the latter concept.

\begin{prop}\label{lbl:j_star}
% If $j$ is positively one-homogeneous with respect to the second variable, then 
Let $z^*\in\mathbb{R}^d$. Then, for any $x \in \Omega$ we have 
$ j^*(x,z^*) = \mathcal{I}_{A_x} (z^*)$, 
 with $\mathcal{I}_{A_x}$ the convex indicator function of the set %$A_x$, 
%\[
$A_x = \{ z \in \RR^d : j^\circ (x,z) \leq 1\}$.
%, \]
%where $j^\circ(x,z^*):= \underset{z:j(x,z) \leq 1}{\sup}(z^*,z)$.
\end{prop}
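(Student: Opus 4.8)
The plan is to recognize this as the classical fact that the Fenchel conjugate of a nonnegative, convex, positively $1$-homogeneous function is the indicator function of the closed convex sublevel set of its support function. I would fix an $x\in\Omega$ for which (J1)--(J2) hold and, to unburden notation, abbreviate $j:=j(x,\cdot)$, $j^\circ:=j^\circ(x,\cdot)$ and $A:=A_x$; since everything is done pointwise in $x$, the measurable dependence on $x$ plays no role. By the convention that conjugation of $j$ is taken in the second argument,
\[
j^*(x,z^*)=\sup_{z\in\RR^d}\bigl(z^*\cdot z-j(z)\bigr),
\]
and I would prove the claim by showing that this supremum equals $0$ when $z^*\in A$ and $+\infty$ when $z^*\notin A$.

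For the first case, assume $z^*\in A$, i.e. $j^\circ(z^*)=\sup_{j(z)\le1}z^*\cdot z\le1$. The lower bound $j^*(x,z^*)\ge0$ is immediate by testing $z=0$, using that positive $1$-homogeneity forces $j(0)=0$. For the matching upper bound I would establish $z^*\cdot z\le j(z)$ for every $z$. When $j(z)>0$, normalizing $\bar z=z/j(z)$ yields $j(\bar z)=1$ by homogeneity, hence $z^*\cdot\bar z\le j^\circ(z^*)\le1$, and rescaling gives $z^*\cdot z\le j(z)$. The subtle case is $j(z)=0$, where this normalization is unavailable: here I would argue directly by scaling, noting $j(tz)=0\le1$ for all $t>0$, so $t\,(z^*\cdot z)=z^*\cdot(tz)\le j^\circ(z^*)\le1$ for all $t>0$, which forces $z^*\cdot z\le0=j(z)$. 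Combining the two subcases, $z^*\cdot z-j(z)\le0$ for all $z$, so the supremum is $\le0$ and therefore equal to $0$.

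For the second case, assume $z^*\notin A$, i.e. $j^\circ(z^*)>1$. By definition of the support function there is a $z_0$ with $j(z_0)\le1$ and $z^*\cdot z_0>1$. Exploiting $1$-homogeneity along the ray $tz_0$, $t>0$, I would bound
\[
z^*\cdot(t z_0)-j(t z_0)\ge t\,(z^*\cdot z_0)-t\,j(z_0)\ge t\,(z^*\cdot z_0-1)\xrightarrow[t\to\infty]{}+\infty,
\]
so that $j^*(x,z^*)=+\infty$. Together with the first case this yields exactly $j^*(x,z^*)=\mathcal{I}_{A_x}(z^*)$.

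The only genuinely delicate point is the treatment of directions in which $j(x,\cdot)$ vanishes — its recession/lineality directions — where the normalization $z/j(z)$ cannot be used; I expect this to be the main (though minor) obstacle, and I would handle it via the elementary scaling argument indicated above rather than by normalization. I note that the evenness assumption (J3) is not needed for this proposition; only convexity and positive $1$-homogeneity from (J1), together with nonnegativity, enter the argument.
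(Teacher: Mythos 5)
Your proof is correct and follows essentially the same route as the paper: a direct computation of the conjugate exploiting positive $1$-homogeneity, with the dichotomy $j^\circ(x,z^*)\le 1$ versus $j^\circ(x,z^*)>1$. In fact your treatment is slightly more careful than the paper's, whose parametrization $z=\lambda\tilde z$ with $j(x,\tilde z)=1$ silently omits nonzero directions where $j(x,\cdot)$ vanishes; your scaling argument $t\,(z^*\cdot z)\le j^\circ(x,z^*)\le 1$ for all $t>0$ closes exactly that gap, and your observation that (J3) is not needed here is also accurate (the paper invokes it only for the incidental remark that $j^\circ\ge 0$).
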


\begin{proof}
In case $j(x,\cdot) = 0$ the assertion holds true trivially. Hence we assume that there exists $z \in \RR^d$ with $j(x,z) \neq 0$. 
%Now note that for any $z\in \RR^ d$ with $j(x,z) \neq 0$ 
For such a point $z$, we can write
\[ j(x,z) =  j\left (x,j(x,z)\frac{z}{j(x,z)}\right) =  \lambda j(x,\tilde{z}) \]
with $\lambda = j(x,z) \geq 0$ and for $\tilde{z}$ such that $j(x,\tilde{z}) =1$. %The assertion holds trivially also for the case $j(x,z) = 0$, 
Hence we get
\[ j^ \circ (x,z^ *) = \sup_{\lambda \in [0,1]} \sup_{j(x,z) =1} \lambda (z^ *\cdot z) = \sup_{j(x,z) =1} z^ *\cdot z\geq 0, \] 
where the non-negativity follows from the fact that $j(x,\cdot)$ is even.
%since $\sup_{j(x,z) =1} (z^ *,z) \geq 0$. 
We further have 
\begin{align*}
j^ *(x,z^ *) 
&= \sup _z \,\{z^ *\cdot z - j(x,z)\} = \sup _{\lambda \geq 0} \sup _{j(x,z)=1} \{ z^*\cdot (\lambda z) - \lambda j(x, z) \} \\
&= \sup _{\lambda \geq 0} \sup _{j(x,z)=1} \lambda ( z^*\cdot z - 1 )= \sup _{\lambda \geq 0 } \lambda (j^ \circ(x,z^ *) - 1) \\
&= \left.\begin{cases} \infty &\text{if } j^ \circ (x,z^ *) > 1, \\ 0 & \text{if } j^ \circ (x,z^ *) \leq  1\end{cases}\right\} = \mathcal{I}_{A_x}(z^*),
\end{align*}
which completes the proof.

\end{proof}

\begin{rem}\label{lbl:j_star_alpha}
From the proposition above if follows for $j(x,z) = \alpha(x)|z|$ that  
\[
j^\circ (x,z^*) =
 \begin{cases} 
\frac{|z^*|}{\alpha(x)} &\text{if } \alpha(x) \neq 0, \\
\mathcal{I}_{\{0\}} (z^*) &\text{else,}
\end{cases}
\]
and hence
$ A_x = \{ z^* \in \RR^d :  |z^*| \leq \alpha (x) \}$.
\end{rem}

We note that by definition and density we have for $u^* \in L^q(\Omega)$ with $q = \frac{p}{p-1}$ that
\begin{align*}
J^*(u^*) & = \sup _{u \in L^{p}(\Omega)} \{(u^*,u) - J(u)\} = \sup _{u \in \bv(\Omega)} \{(u^*,u) - J(u)\} 
= \sup _{u \in W^{1,1}(\Omega)} \{(u^*,u) - J(u) \}.
\end{align*} 
Here we write $(v,w):=\int_\Omega v w\,dx$ for $v\in L^q(\Omega)$ and $w\in L^p(\Omega)$. 
The next proposition follows the lines of \cite{DalMaso93} and provides a characterization $J^*$, which also holds without the positive 1-homogeneity assumption on $j$. 

\begin{prop}\label{lbl:J_star}
Let $p\in (1,\infty)$. Then we have for all $u^* \in L^q(\Omega)$ that 
\[ J^*(u^*) = \min _{g \in K(u^*)} \int _\Omega j^*(x,g(x))\, dx ,\]
where $K(u^*):=\{ g \in W^q_0 (\di;\om)\cap L^\infty(\Omega,\RR^d) : -\di g = u^* \}$, and we set $\min \emptyset := +\infty$.
\end{prop}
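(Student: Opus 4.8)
The plan is to establish the identity by convex (Fenchel-type) duality, splitting it into the easy inequality ``$\le$'' (which also disposes of the case $J^*(u^*)=+\infty$) and the attainment of the minimum. For the easy direction I would fix any $g\in K(u^*)$ and any $u\in W^{1,1}(\Omega)$ for which the pairing $(u^*,u)$ is finite, and apply the Fenchel--Young inequality pointwise to $j(x,\cdot)$ and $j^*(x,\cdot)$:
\[
\int_\Omega j^*(x,g(x))\,dx \ge \int_\Omega \big(g\cdot\nabla u - j(x,\nabla u)\big)\,dx = \int_\Omega g\cdot\nabla u\,dx - J(u).
\]
Since $g\in W^q_0(\di;\Omega)$, Corollary \ref{cor:other_equv_W0_div} gives $\int_\Omega g\cdot\nabla u\,dx = -\int_\Omega u\,\di g\,dx = (u^*,u)$, so that $\int_\Omega j^*(x,g)\,dx \ge (u^*,u)-J(u)$. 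Taking the supremum over $u$ and the infimum over $g$ yields $J^*(u^*)\le \inf_{g\in K(u^*)}\int_\Omega j^*(x,g)\,dx$. One mild point to address is that Corollary \ref{cor:other_equv_W0_div} is stated for test functions in $W^{1,p}(\Omega)$, whereas the supremum defining $J^*$ runs over $W^{1,1}(\Omega)$; this is resolved by mollification, approximating $u\in W^{1,1}(\Omega)\cap L^p(\Omega)$ by smooth functions converging to $u$ in $L^p$ and with gradients converging in $L^1$, and using the linear growth bound (J2) to pass to the limit in $J$. In particular, if no admissible $g$ has finite integral then $J^*(u^*)=+\infty$ and both sides agree, so from now on I may assume $J^*(u^*)<\infty$.

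For the reverse inequality and the attainment I would run a perturbation/duality argument in the spirit of \cite{DalMaso93}. Define $\Phi:L^1(\Omega,\R^d)\to[-\infty,+\infty]$ by
\[
\Phi(w):=\inf_{u}\Big\{\int_\Omega j(x,\nabla u - w)\,dx - (u^*,u)\Big\},
\]
the infimum being over $u\in W^{1,1}(\Omega)\cap L^p(\Omega)$, so that $\Phi(0)=-J^*(u^*)$. This $\Phi$ is convex as a partial infimum of a jointly convex functional, and by (J2) it satisfies $\Phi(w)\le\int_\Omega j(x,-w)\,dx\le\gamma(|\Omega|+\|w\|_{L^1})$, hence is bounded above on bounded sets and finite everywhere. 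Since $\Phi(0)=-J^*(u^*)>-\infty$, convexity forces $\Phi$ to be real-valued on all of $L^1(\Omega,\R^d)$; being convex and locally bounded above, it is then continuous at $0$, and in particular subdifferentiable there.

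The crucial step is to compute $\Phi^*$ on $(L^1(\Omega,\R^d))^*=L^\infty(\Omega,\R^d)$. Substituting $v=\nabla u - w$ splits the conjugate into two independent suprema,
\[
\Phi^*(g)=\sup_{u}\big\{(g,\nabla u)+(u^*,u)\big\}+\sup_{v\in L^1(\Omega,\R^d)}\big\{-(g,v)-\textstyle\int_\Omega j(x,v)\,dx\big\}.
\]
The second supremum equals $\int_\Omega j^*(x,-g)\,dx=\int_\Omega j^*(x,g)\,dx$, where the interchange of supremum and integral is the standard duality for convex integral functionals (with $j$ a normal convex integrand) and the last equality uses evenness (J3). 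The first supremum, being that of a linear functional in $u$, is $0$ when $u\mapsto\int_\Omega g\cdot\nabla u\,dx+(u^*,u)$ vanishes on $W^{1,1}(\Omega)\cap L^p(\Omega)$ and $+\infty$ otherwise; vanishing on $C_c^\infty(\Omega)$ forces $\di g=u^*\in L^q(\Omega)$, so $g\in W^q(\di;\Omega)$, and vanishing on all of $W^{1,p}(\Omega)$ forces $g\in W^q_0(\di;\Omega)$ by Corollary \ref{cor:other_equv_W0_div}. Hence $\Phi^*(g)=\int_\Omega j^*(x,g)\,dx$ for $g\in K(-u^*)$ and $+\infty$ otherwise. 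Subdifferentiability of $\Phi$ at $0$ now yields $g^*\in\partial\Phi(0)\subset L^\infty(\Omega,\R^d)$ with $\Phi(0)+\Phi^*(g^*)=0$; then $g^*\in K(-u^*)$ and, by (J3), $-g^*\in K(u^*)$ with $\int_\Omega j^*(x,-g^*)\,dx=\Phi^*(g^*)=-\Phi(0)=J^*(u^*)$. This simultaneously proves the remaining inequality and shows that the infimum is attained, so it is a minimum.

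I expect the main obstacle to be the rigorous computation of $\Phi^*$, for two reasons. First, the gradient is an unbounded operator from $L^p$ into $L^1(\Omega,\R^d)$, which is precisely why the duality must be set up through the perturbation $\Phi$ rather than by applying Fenchel--Rockafellar to $\nabla$ directly. Second, the two identifications inside $\Phi^*$ — the integral-functional conjugacy $\big(\int j\big)^*=\int j^*$ and the passage from ``the linear functional vanishes on $W^{1,p}(\Omega)$'' to the precise boundary condition $g\in W^q_0(\di;\Omega)$ — each demand care with measurability/normal-integrand theory and with the density arguments underlying Corollary \ref{cor:other_equv_W0_div}. By contrast, once continuity of $\Phi$ at $0$ is established, the attainment of the dual minimizer follows essentially automatically from subdifferentiability.
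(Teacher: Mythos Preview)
Your argument is correct and is essentially the paper's proof at a lower level of abstraction: both compute $J^*$ by Fenchel duality for the composition $J=F\circ\Lambda$, with $F(w)=\int_\Omega j(x,w)\,dx$ on $L^1(\Omega,\R^d)$ and $\Lambda=\nabla$ a closed, densely defined unbounded operator $L^p(\Omega)\to L^1(\Omega,\R^d)$, using the linear growth (J2) for continuity of $F$ (equivalently, boundedness of your $\Phi$ near $0$) to obtain attainment of the dual minimum, and then identifying the adjoint constraint $\Lambda^*y^*=u^*$ with $K(u^*)$ via the Gauss--Green formula and Corollary~\ref{cor:other_equv_W0_div}. The only difference is packaging: the paper invokes \cite[Theorem~19]{Rockafellar74} as a black box for the duality step, whereas you reprove that theorem by hand through the perturbation function and its subdifferentiability at $0$; note also that the paper's route does not need evenness (J3), the sign flip you fix with it being an artifact of perturbing by $\nabla u-w$ rather than $\nabla u+w$.
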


\begin{proof}
We define the map $F:L^1(\Omega,\RR^d) \rightarrow \overline{\RR}$ with $F(p) := \int _\Omega j(x,p(x))\,  dx$. Due to (J2), $F$ is well defined. Then, by \cite[Theorem X.2.1]{Ekeland}, we get for $F^*:L^\infty(\Omega,\RR^d) \rightarrow \overline{\RR} $ that
\[ F^*(p^*) = \int _\Omega j^*(x,p^*(x))\, dx .\]
Further, we define the unbounded operator $\Lambda:L^{p}(\Omega) \rightarrow L^1(\Omega,\mathbb{R}^d)$ by $\mathrm{dom}(\Lambda) = W^{1,1}(\Omega)\cap L^p(\Omega)$ and $\Lambda u := \nabla u$. Then $\Lambda$ is densely defined and closed. We note that $J$ can be re-written as
\[ J(u) = \begin{cases} 
F(\Lambda u) &\text{if } u \in \mathrm{dom}(\Lambda), \\
+\infty &\text{else.}
\end{cases}
\] Due to $|F(p) |\leq \gamma \operatorname{meas}(\Omega) + \gamma \|p\|_{L^{1}(\om)}$ by (J2), where $\operatorname{meas}(\cdot)$ denotes the Lebesgue measure of a set, $J$ is bounded in a neighborhood of any $\Lambda u$ with $u\in \mathrm{dom}(\Lambda)$. Hence, it follows  from \cite[Theorem 19]{Rockafellar74} that
\begin{align*}
J^* (u^*) & = \min \left\{ F^*(y^*) : y ^*\in \mathrm{dom} (\Lambda^*), \, \Lambda^*y^* = u^* \right\}\\
& = \min \left\{ \int _\Omega j^*(x,y^*(x))\, dx : y ^*\in \mathrm{dom} (\Lambda^*), \, \Lambda^*y^* = u^* \right\}.
\end{align*} 
To complete the proof, it is left to show that we have for any $u^* \in L^q(\Omega)$ that
\[ \{ y ^*\in \mathrm{dom} (\Lambda^*) : \Lambda^*y^* = u^* \} = \{ g \in W^q_0 (\di;\om)\cap L^\infty(\Omega,\RR^d) : -\di g = u^* \}. \]
We first show the inclusion ``$\supset$'': In case the set on the right-hand side above is empty, then the inclusion holds trivially; otherwise take $g \in W^q_0 (\di;\om)\cap L^\infty(\Omega,\RR^d)$ with $-\di g = u^*$ and $v \in \mathrm{dom}(\Lambda) \subset W^{1,1}(\Omega)$. By density \cite[Theorem 4.3]{EvansGariepy} there exists a sequence $(v_n)_{n\in\NN}$ in $W^{1,1}(\Omega) \cap C^\infty (\overline{\Omega})$ converging to $v$ in $W^{1,1}(\om)$ for which we can also assume that $v_n \rightarrow v $ in $L^{p}(\Omega)$ since $v \in L^{p}(\Omega)$. Then, by the Gauss-Green theorem for $W_0^q(\di;\om)$, see  \eqref{eq:anoth_equv_W0dive}, we get
\[ \int _\Omega \di g \,v\,dx\leftarrow \int _\Omega \di g \,v_n\,dx = - \int _\Omega g \cdot \Lambda v_n \,dx\rightarrow - \int _\Omega g\cdot \Lambda v\,dx.\]
Hence $|(\Lambda v ,g )| \leq C_g \|v\|_{L^{p}(\om)}$ with some $C_g>0$, and thus $g \in \mathrm{dom} (\Lambda^*)$ and $-\di g = \Lambda ^* g$. To show the reverse inclusion ``$\subset$'', again assuming the set on the left-hand side to be non-empty, we take $y^* \in \mathrm{dom} (\Lambda^*)\subset L^\infty (\Omega,\RR^d)$, for which we want to show that $y^* \in W^q_0 (\di;\om)$ and $\Lambda^* y^* = -\di y^*$. By definition of $\mathrm{dom} (\Lambda^*)$ the mapping $v\mapsto \int _\Omega \Lambda^*y^* v\,dx$ is a continuous linear functional on $L^{p}(\Omega)$, hence there exists $w \in L^q(\Omega)$ such that 
\[ \int _\Omega w v\,dx = \int _\Omega \Lambda^*y^* v\,dx = \int _\Omega y^* \cdot \nabla v \,dx\]
for all $v \in \mathrm{dom} (\Lambda) \supset C^\infty (\overline{\Omega}) \supset C^\infty _c (\Omega)$. Hence, $y^* \in W^q(\di;\om)$. Further, we also get that $y \in \ker(\tau)$, with $\tau$ being the normal trace operator of Proposition \ref{pro:W(div)-trace}, and hence $y^* \in W^q_0 (\di;\om)$, which completes the proof.
\end{proof}
Using positive 1-homogeneity, Proposition \ref{lbl:j_star} immediately implies the following refinement.
\begin{cor}\label{lbl:J_star_alpha}
Under the assumption of Proposition \ref{lbl:J_star} we get that
\[ J^*(u^*) = \mathcal{I}_{\di (Q)}(u^*) \]
with
\[ Q = \left\{ g \in W^q_0(\di;\om) \cap L^\infty(\Omega,\RR^d) :\, j^\circ(x,g(x)) \leq 1 \text{ for a.e. } x\in \Omega \right\} .\]
In particular, if $j(x,z) = \alpha(x)|z|$ then we get
\[ Q = \left\{  g \in W^q_0(\di;\om) \cap L^\infty(\Omega,\RR^d) :\, |g(x)| \leq \alpha(x) \text{ for a.e. } x\in \Omega \right\} .\]
\end{cor}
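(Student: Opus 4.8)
The plan is to obtain the statement by directly combining the two preceding results, with no new analytic work: Proposition \ref{lbl:J_star} supplies the abstract representation of $J^*$ as a constrained minimum over divergence fields, and Proposition \ref{lbl:j_star} identifies the pointwise conjugate $j^*(x,\cdot)$ as an indicator function. Substituting the latter into the former collapses the minimization to a membership test.

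Concretely, I would start from $J^*(u^*) = \min_{g \in K(u^*)} \int_\Omega j^*(x,g(x))\,dx$ (valid for $p \in (1,\infty)$) and insert $j^*(x,\cdot) = \mathcal{I}_{A_x}$ with $A_x = \{z : j^\circ(x,z) \leq 1\}$ from Proposition \ref{lbl:j_star}. Then the integrand $j^*(x,g(x))$ takes only the values $0$ and $+\infty$ (measurability of $x \mapsto j^*(x,g(x))$ being inherited from the integral-functional setting already used in Proposition \ref{lbl:J_star}), so $\int_\Omega j^*(x,g(x))\,dx$ equals $0$ exactly when $j^\circ(x,g(x)) \leq 1$ for a.e.\ $x$, and equals $+\infty$ otherwise. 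Consequently the minimum over $g \in K(u^*)$ is $0$ precisely when $K(u^*)$ contains some $g$ satisfying the constraint defining $Q$ — that is, some $g \in Q$ with $-\di g = u^*$ — and is $+\infty$ otherwise. This reads as $J^*(u^*) = \mathcal{I}_{-\di(Q)}(u^*)$.

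It remains to replace $-\di(Q)$ by $\di(Q)$, and this is the only point requiring a small argument. I would invoke the evenness assumption (J3): the sublevel set $\{z : j(x,z) \leq 1\}$ is then symmetric, so $j^\circ(x,\cdot)$ is even and hence $g \in Q \iff -g \in Q$; since $W^q_0(\di;\om) \cap L^\infty(\Omega,\RR^d)$ is a linear space and $\di(-g) = -\di g$, the image $\di(Q)$ is symmetric, giving $-\di(Q) = \di(Q)$ and therefore $J^*(u^*) = \mathcal{I}_{\di(Q)}(u^*)$. For the special case $j(x,z) = \alpha(x)|z|$ I would simply quote Remark \ref{lbl:j_star_alpha}, which yields $A_x = \{z^* : |z^*| \leq \alpha(x)\}$, so that the constraint $j^\circ(x,g(x)) \leq 1$ becomes $|g(x)| \leq \alpha(x)$; one should note that this remains correct on the null set $\{\alpha = 0\}$, where $j^\circ(x,\cdot) \leq 1$ forces $g(x) = 0 = \alpha(x)$.

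Since every step is a substitution, there is no genuine obstacle here; the most delicate bookkeeping is the symmetry argument identifying $-\di(Q)$ with $\di(Q)$ and the correct treatment of the set where $\alpha$ vanishes in the explicit case.
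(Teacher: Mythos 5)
Your proposal is correct and follows exactly the route the paper intends: the paper states the corollary as an immediate consequence of Propositions \ref{lbl:j_star} and \ref{lbl:J_star}, and your substitution of $j^*(x,\cdot)=\mathcal{I}_{A_x}$ into the dual minimum is precisely that argument. The two details you add --- using (J3) to identify $-\di(Q)$ with $\di(Q)$, and checking the constraint on the set where $\alpha$ vanishes (which, to be precise, is the zero set of $\alpha$, not necessarily a Lebesgue-null set) --- are exactly the points the paper glosses over, and you handle both correctly.
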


%We note that the assertion of Proposition \ref{lbl:J_star} in fact only requires (J1) and (J2).%  for everything we have done until this point, in fact a slightly weaker assumption for $j$ is required: it is enough that there exist $\gamma\in \RR$ and $\beta\in L^{1}(\om)$ such that there holds $0\le j(x,z) \le \beta(x)+\gamma |z|$ for a.e. $x\in\om$ and every $z\in \RR^{d}$.

\subsection{Explicit representation of $J^ {**}$}
Now we study $J^{**}$ and aim at providing an explicit representation. Note that since $J$ is convex, $J^{\ast\ast}$ is equal to
the lower semi-continuous envelope of $J$ with respect to both the strong and weak $L^{p}$-convergence. In other words, for every $u\in L^{p}(\om)$ we have
\begin{align}
J^{\ast\ast}(u)
&=\inf \left\{ \liminf_{n\rightarrow\infty} J(u_n) : u_n \in L^{p}(\Omega), \, u_n \to u \text{ in } L^{p}(\Omega) \right\}\nonumber\\
&=\inf \left\{ \liminf_{n\rightarrow\infty} J(u_n) : u_n \in L^{p}(\Omega), \, u_n \rightharpoonup u \text{ in } L^{p}(\Omega) \right\},\label{Jastast_lsc_Ldast}
\end{align}
where ``$\to$'' refers to convergence with respect to the strong topology and ``$\rightharpoonup$'' with respect to the weak topology.

In order to obtain an explicit representation of $J^{**}$, we will employ the representation of the $L^{1}$-lower semi-continuous envelope of $J$, denoted here by $\overline{J}$, as derived in \cite{Fusco08}. Note that for $u\in L^{p}(\om)$
 \begin{align}\label{lsc_L1}
 \overline{J}(u) 
 & :=\inf \left\{ \liminf_{n\rightarrow\infty} J(u_n) : u_n \in W^{1,1}(\Omega), \, u_n \rightarrow u \text{ in } L^1(\Omega) \right\} \\
 &=\inf \left\{ \liminf_{n\rightarrow\infty} J(u_n) : u_n \in W^{1,1}(\Omega), \, u_n \rightharpoonup u \text{ in } L^1(\Omega) \right\},
 \end{align}
 with the last equality again being true due to convexity of $J$. 
%Note that lower semicontinuity of $J$ with respect to weak or strong $L^{d^{\ast}}$ convergence is equivalent since $J$ is convex.
%In order to do so, we need to replace the hypothesis (J2) on $j:\Omega \times \RR^d \rightarrow [0,\infty)$ by a slightly stronger version assuming
%\begin{itemize}
%\item[(J2')] There exists $\gamma>0 $ such that
%\[ 0 \leq j(x,z) \leq \gamma (1+|z|).\]
%\end{itemize}

We  show that, under suitable coercivity assumptions on $j$, the functionals $J^{\ast\ast}$ and $\overline{J}$ coincide.

\begin{lem}\label{lbl:J1_J2}
Assume $p\in [1,\ds]$ and that for some $c>0$ we have $c|z|\le j(x,z)$ for every $z\in\RR^{d}$ and for almost every $x\in\om$.
Then, $J^{\ast\ast}(u)=\overline{J}(u)$ for all $u\in L^{p}(\Omega)$.
\end{lem}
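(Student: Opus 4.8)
The plan is to establish the two inequalities $\overline{J}(u)\le J^{\ast\ast}(u)$ and $J^{\ast\ast}(u)\le \overline{J}(u)$ separately. The first is a direct consequence of the continuous embedding $L^{p}(\Omega)\embed L^{1}(\Omega)$, valid since $\Omega$ is bounded and $p\ge 1$. To see it, I would take any sequence $(u_n)$ in $L^{p}(\Omega)$ with $u_n\to u$ in $L^{p}(\Omega)$; I may assume $\liminf_n J(u_n)<\infty$ and, passing to a subsequence realizing this liminf, that $u_n\in W^{1,1}(\Omega)$ for all $n$. Then $u_n\to u$ in $L^{1}(\Omega)$ as well, so the sequence is admissible in the definition \eqref{lsc_L1} of $\overline{J}$ and yields $\overline{J}(u)\le \liminf_n J(u_n)$. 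Taking the infimum over all such sequences gives $\overline{J}(u)\le J^{\ast\ast}(u)$.

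For the reverse inequality I may assume $\overline{J}(u)<\infty$ and choose a recovery sequence $(u_n)$ in $W^{1,1}(\Omega)$ with $u_n\to u$ in $L^{1}(\Omega)$ and $\liminf_n J(u_n)=\overline{J}(u)$; passing to a subsequence I may assume $\lim_n J(u_n)=\overline{J}(u)$, so that $J(u_n)\le \overline{J}(u)+1=:C$ for $n$ large. Here the coercivity hypothesis enters: from $c|z|\le j(x,z)$ we obtain $c\int_\Omega |\nabla u_n|\,dx\le J(u_n)\le C$, and together with the $L^{1}$-bound coming from $u_n\to u$ this shows that $(u_n)$ is bounded in $W^{1,1}(\Omega)$, hence in $\BV(\Omega)$. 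By the embedding $\BV(\Omega)\embed L^{d^{\ast}}(\Omega)$ the sequence is in particular bounded in $L^{d^{\ast}}(\Omega)$.

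Now I would distinguish two cases according to whether $p<d^{\ast}$ or $p=d^{\ast}$. If $p<d^{\ast}$, the embedding $\BV(\Omega)\embed L^{p}(\Omega)$ is compact, so a subsequence of $(u_n)$ converges strongly in $L^{p}(\Omega)$; by uniqueness of the $L^{1}$-limit this limit is $u$, so $u_n\to u$ in $L^{p}(\Omega)$ along this subsequence. If $p=d^{\ast}$, then since $d\ge 2$ we have $d^{\ast}=\frac{d}{d-1}\in(1,2]$, so that $L^{d^{\ast}}(\Omega)$ is reflexive, and a subsequence converges weakly in $L^{d^{\ast}}(\Omega)=L^{p}(\Omega)$, again necessarily to $u$. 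In either case the resulting subsequence $(u_{n_k})$ is admissible in one of the two (equal, by convexity of $J$) characterizations of $J^{\ast\ast}$ in \eqref{Jastast_lsc_Ldast}---the strong one when $p<d^{\ast}$ and the weak one when $p=d^{\ast}$---whence $J^{\ast\ast}(u)\le \liminf_k J(u_{n_k})=\overline{J}(u)$.

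The main obstacle is the borderline exponent $p=d^{\ast}$, where the $\BV$-embedding into $L^{p}$ is only continuous and not compact, so one obtains merely weak $L^{p}$-convergence of the recovery sequence. This is exactly where the convexity of $J$ is used, via the fact recorded in \eqref{Jastast_lsc_Ldast} that the weak and strong $L^{p}$-relaxations of $J$ agree; without it, weak convergence would not suffice to control $J^{\ast\ast}$. The coercivity bound $c|z|\le j(x,z)$ is what turns an energy bound into a $\BV$-bound and thereby supplies the (weak) compactness, while the restriction $p\le d^{\ast}$ is precisely what keeps the $\BV$-embedding into $L^{p}$ available.
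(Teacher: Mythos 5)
Your proof is correct and follows essentially the same route as the paper: the easy inequality from $L^{p}\embed L^{1}$ on the bounded domain, and for the converse the coercivity bound $c\vert Du_n\vert(\om)\le J(u_n)$ giving a uniform $\BV$-bound, hence boundedness in $L^{\ds}(\om)$ and a subsequence converging (weakly) in $L^{p}(\om)$ to $u$, which is admissible in \eqref{Jastast_lsc_Ldast}. The only cosmetic difference is your case split $p<\ds$ versus $p=\ds$; the paper handles both cases uniformly via weak $L^{p}$-convergence, which suffices since convexity makes the strong and weak relaxations coincide.
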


\begin{proof}
Since we assume $\Omega$ to be bounded, $L^ p$ convergence implies $L^ 1$ convergence. Also, $J(u_n) = +\infty$ for $u \in L^p(\Omega) \setminus W^{1,1}(\Omega)$ and, consequently, $\overline{J} \leq J^{**}$. Hence we are left to show $J^{**}(u) \leq \overline{J}(u)$ for all $u \in L^p (\Omega)$. To this aim, we first note that, due to the coercivity assumption on $j$, $c|Du|(\om)\le J(u)$ for all $u\in\bv(\om)$.
%Hence in case $u\notin \bv(\Omega)$. $\overline{J}(u)=J^{**}(u)=\infty$ due to the lower semicontinuity of total variation with respect to weak $L^{1}$ convergence.

Now take  $a=\lim_{n\to\infty} J(u_{n})$ with $u_{n}\in W^{1,1}(\om)\subset \bv(\om)$, $u_{n}\to u$ in $L^{1}(\om)$. Without loss of generality, we can assume $a<\infty$. From the continuous embedding of $\bv(\om)$ into $L^{d^*}(\om)$, we get for a generic constant $C>0$ and for some $K>0$
\[\|u_{n}\|_{L^{p}(\om)}\le C(\|u_{n}\|_{L^{1}(\om)}+|Du_{n}|(\om))\le C(\|u_{n}\|_{L^{1}(\om)}+J(u_{n}))<K<\infty.\]  Hence, $(u_{n})_{n\in\NN}$ also converges weakly (up to subsequences) in $L^{p}(\om)$, and thus also weakly in $L^{1}(\om)$. By uniqueness of the weak limit we get $a = \liminf _{i\rightarrow \infty} J(u_{n_i})$ with $u_{n_i} \rightharpoonup u$ in $L^p(\Omega)$ and the proof is complete.
\end{proof}
\begin{rem}\label{u_BV_finite_J}
Note that if the  coercivity assumption of Lemma \ref{lbl:J1_J2} holds, then due to the lower semi-continuity of total variation with respect to weak $L^{1}$-convergence we get $u\notin \bv(\Omega)$ if and only if $\overline{J}(u)=J^{**}(u)=+\infty$.
\end{rem}

We then get the following result, which is a direct consequence of \cite[Theorem 1.1]{Fusco08}.

\begin{prop}[Integral representation of $J^{\ast\ast}$]\label{lbl:relaxation}
% Let $j:\Omega \times \RR^d \rightarrow [0,\infty)$ satisfy (J1),(J2) 
 %and define $J^{**}$ as the biconjugate of $J$ as in \eqref{eq:original_reg_func} in $L^p(\Omega)$ with $p \in [1,\ds]$.  
 Assume that $p \in [1,\ds]$ and one of the following two assertions holds true:
 
 \begin{enumerate} \setlength\itemsep{0.5em}
 \item $j(x,z) = \alpha(x)b(z)$ with $\alpha\in \BV(\Omega)$ and $b$ being a convex function such that (J1)--(J3) hold for $j$.
 \item There exists a constant $c>0$ such that for every $z\in\RR^{d}$ and for almost every $x\in\om$, $c|z|\leq j(x,z)$ and also $j(\cdot,z)\in \BV(\Omega)$.
 \end{enumerate}
Then, for any $u\in\bv(\om)$ we have
\begin{equation}\label{eq:lsc_relaxtion_of_j}
J^{**} (u) = \int _\Omega j(x,\nabla u(x))\,dx + \int _\Omega j^-(x,\sigma_{D^cu}) \,d |D^cu| + \int _{J_u \cap \Omega} ((u^+(x) - u^-(x) )j^- (x,\sigma _{D^ju})\,d \mathcal{H}^{d-1} .
\end{equation}
%where $J:L^{p}(\Omega) \rightarrow [0,\infty]$ with $p\in [1,\ds]$.
\end{prop}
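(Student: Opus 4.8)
The plan is to recognize the right-hand side of \eqref{eq:lsc_relaxtion_of_j}, which I will abbreviate by $R(u)$, as precisely the integral representation of the $L^1$-lower semicontinuous envelope $\overline{J}$ supplied by \cite[Theorem 1.1]{Fusco08}, and then to identify $\overline{J}$ with $J^{**}$. Accordingly the argument splits into two parts: first, verifying that in each of the cases (i) and (ii) the standing hypotheses (J1)--(J3) together with the case-specific regularity of $j$ in the $x$-variable place us within the scope of \cite{Fusco08}, so that $\overline{J}(u) = R(u)$ for every $u \in \BV(\Omega)$; and second, establishing the identity $J^{**} = \overline{J}$.

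For case (ii) the second part is immediate. The coercivity bound $c|z| \le j(x,z)$ is exactly the hypothesis of Lemma \ref{lbl:J1_J2}, which for $p \in [1,\ds]$ gives $J^{**}(u) = \overline{J}(u)$ for all $u \in L^p(\Omega)$; combined with $\overline{J} = R$ this yields \eqref{eq:lsc_relaxtion_of_j}. The effort here is bookkeeping: checking that $j$ convex, positively $1$-homogeneous and even in $z$, of linear growth, coercive, with $j(\cdot,z) \in \BV(\Omega)$, matches the integrand hypotheses of \cite[Theorem 1.1]{Fusco08}, and that the singular integrand occurring there is the approximate lower limit $j^-$.

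Case (i) is more delicate, since the weight $\alpha$ may vanish, so that $j(x,z)=\alpha(x)b(z)$ need not be coercive and Lemma \ref{lbl:J1_J2} does not apply directly. Here I would argue by exact regularization. On the one hand, since $\Omega$ is bounded, $L^p$-convergence implies $L^1$-convergence, whence $\overline{J} \le J^{**}$ holds unconditionally (as already observed in the proof of Lemma \ref{lbl:J1_J2}); together with $\overline{J} = R$ from \cite{Fusco08} this gives the inequality $R(u) \le J^{**}(u)$. For the reverse inequality I would set $j_\varepsilon(x,z) := j(x,z) + \varepsilon|z|$ for $\varepsilon > 0$. Then $j_\varepsilon$ satisfies (J1)--(J3), is coercive with constant $\varepsilon$, and obeys $j_\varepsilon(\cdot,z) = \alpha(\cdot)b(z) + \varepsilon|z| \in \BV(\Omega)$, so $j_\varepsilon$ falls under case (ii). Writing $J_\varepsilon, J_\varepsilon^{**}$ for the associated functionals, case (ii) applied to $j_\varepsilon$ gives a representation $J_\varepsilon^{**} = R_\varepsilon$. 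Since $|z|$ is independent of $x$ one has $j_\varepsilon^- = j^- + \varepsilon|\cdot|$, and since the polar densities satisfy $|\sigma_{D^cu}| = |\sigma_{D^ju}| = 1$, the $\varepsilon$-contributions collapse to $\varepsilon|Du|(\Omega)$, so that $J_\varepsilon^{**}(u) = R(u) + \varepsilon|Du|(\Omega)$. As $j \le j_\varepsilon$ forces $J \le J_\varepsilon$ and hence $J^{**} \le J_\varepsilon^{**}$, letting $\varepsilon \downarrow 0$ and using $u \in \BV(\Omega)$ (so $|Du|(\Omega) < \infty$) yields $J^{**}(u) \le R(u)$. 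Combined with the opposite inequality from the first step, equality follows.

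The main obstacle I anticipate is the careful verification that cases (i) and (ii) genuinely meet the measurability and structural assumptions of \cite[Theorem 1.1]{Fusco08}, and in particular the identification of the singular integrand in that theorem with the approximate lower limit $j^-$ appearing in \eqref{eq:lsc_relaxtion_of_j}; for the product integrand this should reduce to $j^- = \alpha^- b$, matching the weighted-TV formula quoted in the introduction, while evenness (J3) is what makes the jump integrand independent of the chosen orientation of the jump normal. The remaining analytic points---the identity $j_\varepsilon^- = j^- + \varepsilon|\cdot|$ and the passage $\varepsilon \downarrow 0$---are elementary once the representation for the coercive case is in hand.
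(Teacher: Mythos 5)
Your treatment of case (ii) coincides with the paper's: both invoke \cite[Theorem 1.1]{Fusco08} to identify the right-hand side $\mathcal{J}$ with the $L^1$-relaxation $\overline{J}$, and Lemma \ref{lbl:J1_J2} to get $J^{**}=\overline{J}$ under coercivity. For case (i), however, you take a genuinely different route. The paper establishes $\mathcal{J}\le J^{**}$ from the $L^1$-lower semicontinuity of $\mathcal{J}$ (\cite[Theorem 3.1]{Fusco08}), and for the reverse inequality it reaches into the proof of \cite[Theorem 4.1]{Fusco08}: it bounds $\liminf_h J(u\ast\phi_h,A)\le\mathcal{J}(u)$ on compactly contained $A$, extends $A\mapsto\overline{J}^{\ds}(u,A)$ to a Borel measure via \cite[Theorem 4.1]{DalMaso93}, and exhausts $\Omega$ by sets $\Omega_{\epsilon_i}$ whose boundaries are null for that measure. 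Your exact-regularization argument --- perturbing to $j_\varepsilon=j+\varepsilon|\cdot|$, which satisfies (J1)--(J3), is coercive, and has $j_\varepsilon(\cdot,z)\in\BV(\Omega)$, so that the already-proved case (ii) yields $J_\varepsilon^{**}(u)=\mathcal{J}(u)+\varepsilon|Du|(\Omega)$ (the $\varepsilon$-terms collapse because the polar densities $\sigma_{D^cu},\sigma_{D^ju}$ are unit vectors), followed by $J^{**}\le J_\varepsilon^{**}$ from monotonicity of biconjugation and $\varepsilon\downarrow 0$ using $|Du|(\Omega)<\infty$ --- bypasses the mollification and measure-exhaustion machinery entirely. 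It is shorter and self-contained modulo case (ii); its only cost is an additional appeal to the Fusco08 representation for each $j_\varepsilon$, which is harmless since those integrands verifiably satisfy hypothesis (ii). Both arguments are correct; the logical structure (prove (ii) first, then reduce (i) to it) is sound and not circular.
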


\begin{proof}
Denote by $\mathcal{J}$  the right-hand side of equation \eqref{eq:lsc_relaxtion_of_j}.
If one of the two assumptions is satisfied, then it follows from \cite[Theorem 1.1]{Fusco08} that $\overline{J}(u)=\mathcal{J}(u)$ for every $u\in \BV(\Omega)$. 
It hence remains to show that $J^{**}(u) = \overline{J}(u)$ for any $u\in \BV(\Omega)$. 
In case $(ii)$ is satisfied, this is the assertion of Lemma \ref{lbl:J1_J2}. Assume now that $(i)$ holds true. In that case we will show directly that $J^{\ast\ast}(u)=\mathcal{J}(u)$ for every $u\in\BV(\Omega)$. It follows from \cite[Theorem 3.1]{Fusco08} that $\mathcal{J}$ is lower semi-continuous with respect to $L^1$ convergence. Consequently it is also lower semi-continuous with respect to weak $L^p$ convergence and hence, for all $u$ and $(u_n)_{n\in\NN}$ in $ W^{1,1}(\Omega)$ with $u_n \rightharpoonup u$ in $L^p$ we get
\[\mathcal{J}(u) \leq \liminf_{n\to\infty} J(u_n) .\]
This means that $\mathcal{J}(u)$ is a lower bound for the set 
\[ 
\left\{ \liminf_{n\rightarrow\infty} J(u_n) : u_n \in W^{1,1}(\Omega), \, u_n \rightharpoonup u \text{ in } L^{p}(\Omega) \right\}
\]
and since, by definition of the lower semi-continuous relaxation, $J^{**}(u)$ is the infimum of this set, we get
\[ \mathcal{J}(u) \leq J^{**}(u).\]
Furthermore, in the proof of \cite[Theorem 4.1]{Fusco08} it is shown, with $u_h = u\ast \phi_h$ being a standard mollification of $u\in \BV(\Omega)$ and any $A\subset \subset \Omega$ with $|\Du|(\partial A) = 0$, that
\[ \liminf_{h\rightarrow \infty}J(u_h,A) \leq \mathcal{J}(u), \]
	where, for any $A \subset \Omega$ open, $J(u,A): = \int _A j(x,\nabla u(x))\,dx $ if $u \in W^{1,1}(\Omega)$ and $J(u,A) = +\infty $ otherwise. Denoting, for fixed $A \subset \Omega$, $\overline{J}^{\ds}(\cdot,A)$ the lower semi-continuous  relaxation of $J(\cdot,A)$ in $L^\ds$, we get from convergence of $u_h$ to $u$ in $L^\ds(\Omega)$ as $h\to+\infty$ (which holds, since by embedding $u \in L^\ds(\Omega)$) that
\[ \overline{J}^{\ds}(u,A) \leq \liminf_{h\rightarrow \infty}J(u_h,A) \leq \mathcal{J}(u). \]
Now by \cite[Theorem 4.1]{DalMaso93}, the function $A \mapsto \overline{J}^{\ds}(u,A)$ can be extended to  a non-negative Borel measure. We define the uncountable family of open sets $(\Omega_\epsilon)_{\epsilon > 0}$ by $\Omega_\epsilon := \{x \in \Omega : d(x,\partial\Omega) > \epsilon\}$. Then $\Omega_\epsilon \subset \subset \Omega $ and $\partial\Omega_\epsilon \cap \partial\Omega_{\epsilon'} = \emptyset$. By sigma additivity  and since $\overline{J}^{\ds}(u,\Omega) < +\infty$ we get that, for any $n \in \N$, the set
\[ \{ \epsilon > 0 : \overline{J}^{\ds}(u,\partial\Omega_\epsilon) > 1/n \} \]
is finite. Hence, the set $\{ \epsilon > 0 : \overline{J}^{\ds}(u,\partial\Omega_\epsilon) > 0 \}$ is at most countable and we can extract a sequence $(\Omega_{\epsilon_i})_{i\in\NN}$ such that $\overline{J}^{\ds}(u,\partial\Omega_{\epsilon_i}) = 0$, %$\Omega_{\epsilon_j} \subset \Omega _{\epsilon_i}$ for $\epsilon_j\leq \epsilon_i$ 
$(\epsilon_i)_{i\in\NN}$ is monotonically decreasing
and $\bigcup _i \Omega _{\epsilon_i} = \Omega$. Hence we conclude (by $p \leq \ds$ for the left-most inequality below)
\[ J^{**}(u) \leq \overline{J}^{\ds}(u, \Omega ) = \lim _{i\rightarrow \infty} \overline{J}^{\ds}(u,\Omega _{\epsilon_i}) \leq \mathcal{J}(u) \]
and the proof is complete.
\end{proof}

%
%The following proposition follows immediately from  \cite[Theorem 1.1]{Fusco08} in combination with Proposition \ref{lbl:J1_J2}  and provides an explicit form of $J^{\ast\ast}(u)$ for $u\in\bv(\om)$:
%
%\begin{prop}[Integral representation of $J^{\ast\ast}$]\label{lbl:relaxation}
% Let $j:\Omega \times \RR^d \rightarrow [0,\infty)$ satisfy (J1),(J2') and also the coercivity assumption $c|z|\le j(x,z)$, $c>0$. Further, assume that for any $z\in \RR^d$, $j (\cdot,z), (j^{-})^{\infty}(\cdot,z) \in \bv(\Omega)$
% % \todo{$ j (\cdot,z) \in \bv$ is not assumed in \cite{Fusco08}, but necessary for $j^- = j$ a.e. below. Does it follow from $j (\cdot,z) \in \bv$?}
%and that $j(x,z)$ can be splitted as a product of two functions depending on $x$ and $z$, respectively. Then, for any $u\in\bv(\om)$
%\begin{equation}\label{eq:lsc_relaxtion_of_j}
%J^{**} (u) = \int _\Omega j(x,\nabla u) + \int _\Omega (j^-)^\infty(x,\sigma_{D^cu}) \,d |D^cu| + \int _{J_u \cap \Omega} ((u^+(x) - u^-(x) )(j^-)^\infty (x,\sigma _{D^ju})\,d \mathcal{H}^{d-1} ,
%\end{equation}
%\end{prop}
%
%\begin{proof}
%Given that by Proposition \ref{lbl:J1_J2} the  lower semi-continuous envelopes of $J$ with respect to $L^{1}$ and $L^{d^{\ast}}$ convergence coincide, the result follows from \cite[Theorem 1.1]{Fusco08}.
%\end{proof}

\begin{rem}\label{lbl:remark_relaxation}
 In the case $j(x,z) = \alpha(x)|z| $ with $0\leq \alpha(x) \leq C$ and $\alpha \in \bv(\Omega)$, the assumptions of the above proposition are fulfilled. Using that, in this case $(j^-) = \alpha^- |\cdot |$, we get that 
\begin{align*}
J^{**}(u) &= \int _\Omega \alpha^- |\nabla u(x)|\,dx + \int _\Omega \alpha ^- |\sigma_{D^cu}| \,d |D^cu| + \int _{J_u \cap \Omega} ((u^+ - u^- )\alpha^- |\sigma _{D^ju}|\,d \mathcal{H}^{d-1} \\
&=  \int_{\om} \alpha ^- \,d |Du|.
\end{align*}

\end{rem}

We provide further remarks concerning the case $j(x,z)=\alpha(x)|z|$ in Section \ref{sec:refine_weightedTV} below.

\subsection{Subdifferential characterization}
From Proposition \ref{lbl:J_star} and Corollary \ref{lbl:J_star_alpha} we directly obtain an integral characterization of the subdifferential for a wide class of structural total variation type functionals. The corresponding result is established in Proposition \ref{lbl:subdiff_Jstarstar} below. Also, we refer to \cite{Anzellotti86, Holler12_subdif,Chambolle_Goldman_12} for more elaborate results in that direction.

\begin{prop}\label{lbl:subdiff_Jstarstar}
Let $p \in (1,\infty)$. Then, for $u \in L^ p(\Omega)$ we have  $u^{\ast}\in \partial J^{\ast\ast}(u)\subset L^{q}(\om)$ if and only if 
\begin{enumerate}
\item $J^{\ast\ast}(u)<+\infty$, and
\item there exists a vector field $g\in W_{0}^{q}(\di;\om)\cap L^{\infty}(\om,\RR^{d})$ such that
\[u^{\ast}=\di g,\qquad  J^{\ast\ast}(u)=\int_{\om} u\, \di g \,dx,\qquad j^{\circ}(x,g(x))\le 1\quad \text{ for a.e. }x\in\om.\]
\end{enumerate}
%In particular, if $p \in (1,\ds]$ and one of the two conditions of Proposition \ref{lbl:relaxation} holds, then in case $u \in \BV(\Omega)$, $J^ {**}$ in the integral characterization as above is given as in Proposition \ref{lbl:relaxation}.
\end{prop}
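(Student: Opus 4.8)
The plan is to derive the characterization purely from convex duality, using the Fenchel--Young equality together with the computation of $J^*$ already obtained in Corollary~\ref{lbl:J_star_alpha}. Since $p\in(1,\infty)$, the space $L^p(\om)$ is reflexive with dual $L^q(\om)$, so that $\partial J^{**}(u)\subset L^q(\om)$ as asserted. First I would record that $J^{**}$ is proper, convex and lower semi-continuous: it is by definition the lower semi-continuous convex envelope of $J$, and $J$ is proper since $J(0)=\int_\om j(x,0)\,dx=0$ by positive $1$-homogeneity, while $J\ge 0$ forces $0\le J^{**}\le J$ and hence $J^{**}(0)=0$. For such a functional the standard Fenchel--Young characterization of the subdifferential (see \cite{Ekeland}) states that, for $u\in L^p(\om)$ and $u^*\in L^q(\om)$,
\[ u^*\in\partial J^{**}(u)\quad\Longleftrightarrow\quad J^{**}(u)+(J^{**})^*(u^*)=(u^*,u). \]

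Next I would replace $(J^{**})^*$ by $J^*$. Since conjugation is idempotent after the first application, $(J^{**})^*=J^{***}=J^*$, and Corollary~\ref{lbl:J_star_alpha} yields $J^*(u^*)=\mathcal{I}_{\di(Q)}(u^*)$, with $Q$ the set appearing in the statement. Substituting into the Fenchel--Young identity, the right-hand side $(u^*,u)$ is finite, being the pairing of an $L^q$ and an $L^p$ function, and both summands on the left are non-negative; hence equality can hold only if each of them is finite. This immediately forces $J^{**}(u)<+\infty$, which is assertion~(1), and $\mathcal{I}_{\di(Q)}(u^*)=0$, i.e.\ $u^*\in\di(Q)$. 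The latter means precisely that there exists $g\in W_0^q(\di;\om)\cap L^\infty(\om,\RR^d)$ with $j^\circ(x,g(x))\le 1$ for a.e.\ $x\in\om$ and $u^*=\di g$. With $J^*(u^*)=0$, the remaining content of the Fenchel--Young equality collapses to $J^{**}(u)=(u^*,u)=\int_\om u\,\di g\,dx$, which is the second relation in assertion~(2). For the converse direction I would simply observe that conditions (1) and (2) reconstruct exactly this equality, so that $u^*\in\partial J^{**}(u)$.

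The one point that genuinely requires care is the orientation of the divergence: the definition of $J^*$ in Proposition~\ref{lbl:J_star} is phrased through the constraint $-\di g=u^*$, whereas assertion~(2) asserts $u^*=\di g$. These are reconciled by the evenness hypothesis (J3), which makes the admissible set symmetric, $g\in Q\iff -g\in Q$, and hence $\di(Q)=-\di(Q)$; this symmetry is exactly what is already used when passing to $\mathcal{I}_{\di(Q)}$ in Corollary~\ref{lbl:J_star_alpha}, so no additional work is needed. Apart from this sign bookkeeping, the argument is a direct unwinding of the Fenchel--Young equality, the analytic substance having been absorbed into the identification of $J^*$ in Proposition~\ref{lbl:J_star}.
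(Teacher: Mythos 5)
Your argument is correct and is essentially identical to the paper's proof: both reduce the claim to the Fenchel--Young equality $J^{**}(u)+J^{***}(u^*)=(u^*,u)$, identify $J^{***}=J^*=\mathcal{I}_{\di(Q)}$ via Corollary~\ref{lbl:J_star_alpha}, and read off the two conditions. Your extra remarks on properness of $J^{**}$ and on the sign of the divergence (resolved by the symmetry $Q=-Q$ from (J3)) are accurate details that the paper leaves implicit.
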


\begin{proof}
Let $u\in L^{p}(\Omega)$. We have (see, e.g., \cite{Ekeland})
\begin{align*}
u^{\ast} \in \partial J^{\ast\ast}(u) 
&\iff J^{\ast\ast}(u)+J^{\ast\ast\ast}(u^{\ast})=\int_{\om} u^{\ast}u\,dx\\
&\iff J^{\ast\ast}(u)+J^{\ast}(u^{\ast})=\int_{\om} u^{\ast}u\,dx\\
&\iff J^{\ast\ast}(u)+ \mathcal{I}_{\mathrm{div}(Q)}(u^{\ast})=\int_{\om} u^{\ast}u\,dx,
\end{align*}
where $Q$ is defined in Corollary \ref{lbl:J_star_alpha} as
\[ Q = \left\{ g \in W^q_0(\di;\om) \cap L^\infty(\Omega,\RR^d) :\, j^\circ(x,g(x)) \leq 1 \text{ for a.e. } x\in \Omega \right\} .\]
Hence the result follows immediately.
\end{proof}

Recall that if $j$ satisfies the coercivity assumption $c|z|\le j(x,z)$, then Remark \ref{u_BV_finite_J} implies that $J^{\ast\ast}(u)<+\infty$ is equivalent to $u\in\bv(\om)$.
%As a result, provided the weight function is bounded away from zero, we obtain as a corollary, a subdifferential characterisation for the weighted $\tv$ functional with $\bv$ weight:

\section{Some refinements for weighted TV}\label{sec:refine_weightedTV}

For the special case $j(x,z)=\alpha(x)|z|$ we next investigate alternative dual definitions of the structural total variation functional. This will provide some density results for pointwise bounded $W_0^d(\dive;\Omega)$-functions, which are of interest in a variety of applications \cite{sing_mol, Hint_Rau_density, hint_rau, hint_rau_ros}.

Given $u\in L^p(\om)$ with $p \in (1,\ds] $, consider the following two extended-valued weighted $\tv$-functionals:
\begin{align}
\tv_{\alpha}^{W}(u)&:= \sup\left \{ \int_{\om}u\, \di g\,dx : g\in W_{0}^{q}(\di;\om),\; |g(x)|\le \alpha(x),\text{ for a.e. }x\in\om \right \},\label{wTV_g}\\
\tv_{\alpha}^{C}(u)&:= \sup\left \{ \int_{\om}u\, \di \phi\,dx : \phi\in C_{c}^{\infty}(\om,\RR^{d}),\; |\phi(x)|\le \alpha(x),\; \forall x\in\om \right \},\label{wTV_phi}
\end{align}
%Note that if $\alpha$ is continuous, then ``a.e.'' can be substituted with ``every'' $x\in\om$. 
Recall that $\tv_\alpha^W  = J^{**}$ by Corollary \ref{lbl:J_star_alpha}. 

%Let us focus first on the case where the weight function $\alpha$ is continuous. 
%The following density result will be useful to us:

We commence by stating a preparatory result whose proof can be found in \cite{sing_mol}.
\begin{lem}\label{lbl:smooth_below}
Let $\phi\in C_{c}(\Omega,\RR^{d})$. Then for every $\epsilon>0$ there exists a function $\phi_{\epsilon}\in C_{c}^{\infty}(\Omega, \RR^{d})$ such that
\[\|\phi-\phi_{\epsilon}\|_{\infty}<\epsilon\quad \text{and}\quad |\phi_{\epsilon}(x)|\le |\phi(x)|\quad \text{for all }x\in\Omega.\]
\end{lem}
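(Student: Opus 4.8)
The plan is to combine a soft-thresholding (shrinkage) preprocessing step with a standard mollification. The naive approach of directly mollifying $\phi$ fails: mollification enlarges the support and, at points $x$ where $|\phi(x)|$ is small (in particular near $\partial(\supp\phi)$), the mollified field typically has strictly larger modulus than $|\phi(x)|$, so the pointwise domination $|\phi_\epsilon(x)|\le|\phi(x)|$ is violated. To create room for this domination I would first shrink $\phi$ by a fixed amount $\eta>0$ using the operator $\shrink_\eta(z):=\max(|z|-\eta,0)\,z/|z|$ for $z\neq0$ and $\shrink_\eta(0):=0$, setting $\psi:=\shrink_\eta\circ\phi$.

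Next I would record the properties of $\psi$ that drive the argument. Since $\shrink_\eta$ is continuous, $1$-Lipschitz and vanishes on $\{|z|\le\eta\}$, the field $\psi$ is continuous, compactly supported in $K:=\supp\phi\subset\subset\Omega$, and inherits the modulus of continuity $\omega$ of $\phi$ (with $\omega(\sigma)\to0$ as $\sigma\to0$, by uniform continuity). Moreover $|\psi(x)|=\max(|\phi(x)|-\eta,0)$, so that $|\psi(x)|\le|\phi(x)|$ and $\|\psi-\phi\|_\infty\le\eta$. The decisive gain is this quantitative bound $|\psi(x)|\le\max(|\phi(x)|-\eta,0)$: the shrinkage builds in a uniform buffer of size $\eta$ that the subsequent mollification is allowed to ``spend''.

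Then I would mollify, defining $\phi_\epsilon:=\psi\ast\rho_\sigma$ with $\rho_\sigma$ a standard nonnegative mollifier supported in $\overline{B(0,\sigma)}$ and $\int\rho_\sigma=1$. Choosing $\sigma<d(K,\partial\Omega)$ guarantees $\phi_\epsilon\in C_c^\infty(\Omega,\RR^d)$. For the domination I would estimate, for $x\in\Omega$,
\[ |\phi_\epsilon(x)|\le\int\rho_\sigma(y)\,|\psi(x-y)|\,dy\le\sup_{|y|\le\sigma}\max\bigl(|\phi(x-y)|-\eta,\,0\bigr). \]
By uniform continuity $|\phi(x-y)|\le|\phi(x)|+\omega(\sigma)$ for $|y|\le\sigma$, so if $\sigma$ is taken small enough that $\omega(\sigma)\le\eta$, the right-hand side is bounded by $\max(|\phi(x)|,0)=|\phi(x)|$, giving exactly $|\phi_\epsilon(x)|\le|\phi(x)|$. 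For the uniform closeness I would use $\|\phi_\epsilon-\phi\|_\infty\le\|\psi\ast\rho_\sigma-\psi\|_\infty+\|\psi-\phi\|_\infty$, bound the second term by $\eta$, and shrink $\sigma$ further so that the first (mollification) term is also below $\eta$; fixing $\eta<\epsilon/2$ at the outset then yields $\|\phi_\epsilon-\phi\|_\infty<\epsilon$.

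The main obstacle is precisely the exact pointwise domination, which no single mollification can achieve on its own; the entire role of the shrinkage step is to trade a controlled uniform error $\eta$ for a uniform margin that absorbs the oscillation $\omega(\sigma)$ of $\phi$ over balls of radius $\sigma$. All estimates rely only on the triangle inequality for $|\cdot|$ and on $\rho_\sigma$ being a probability density, so the argument is insensitive to the particular (Euclidean) norm in use.
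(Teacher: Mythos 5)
Your proof is correct. Note that the paper itself does not prove this lemma: it only cites an external reference for it, so there is no in-paper argument to compare against. Your shrink-then-mollify construction is a complete, self-contained proof: the key estimate $|\psi(x)|=\max(|\phi(x)|-\eta,0)$ gives exactly the uniform buffer needed so that $\sup_{|y|\le\sigma}\max(|\phi(x-y)|-\eta,0)\le|\phi(x)|$ once $\omega(\sigma)\le\eta$, and this also correctly forces $\phi_\epsilon\equiv 0$ on the $\sigma$-neighbourhood of the complement of $\supp\phi$ where naive mollification would fail. Two harmless remarks: the $1$-Lipschitz property of the shrinkage is not actually needed (uniform continuity of $\shrink_\eta$ on the compact range of $\phi$ already gives a modulus of continuity for $\psi$, which is all the mollification error estimate uses); and one should extend $\phi$ and $\psi$ by zero to $\RR^d$ before convolving and before invoking the modulus of continuity at points $x-y\notin\Omega$, which is legitimate precisely because the supports are compactly contained in $\Omega$.
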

%For the proof of Lemma \ref{lbl:smooth_below} we refer to \cite{sing_mol}. 
Using this density result, the following result can be shown for a continuous weight function $\alpha$.
\begin{prop}\label{lbl:dual_weighted_TV}
 Let $\alpha\in C(\overline{\om})$ with $\alpha\ge 0$ and $u\in\bv(\om)$. Then
 \begin{equation}\label{dual_weighted_TV}
 \tv_{\alpha}^{C}(u)=\int_{\om}\alpha\,d|Du|.
 \end{equation}
\end{prop}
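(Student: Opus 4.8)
The plan is to prove the two inequalities separately, exploiting the polar decomposition $Du = \sigma_{Du}\,|Du|$ with $|\sigma_{Du}| = 1$ for $|Du|$-a.e.\ $x$, so that for every $\phi \in C_{c}^{\infty}(\om,\RR^{d})$ the definition of the distributional derivative gives
\[
\int_{\om} u\,\di\phi\,dx = -\int_{\om} \phi\cdot\sigma_{Du}\,d|Du|.
\]
The inequality ``$\le$'' is then immediate: for admissible $\phi$, i.e.\ $|\phi(x)|\le\alpha(x)$, the Cauchy--Schwarz inequality together with $|\sigma_{Du}|=1$ yields $-\int_{\om}\phi\cdot\sigma_{Du}\,d|Du| \le \int_{\om}|\phi|\,d|Du| \le \int_{\om}\alpha\,d|Du|$, and taking the supremum gives $\tv_{\alpha}^{C}(u)\le\int_{\om}\alpha\,d|Du|$.

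For the reverse inequality the idea is to use as near-optimal test field $\phi\approx-\alpha\,\sigma_{Du}$, for which $-\phi\cdot\sigma_{Du}=\alpha$. First I would smooth out the merely $|Du|$-measurable field $\sigma_{Du}$: since $|Du|$ is a finite Radon measure on the locally compact space $\om$, the space $C_{c}(\om,\RR^{d})$ is dense in $L^{1}(\om,|Du|;\RR^{d})$, so there are $\psi_{n}\in C_{c}(\om,\RR^{d})$ with $\psi_{n}\to\sigma_{Du}$ in $L^{1}(|Du|)$. Composing with the $1$-Lipschitz projection onto the closed unit ball and using $|\sigma_{Du}|=1$, I may additionally assume $|\psi_{n}(x)|\le 1$ for all $x$ without spoiling the convergence. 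Setting $\tilde\phi_{n}:=-\alpha\,\psi_{n}\in C_{c}(\om,\RR^{d})$, we then have $|\tilde\phi_{n}(x)| = \alpha(x)|\psi_{n}(x)| \le \alpha(x)$.

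The crucial passage from continuous to smooth test fields, while preserving the pointwise constraint, is supplied by Lemma \ref{lbl:smooth_below}: for each $n$ and $\epsilon>0$ it produces $\phi_{n,\epsilon}\in C_{c}^{\infty}(\om,\RR^{d})$ with $\|\tilde\phi_{n}-\phi_{n,\epsilon}\|_{\infty}<\epsilon$ and $|\phi_{n,\epsilon}(x)|\le|\tilde\phi_{n}(x)|\le\alpha(x)$, so that $\phi_{n,\epsilon}$ is admissible in \eqref{wTV_phi}. It then remains to estimate
\[
\int_{\om} u\,\di\phi_{n,\epsilon}\,dx = -\int_{\om}\phi_{n,\epsilon}\cdot\sigma_{Du}\,d|Du|
\]
from below. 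Splitting off $\tilde\phi_{n}$ and using $\|\phi_{n,\epsilon}-\tilde\phi_{n}\|_{\infty}<\epsilon$ together with $|\sigma_{Du}|=1$, the error is bounded by $\epsilon\,|Du|(\om)$; the main term equals $-\int_{\om}\tilde\phi_{n}\cdot\sigma_{Du}\,d|Du| = \int_{\om}\alpha\,\psi_{n}\cdot\sigma_{Du}\,d|Du|$, which differs from $\int_{\om}\alpha\,d|Du| = \int_{\om}\alpha\,|\sigma_{Du}|^{2}\,d|Du|$ by at most $\|\alpha\|_{\infty}\,\|\psi_{n}-\sigma_{Du}\|_{L^{1}(|Du|)}$. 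Letting first $\epsilon\to 0$ and then $n\to\infty$ yields $\tv_{\alpha}^{C}(u)\ge\int_{\om}\alpha\,d|Du|$, completing the proof.

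I expect the main obstacle to be precisely this lower bound: producing admissible \emph{smooth} fields that nearly realize the supremum. The delicate point is that $\sigma_{Du}$ is only $|Du|$-measurable and so must be regularized, yet the pointwise constraint $|\phi|\le\alpha$ has to survive every approximation step. The two ingredients that make this work are the $L^{1}(|Du|)$-density of compactly supported continuous fields (together with the norm-nonexpansive ball projection, which keeps the magnitude below one), and, decisively, Lemma \ref{lbl:smooth_below}, whose conclusion $|\phi_{\epsilon}|\le|\phi|$ is exactly what preserves admissibility when passing from $C_{c}$ to $C_{c}^{\infty}$.
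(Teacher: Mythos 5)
Your proof is correct and follows essentially the same route as the paper: both arguments reduce the statement to a duality formula over \emph{continuous} compactly supported test fields and then invoke Lemma \ref{lbl:smooth_below} to pass to smooth fields while preserving the constraint $|\phi|\le\alpha$. The only difference is that where the paper cites \cite[Prop.~1.47]{AmbrosioBV} for the continuous-test-field representation, you re-derive it by hand via the polar decomposition $Du=\sigma_{Du}|Du|$, the density of $C_c(\om,\RR^d)$ in $L^1(\om,|Du|;\RR^d)$ and the nonexpansive projection onto the unit ball --- a self-contained but equivalent step.
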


\begin{proof}
From \cite[Prop. 1.47]{AmbrosioBV} we have that
\begin{align}
\int_{\om}\alpha\,d|Du|	&=\sup \left \{\sum_{i=1}^{d} \int_{\om} \phi_{i}\alpha\,dD_{i}u : \phi\in C_{c}(\om,\RR^{d}),\;|\phi(x)|\le 1 ,\;\forall x\in \om\right \}\notag\\
								      &= \sup \left \{\sum_{i=1}^{d} \int_{\om} \phi_{i}\,dD_{i}u : \phi\in C_{c}(\om,\RR^{d}),\;|\phi(x)|\le \alpha(x),\;\forall x\in \om \right \}\label{prop147_a}\\
								      &\le \sup \left \{\int_{\om}|\phi\,|d|Du| :  \phi\in C_{c}(\om,\RR^{d}),\;|\phi(x)|\le \alpha(x),\;\forall x\in \om\right \} \notag\\
								      &\le \int_{\om}\alpha\,d|Du|.\notag
\end{align}
The proof will be completed by showing that \eqref{prop147_a} is equal to
\begin{equation}\label{prop147_a_diff}
 \sup \left \{\sum_{i=1}^{d} \int_{\om} \phi_{i}\,dD_{i}u : \phi\in C_{c}^{\infty}(\om,\RR^{d}),\;|\phi(x)|\le \alpha(x),\;\forall x\in \om \right \}.
\end{equation}
Indeed, if $\phi\in C_{c}^{\infty}(\om,\RR^{d})$, then $\sum_{i=1}^{d} \int_{\om} \phi_{i}\,dD_{i}u=-\int_{\om}u\,\mathrm{div}\phi\,dx$ and thus  \eqref{prop147_a_diff} is equal to the right-hand side of \eqref{dual_weighted_TV}. Lemma \ref{lbl:smooth_below} now allows to approximate any term in \eqref{prop147_a} by a term in \eqref{prop147_a_diff} satisfying the pointwise constraint,
which completes the proof.
\end{proof}

%\begin{rem}\label{phi_in_support_a}
%We would like to provide a further remark regarding alternative definitions of weighted TV. For readability reasons and since it is not directly related to the purpose of the present work, we mention the following result without proof.
% In the case of $\alpha\in C(\overline{\om})$, $\alpha\ge 0$ it is possible in many cases to obtain an equivalent definition of $\tv_{\alpha}^{C}(u)$ by restricting the support of the smooth test functions inside the support of $\alpha$. In particular, define 
%\begin{align*}
%\om_{\alpha,+}&=\{x\in\om \st \alpha(x)>0\},\\
%\om_{\alpha,0}&=\{x\in\om \st \alpha(x)=0\},
%\end{align*}
%and
%\[ \tv_{\alpha,+}^{C}(u):=\sup\left \{ \int_{\om}u\, \di \phi\,dx \st\phi\in C_{c}^{\infty}(\om_{\alpha,+},\RR^{d}),\; |\phi(x)|\le \alpha(x), \;\forall x\in\om_{\alpha,+} \right \}.\]
%Then one can show that if $u\in C^{1}(\om)$ or $u\in\bv(\om)$ then 
%\[\tv_{\alpha,+}^{C}(u)=\tv_{\alpha}^{C}(u).\]
%Moreover one can show that under some mild regularity assumptions on $\partial \om_{\alpha,0}$,   the last equality holds if  merely $u\in L^{\infty}(\om)$ and $\tv_{\alpha}^{C}(u)<\infty$, that is without the need for $u$ to belong to $C^{1}(\overline{\om})$ or $\bv(\om)$.  \end{rem}

As we show next, \eqref{wTV_g} and \eqref{wTV_phi} are equivalent for $u\in\bv(\om)$ and a continuous weight function. 
\begin{prop}\label{lbl:TVphi_TVg_BV}
Let $\alpha\in C(\overline{\om})$ with $\alpha\ge 0$ and $u\in\bv(\om)$. Then
\begin{equation}\label{TVphi_TVg_BV}
\tv_{\alpha}^{C}(u)=\tv_{\alpha}^{W}(u)=\int_{\om} \alpha\,d|Du|.
\end{equation}
\end{prop}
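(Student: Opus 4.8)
The plan is to establish the two equalities in \eqref{TVphi_TVg_BV} by squeezing $\tv_\alpha^W(u)$ between $\int_\om \alpha\,d|Du|$ and itself, using Proposition \ref{lbl:dual_weighted_TV} to dispose of the $\tv_\alpha^C$ part for free. The essential mathematical content is already contained in Proposition \ref{lbl:dual_weighted_TV} and Remark \ref{lbl:remark_relaxation}; what remains is to transfer the latter from $\bv$-weights to merely continuous weights.

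First I would record the elementary inequality $\tv_\alpha^C(u) \le \tv_\alpha^W(u)$: every competitor $\phi \in C_c^\infty(\om,\RR^d)$ with $|\phi(x)| \le \alpha(x)$ for all $x$ lies in $W_0^q(\di;\om)$ (by the definition of the latter as the $\|\cdot\|_{W^q(\di;\om)}$-closure of $C_c^\infty$) and satisfies $|\phi| \le \alpha$ a.e., hence is admissible in \eqref{wTV_g}. Thus the supremum defining $\tv_\alpha^C(u)$ runs over a subset of the competitors for $\tv_\alpha^W(u)$. Combined with Proposition \ref{lbl:dual_weighted_TV}, this already yields the lower bound $\int_\om \alpha\,d|Du| = \tv_\alpha^C(u) \le \tv_\alpha^W(u)$.

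The core of the argument is the reverse bound $\tv_\alpha^W(u) \le \int_\om \alpha\,d|Du|$, for which I would approximate the continuous weight \emph{from above} by smooth weights covered by the integral representation. Concretely, extend $\alpha$ to a nonnegative compactly supported continuous function on $\RR^d$ (Tietze), mollify, and shift up by a vanishing constant to obtain $\alpha_n \in C^\infty(\overline\om)$ with $\alpha_n \ge \alpha \ge 0$ on $\overline\om$ and $\alpha_n \to \alpha$ uniformly. Each $\alpha_n$ is Lipschitz on the bounded set $\om$, hence bounded and in $W^{1,1}(\om)\subset\bv(\om)$, and, being continuous, satisfies $\alpha_n^- = \alpha_n$. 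Since $\alpha \le \alpha_n$, the constraint set in \eqref{wTV_g} for weight $\alpha$ is contained in that for weight $\alpha_n$, whence $\tv_\alpha^W(u) \le \tv_{\alpha_n}^W(u)$. Now $\tv_{\alpha_n}^W = J_n^{**}$ for the functional associated with $j_n(x,z)=\alpha_n(x)|z|$ by Corollary \ref{lbl:J_star_alpha}, and since $\alpha_n$ is a bounded $\bv$-weight, Remark \ref{lbl:remark_relaxation} gives $\tv_{\alpha_n}^W(u) = J_n^{**}(u) = \int_\om \alpha_n^-\,d|Du| = \int_\om \alpha_n\,d|Du|$. Letting $n\to\infty$ and using $\alpha_n \to \alpha$ uniformly together with $|Du|(\om)<\infty$, I conclude $\tv_\alpha^W(u) \le \lim_n \int_\om \alpha_n\,d|Du| = \int_\om \alpha\,d|Du|$.

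Combining the two bounds yields $\tv_\alpha^C(u) = \tv_\alpha^W(u) = \int_\om \alpha\,d|Du|$, which is \eqref{TVphi_TVg_BV}. The one point requiring care is the construction of the approximating weights $\alpha_n$: they must simultaneously dominate $\alpha$ (for monotonicity of the constraint sets), be bounded and of bounded variation and continuous (so that Remark \ref{lbl:remark_relaxation} applies verbatim and $\alpha_n^- = \alpha_n$), and converge uniformly to $\alpha$ (so that the integrals against $|Du|$ converge). Smoothing by convolution and adding a vanishing constant secures all of these at once, so I anticipate no genuine obstacle beyond this bookkeeping — in particular the delicate behavior of $\bv$-weights (the appearance of $\alpha^-$) is precisely what evaporates in the continuous case, which is why the smooth-weight identity passes cleanly to the limit.
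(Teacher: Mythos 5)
Your proof is correct, but it takes a genuinely different route from the paper's. The paper keeps the weight $\alpha$ fixed and instead approximates the \emph{function} $u$: it invokes a smooth approximation $(u_n)_{n\in\NN}\subset C^\infty(\om)\cap W^{1,1}(\om)$ with $u_n\to u$ in $L^{d^*}(\om)$ and $\int_\om\alpha\,d|Du_n|\to\int_\om\alpha\,d|Du|$ (a strict-convergence/Reshetnyak-type continuity statement from Ambrosio--Fusco--Pallara), and then sandwiches $J^{**}(u)$ between $\int_\om\alpha\,d|Du|$ and $\lim_n\int_\om\alpha\,d|Du_n|$ using that $J^{**}$ is the $L^{d^*}$-lower semi-continuous envelope of $J$ and that $\int_\om\alpha\,d|D\cdot|$ is itself lower semi-continuous; the identity $\tv_\alpha^C(u)=\int_\om\alpha\,d|Du|$ then comes from Proposition \ref{lbl:dual_weighted_TV} exactly as in your argument. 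You instead approximate the \emph{weight} from above by smooth weights $\alpha_n\ge\alpha$ and transfer the already-established integral representation for bounded $\bv$ weights (Remark \ref{lbl:remark_relaxation}, hence Proposition \ref{lbl:relaxation} and ultimately the result of Amar--De Cicco--Fusco) to the limit via monotonicity of the constraint sets in \eqref{wTV_g} and uniform convergence against the finite measure $|Du|$. This is precisely the device the paper says it cannot use \emph{directly} (``$\alpha\in C(\overline{\om})$ does not imply $\alpha\in\bv(\om)$''), and your one-sided mollification legitimately circumvents that obstruction. The trade-off: your route leans on the heavier relaxation machinery of Section \ref{sec:relaxation} but needs only the elementary inclusion $C_c^\infty(\om,\RR^d)\subset W_0^q(\di;\om)$ for the lower bound, whereas the paper's route bypasses the $\bv$-weight representation entirely at the cost of the weighted strict-approximation result for $\bv$ functions. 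All the steps you flag as requiring care (dominating $\alpha$, preserving boundedness and $\alpha_n^-=\alpha_n$, uniform convergence) do go through as you describe, so there is no gap.
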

\begin{proof}
We have that for every $u\in\bv(\om)$ there exists a sequence $(u_{n})_{n\in\NN}\subset C^{\infty}(\om)\cap W^{1,1}(\om)\cap L^{d^{\ast}}(\om)$ that converges to $u$ in $L^{d^{\ast}}(\om)$ and also $\int_{\om}\alpha\, d|Du_{n}|\to \int_{\om}\alpha\, d|Du|$; see for instance \cite[Thm. 3.9 \& Prop. 3.15]{AmbrosioBV}. This, together with the fact that $\int_{\om}\alpha\,d|D\cdot|$ is lower semi-continuous with respect to $L^{d^{\ast}}$-convergence, and also that $J^{\ast\ast}$ is the $L^{d^{\ast}}$-lower semi-continuous envelope of $J$ implies that
\[\tv_{\alpha}^{C}(u) = \int_{\om}\alpha\,d|Du|=J^{\ast\ast}(u) = \tv_{\alpha}^{W}(u),\]
where the first equality is due to Proposition \ref{lbl:dual_weighted_TV}  and the last equality is due to Corollary \ref{lbl:J_star_alpha}.
\end{proof}

Note that in the proof above, we are not able to  directly use the result of \cite{Fusco08} or its corollaries proven in the previous section, as $\alpha\in C(\overline{\om})$ does not imply $\alpha\in \bv(\om)$. For smooth (but not necessarily with integrable gradient) $u$ the following analogous result holds true; see Appendix \ref{sec:app} for its proof.

\begin{prop}\label{lbl:Tva_smooth}
Let  $\alpha\in C(\overline{\om})$ with $\alpha\ge 0$ and $u\in C^{1}(\om)$. Then
$\tv_{\alpha}^{C}(u)=\int_{\om} \alpha |\nabla u|\,dx.$
\end{prop}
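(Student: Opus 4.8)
The plan is to reduce $\tv_\alpha^C(u)$ to a supremum of $\int_\Omega \nabla u\cdot\phi\,dx$ over admissible test fields and then to sandwich it against $\int_\Omega \alpha|\nabla u|\,dx$ from above and below, taking care throughout that the latter integral may equal $+\infty$, since $u\in C^1(\Omega)$ need not lie in $W^{1,1}(\Omega)$.

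First I would rewrite the functional. For $\phi\in C_c^\infty(\Omega,\RR^d)$ and $u\in C^1(\Omega)$, applying the divergence theorem to the compactly supported field $u\phi$ gives $\int_\Omega u\,\di\phi\,dx=-\int_\Omega\nabla u\cdot\phi\,dx$; since the constraint $|\phi|\le\alpha$ is invariant under $\phi\mapsto-\phi$, this yields
\[\tv_\alpha^C(u)=\sup\Big\{\int_\Omega\nabla u\cdot\phi\,dx:\phi\in C_c^\infty(\Omega,\RR^d),\ |\phi|\le\alpha\Big\}.\]
Next, I would invoke Lemma \ref{lbl:smooth_below}: any admissible continuous, compactly supported field $\phi$ with $|\phi|\le\alpha$ can be approximated in the sup norm by $\phi_\epsilon\in C_c^\infty(\Omega,\RR^d)$ satisfying $\supp\phi_\epsilon\subseteq\supp\phi$ and $|\phi_\epsilon|\le|\phi|\le\alpha$; as $\nabla u$ is bounded on the compact set $\supp\phi$, the integrals $\int_\Omega\nabla u\cdot\phi_\epsilon\,dx$ converge to $\int_\Omega\nabla u\cdot\phi\,dx$. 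Hence the supremum above is unchanged if the test fields are allowed to be merely continuous.

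The upper bound $\tv_\alpha^C(u)\le\int_\Omega\alpha|\nabla u|\,dx$ is then immediate from the pointwise estimate $\nabla u\cdot\phi\le|\nabla u|\,|\phi|\le\alpha|\nabla u|$. For the lower bound I would construct near-optimal fields explicitly. Fixing a compact set $K\subset\subset\Omega$, I choose a cutoff $\eta_K\in C_c(\Omega)$ with $0\le\eta_K\le1$ and $\eta_K\equiv1$ on $K$, and for $\delta>0$ set
\[\phi_{K,\delta}(x):=\eta_K(x)\,\alpha(x)\,\frac{\nabla u(x)}{\sqrt{|\nabla u(x)|^2+\delta^2}}.\]
This field is continuous with compact support and satisfies $|\phi_{K,\delta}|\le\alpha$, so it is admissible; plugging it in gives $\int_\Omega\nabla u\cdot\phi_{K,\delta}\,dx=\int_\Omega\eta_K\alpha\,\frac{|\nabla u|^2}{\sqrt{|\nabla u|^2+\delta^2}}\,dx$. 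The nonnegative integrand increases to $\eta_K\alpha|\nabla u|$ as $\delta\downarrow0$, so monotone convergence yields $\tv_\alpha^C(u)\ge\int_\Omega\eta_K\alpha|\nabla u|\,dx\ge\int_K\alpha|\nabla u|\,dx$. Exhausting $\Omega$ by an increasing sequence $K_n\uparrow\Omega$ and applying monotone convergence once more gives $\tv_\alpha^C(u)\ge\int_\Omega\alpha|\nabla u|\,dx$, which together with the upper bound completes the proof, including the case where this integral is infinite.

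The main obstacle, and the reason one cannot simply quote Proposition \ref{lbl:dual_weighted_TV} or the relaxation results of Section \ref{sec:relaxation}, is precisely that $u\in C^1(\Omega)$ carries no integrability of $\nabla u$: the target integral may be $+\infty$ and $u$ need not belong to $\bv(\Omega)$. The regularization by $\delta$ (needed both to avoid the zero set of $\nabla u$ and to keep the test field continuous) combined with the double monotone-convergence and exhaustion argument is exactly what makes the lower bound robust against this lack of integrability; the remaining steps are routine.
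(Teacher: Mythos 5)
Your proof is correct, but your lower-bound argument takes a genuinely different route from the paper's. The paper also starts by integrating by parts to obtain $\tv_\alpha^C(u)\le\int_\Omega\alpha|\nabla u|\,dx$, but for the reverse inequality it localizes rather than constructs: it restricts to the subdomains $\Omega_\delta=\{x\in\Omega:\operatorname{dist}(x,\partial\Omega)>\delta\}$, on which $\nabla u$ is bounded so that $u\in\bv(\Omega_\delta)$, applies Proposition \ref{lbl:dual_weighted_TV} there to get $\int_{\Omega_\delta}\alpha|\nabla u|\,dx=V_\alpha(u,\Omega_\delta)\le\tv_\alpha^C(u)$, and then lets $\delta\downarrow0$. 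You instead exhibit explicit near-optimal test fields $\eta_K\,\alpha\,\nabla u/\sqrt{|\nabla u|^2+\delta^2}$ and pass to the limit twice by monotone convergence. Both arguments defeat the possible non-integrability of $\nabla u$ by exhaustion; yours is more self-contained, bypassing Proposition \ref{lbl:dual_weighted_TV} and the Riesz-type representation \cite[Prop.~1.47]{AmbrosioBV} behind it, at the price of needing Lemma \ref{lbl:smooth_below} to enlarge the test class from smooth to continuous fields so that your (merely continuous) candidate is admissible --- so your remark that one ``cannot simply quote'' Proposition \ref{lbl:dual_weighted_TV} is accurate only on all of $\Omega$; the paper does quote it, on subdomains. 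One small point worth making explicit in your write-up: the containment $\supp\phi_\epsilon\subseteq\supp\phi$ is not part of the statement of Lemma \ref{lbl:smooth_below}, but it does follow from the pointwise bound $|\phi_\epsilon|\le|\phi|$, so your use of it is justified.
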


We now reduce the regularity of $\alpha$ by assuming the following property for the function $\alpha^{-}\ge 0$:
\begin{equation}\label{P_lsc}
\tag{$P_{lsc}$} \text{There exists } (\alpha_{n})_{n\in\NN}\subset C(\overline{\om}),\;0\le \alpha_{n}(x) \le \alpha^{-}(x) \text{ such that } \alpha_{n}(x)\to \alpha^{-}(x), \quad \forall x\in\om.
\end{equation}
Observe that the above requirement slightly generalizes lower semi-continuity.

\begin{prop}\label{lbl:alpha_lsc}
Suppose that $\alpha\in \bv(\om)\cap L^{\infty}(\om)$, with $\alpha\ge 0$, and $\alpha^{-}$satisfies \eqref{P_lsc}. Then for all $u\in \BV(\Omega)$ we have
\begin{equation}\label{TVphi_TVg_alsc}
\tv_{\alpha^{-}}^{C}(u)=\tv_{\alpha}^{W}(u)=\int_{\om} \alpha^{-}\,d|Du|.
\end{equation}
\end{prop}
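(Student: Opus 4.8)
The plan is to prove the two displayed identities separately, the second of which is almost immediate from the results already obtained. Indeed, since $\alpha\in\bv(\om)\cap L^{\infty}(\om)$ with $\alpha\ge0$, Corollary~\ref{lbl:J_star_alpha} gives $\tv_\alpha^W=J^{**}$, while Remark~\ref{lbl:remark_relaxation}---applicable precisely because $0\le\alpha\le\|\alpha\|_{L^{\infty}(\om)}$ and $\alpha\in\bv(\om)$---evaluates $J^{**}(u)=\int_\om\alpha^-\,d|Du|$ for $u\in\bv(\om)$. Hence $\tv_\alpha^W(u)=\int_\om\alpha^-\,d|Du|$, and the whole statement reduces to showing
\[\tv_{\alpha^-}^C(u)=\int_\om\alpha^-\,d|Du|.\]

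For the inequality ``$\le$'' I would take an arbitrary admissible field $\phi\in C_c^\infty(\om,\RR^d)$ with $|\phi(x)|\le\alpha^-(x)$ for all $x$, integrate by parts to obtain $\int_\om u\,\di\phi\,dx=-\int_\om\phi\cdot\,dDu$, and then estimate, using the polar decomposition $Du=\sigma_{Du}\,|Du|$ with $|\sigma_{Du}|=1$ holding $|Du|$-a.e.,
\[\int_\om u\,\di\phi\,dx=-\int_\om\phi\cdot\sigma_{Du}\,d|Du|\le\int_\om|\phi|\,d|Du|\le\int_\om\alpha^-\,d|Du|,\]
where the last step uses that the pointwise bound $|\phi|\le\alpha^-$ holds everywhere, hence $|Du|$-a.e. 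Taking the supremum over $\phi$ yields $\tv_{\alpha^-}^C(u)\le\int_\om\alpha^-\,d|Du|$.

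The reverse inequality is the only nontrivial point, and it is here that \eqref{P_lsc} enters: since $\alpha^-$ is in general not continuous, Proposition~\ref{lbl:dual_weighted_TV} cannot be applied to it directly. The remedy is to use the continuous approximants $(\alpha_n)_{n\in\NN}\subset C(\overline{\om})$ furnished by \eqref{P_lsc}, which satisfy $0\le\alpha_n\le\alpha^-$ and $\alpha_n\to\alpha^-$ pointwise. For each fixed $n$, Proposition~\ref{lbl:dual_weighted_TV} gives $\tv_{\alpha_n}^C(u)=\int_\om\alpha_n\,d|Du|$; moreover, since $\alpha_n\le\alpha^-$ pointwise, every $\phi$ admissible for $\tv_{\alpha_n}^C$ is also admissible for $\tv_{\alpha^-}^C$, whence $\int_\om\alpha_n\,d|Du|=\tv_{\alpha_n}^C(u)\le\tv_{\alpha^-}^C(u)$. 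Passing to the limit $n\to\infty$, and using that $0\le\alpha_n\le\alpha^-\le\|\alpha\|_{L^{\infty}(\om)}$ together with finiteness of $|Du|(\om)$, dominated convergence yields $\int_\om\alpha_n\,d|Du|\to\int_\om\alpha^-\,d|Du|$, and therefore $\int_\om\alpha^-\,d|Du|\le\tv_{\alpha^-}^C(u)$. Combined with the upper bound this gives the claimed equality.

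I expect the only real obstacle to be the lower bound, and specifically the necessity of approximating $\alpha^-$ from below by continuous weights: \eqref{P_lsc} is tailored so that the nesting of the admissible constraint sets produces a one-sided comparison of the $C$-functionals, while the uniform bound by $\|\alpha\|_{L^{\infty}(\om)}$ (and finiteness of $|Du|$) legitimizes the passage to the limit. All remaining steps are routine, being either the standard integration-by-parts estimate or direct appeals to Corollary~\ref{lbl:J_star_alpha}, Remark~\ref{lbl:remark_relaxation}, and Proposition~\ref{lbl:dual_weighted_TV}.
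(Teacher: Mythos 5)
Your proof is correct and follows essentially the same route as the paper: both reduce $\tv_\alpha^W(u)=\int_\om\alpha^-\,d|Du|$ to Corollary~\ref{lbl:J_star_alpha} and Remark~\ref{lbl:remark_relaxation}, and both obtain $\tv_{\alpha^-}^C(u)=\int_\om\alpha^-\,d|Du|$ by approximating $\alpha^-$ from below with the continuous weights supplied by \eqref{P_lsc} and invoking Proposition~\ref{lbl:dual_weighted_TV}. The only cosmetic differences are that you prove the upper bound by a direct integration by parts against the polar decomposition of $Du$ (the paper instead notes that each admissible $\phi$ is dominated by the continuous weight $|\phi|\le\alpha^-$ and reuses Proposition~\ref{lbl:dual_weighted_TV}), and you pass to the limit via dominated convergence along the sequence $(\alpha_n)_{n\in\NN}$ rather than via the supremum identity \eqref{sup1}.
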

\begin{proof}
Since $\alpha^{-}$ satisfies \eqref{P_lsc}, we have $\alpha^{-}(x)=\sup \left\{ \tilde{\alpha}(x) : \tilde{\alpha}\in C(\om),\;\tilde{\alpha}\le \alpha\right\}$, which yields
%\[\sup_{\substack{\tilde{\alpha}\le \alpha^{-}\\ \tilde{\alpha}\in C(\overline{\om})}}\int_{\om} \tilde{\alpha}\, d|Du|
%=\int_{\om}\alpha^{-}\,d|Du|.
%\]
\begin{equation}\label{sup1}
\sup_{\substack{\tilde{\alpha}\le \alpha^{-}\\ \tilde{\alpha}\in C(\overline{\om})}}\int_{\om} \tilde{\alpha}\, d|Du|
=\int_{\om}\alpha^{-}\,d|Du|.
\end{equation}
%On the other hand, if $(\alpha_{n})_{n\in\NN}$ is taken as in \eqref{P_lsc}, then from dominated convergence we obtain 
%\[\lim_{n\to\infty} \int_{\om} \alpha_{n}\, d|Du|=\int_{\om} \lim_{n\to\infty} \alpha_n\,d|Du|=\int_{\om}\alpha^{-}\,d|Du|,\]
%and hence
%\begin{equation}\label{sup1}
%\sup_{\substack{\tilde{\alpha}\le \alpha^{-}\\ \tilde{\alpha}\in C(\overline{\om})}}\int_{\om} \tilde{\alpha}\, d|Du|
%=\int_{\om}\alpha^{-}\,d|Du|.
%\end{equation}
Moreover, we find
\begin{align*}
\sup_{\substack{\tilde{\alpha}\le \alpha^{-}\\ \tilde{\alpha}\in C(\overline{\om})}}\int_{\om} \tilde{\alpha}\, d|Du|&= 
 \sup_{\substack{\tilde{\alpha}\le \alpha^{-}\\ \tilde{\alpha}\in C(\overline{\om})}}  \sup \left \{\int_{\om}u\, \di\psi\,dx :  \psi\in C_{c}^{\infty}(\om,\RR^{d}),\; |\psi(x)|\le \tilde{\alpha}(x),\;\forall x\in\om \right \}\\
 &\le \sup \left \{\int_{\om}u\, \di\phi\,dx : \phi\in C_{c}^{\infty}(\om,\RR^{d}),\; |\phi(x)|\le \alpha^{-}(x),\;\forall x\in\om \right \}\\
 &=\tv_{\alpha^{-}}^{C}(u).
\end{align*}
But, the above inequality is, in fact, an equality as for every element $\int_{\om} u\,\di\phi \,dx$ with $\phi\in C_{c}^{\infty}(\om,\RR^{d})$ and $|\phi|\le \alpha^{-} $, we have
\[\int_{\om} u\,\di\phi \,dx\le \sup \left \{\int_{\om}u\, \di\psi\,dx : \psi\in C_{c}^{\infty}(\om,\RR^{d}),\; |\psi(x)|\le |\phi(x)|,\;\forall x\in\om \right \}.\]
From this, \eqref{sup1}, Corollary \ref{lbl:J_star_alpha} and Remark \ref{lbl:remark_relaxation}, we get that
\[\tv_{\alpha-}^{C}(u)=\tv_{\alpha}^{W}(u)=\int_{\om}\alpha^{-}\,d|Du|,\]
which ends the proof.
\end{proof}
%\com{(Is is true that if $\alpha$ has a lower semi-continuous representative then $\alpha^{-}$ is lower semi-continuous as well?)}
%Now if we further assume coercivity on the weight, we get the following density result.
Under a uniform positivity assumption on the weight, we obtain a density result, which is of use, e.g., when predualizing the renowned Rudin-Osher-Fatemi model; see \cite{Rudin92}, and \cite{Hintermueller04, hint_rau, hint_rau_ros} for its dualization.

\begin{prop}\label{lbl:TVphi_TVg_Ld_lsc}
Let the assumptions of Proposition \ref{lbl:alpha_lsc} hold true, and assume in addition that $\alpha>c>0$ a.e. in $\om$. Then we have for $q\in [d,+\infty)$ that
\begin{equation*}
\overline{\left\{ \di\phi : \phi\in C_{c}^{\infty}(\om,\RR^{d}),\; |\phi|\le \alpha^{-}   \right \}}^{L^{q}(\om)}= \left\{\di g : g\in W_{0}^{q}(\di;\om),\; |g|\le \alpha^{-},\;a.e. \right \}.
\end{equation*}
\end{prop}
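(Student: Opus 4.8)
The plan is to read the claimed identity as an equality of two closed convex subsets of $L^{q}(\om)$ and to prove it by a bipolar argument in the dual pairing $(L^{q}(\om),L^{p}(\om))$, where $p=q/(q-1)\in (1,\ds]$. Write
\[A := \{\di\phi : \phi\in C_{c}^{\infty}(\om,\RR^{d}),\ |\phi|\le \alpha^{-}\}, \qquad B := \{\di g : g\in W_{0}^{q}(\di;\om),\ |g|\le \alpha^{-}\ \text{a.e.}\}.\]
Both are convex and contain $0$, and $A\subseteq B$ since $C_{c}^{\infty}(\om,\RR^{d})\subset W_{0}^{q}(\di;\om)$, so the statement is precisely $\overline{A}^{L^{q}}=B$. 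First I would compute the support functions $\sigma_{A}(u):=\sup_{w\in A}\int_{\om}wu\,dx$ and $\sigma_{B}$ over $u\in L^{p}(\om)$, show they coincide, deduce $A^{\circ}=B^{\circ}$ and hence $A^{\circ\circ}=B^{\circ\circ}$, and then invoke the bipolar theorem together with the fact that weak and norm closures agree for convex sets to conclude $\overline{A}^{L^{q}}=\overline{B}^{L^{q}}$. It then remains to verify that $B$ is already closed.

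The equality of support functions is where the earlier results do the work. By definition $\sigma_{A}=\tv_{\alpha^{-}}^{C}$, while $\sigma_{B}$ is the supremum defining a weighted $\tv$ over $W_{0}^{q}(\di;\om)$ with constraint $|g|\le\alpha^{-}$; since $\alpha\in\bv(\om)$ is approximately continuous $\mathcal{L}^{d}$-a.e., one has $\alpha^{-}=\alpha$ $\mathcal{L}^{d}$-a.e., so this constraint agrees a.e.\ with $|g|\le\alpha$ and therefore $\sigma_{B}=\tv_{\alpha}^{W}$. For $u\in\bv(\om)$, Proposition \ref{lbl:alpha_lsc} gives $\tv_{\alpha^{-}}^{C}(u)=\int_{\om}\alpha^{-}\,d|Du|$, while Corollary \ref{lbl:J_star_alpha} and Remark \ref{lbl:remark_relaxation} give $\tv_{\alpha}^{W}(u)=\int_{\om}\alpha^{-}\,d|Du|$, so $\sigma_{A}=\sigma_{B}$ there. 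This is exactly where the additional hypothesis $\alpha>c>0$ enters: from it one checks that the approximate lower limit satisfies $\alpha^{-}(x)\ge c$ for \emph{every} $x\in\om$ (for $t<c$ the set $\{\alpha<t\}$ is null, so has zero density everywhere), whence both feasible sets contain all fields bounded by the constant $c$; consequently $\sigma_{A}(u)\ge c\,\tv(u)$ and likewise $\sigma_{B}(u)\ge c\,\tv(u)$, forcing $\sigma_{A}(u)=\sigma_{B}(u)=+\infty$ for every $u\in L^{p}(\om)\setminus\bv(\om)$. Thus $\sigma_{A}\equiv\sigma_{B}$ on all of $L^{p}(\om)$ and the polars coincide.

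The remaining step, closedness of $B$, is the main technical obstacle. Given $\di g_{n}\to w$ in $L^{q}(\om)$ with $g_{n}\in W_{0}^{q}(\di;\om)$ and $|g_{n}|\le\alpha^{-}$, the bound $\alpha^{-}\le\|\alpha\|_{\infty}$ and boundedness of $\om$ make $(g_{n})$ bounded in $L^{q}$, while $(\di g_{n})$ is bounded as a convergent sequence; hence $(g_{n})$ is bounded in $W^{q}(\di;\om)$. Since $q\ge d\ge 2>1$, this space is reflexive (it embeds as a closed subspace of $L^{q}\times L^{q}$), so a subsequence converges weakly to some $g$ with $\di g=w$; the pointwise constraint set $\{g:|g|\le\alpha^{-}\}$ and the subspace $W_{0}^{q}(\di;\om)$ are convex and norm-closed, hence weakly closed, which yields $g\in W_{0}^{q}(\di;\om)$ and $|g|\le\alpha^{-}$, so $w=\di g\in B$. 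Combining the three steps gives $\overline{A}^{L^{q}}=\overline{B}^{L^{q}}=B$. The delicate points to watch are the everywhere-bound $\alpha^{-}\ge c$ (which alone justifies the extra positivity hypothesis and secures the off-$\bv$ agreement of the support functions) and the simultaneous preservation of the pointwise bound and of the vanishing normal trace under the weak limit.
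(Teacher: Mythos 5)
Your proposal is correct and follows essentially the same route as the paper: identify $\tv_{\alpha^{-}}^{C}$ and $\tv_{\alpha}^{W}$ as the support functions of the two sets, use Proposition \ref{lbl:alpha_lsc} on $\bv(\om)$ and the uniform positivity of $\alpha$ to force both to be $+\infty$ off $\bv(\om)$, and then dualize back using closedness of the $W_{0}^{q}(\di;\om)$-set. The only difference is that you prove that closedness directly (via reflexivity of $W^{q}(\di;\om)$ and weak closedness of the convex constraint set), where the paper cites an external reference; your argument for that step is sound.
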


\begin{proof}
For every $u\in\bv(\om)$ we have
\[
\tv_{\alpha^{-}}^{C}(u)= \int_{\om} \alpha^{-}d|Du| =\overline{J}(u)=J^{\ast\ast}(u)=\tv_{\alpha}^{W}(u)	,
\]
where the first equality stems from Proposition \ref{lbl:alpha_lsc}, the second one is due to \cite{Fusco08}, the third equality comes from Proposition \eqref{lbl:J1_J2}, and Corollary \ref{lbl:J_star_alpha} yields the final relation.

As $\alpha>c>0$ a.e. in $\om$, we have $\tv_{\alpha^{-}}^{C}(u)=\tv_{\alpha}^{W}(u)=+\infty$ for every $u\in L^{p}(\om)\setminus \bv(\om)$. Hence, we have $\tv_{\alpha^{-}}^{C}=\tv_{\alpha}^{W}$ in all of $L^ p(\Omega)$ which is equivalent to \begin{equation*}
\mathcal{I}_{\left\{ \di\phi : \phi\in C_{c}^{\infty}(\om,\RR^{d}),\; |\phi|\le \alpha^{-}   \right \}}^{\ast}(u)
= \mathcal{I}_{\left\{\di g : g\in W_{0}^{q}(\di;\om),\; |g|\le \alpha^{-},\;a.e. \right \}}^{\ast}(u),\quad \text{for all }u\in L^{p}(\om).
\end{equation*}
After dualization and using the fact that the second set in the equation above is closed in $L^{q}(\om)$ (compare \cite{Holler12_subdif} for a proof for scalar $\alpha$ which readily carries over to the present setting), we obtain for all $u^{\ast}\in L^{q}(\om)$ that
\begin{equation*}
\mathcal{I}_{\overline{\left\{ \di\phi : \phi\in C_{c}^{\infty}(\om,\RR^{d}),\; |\phi|\le \alpha^{-}  \right \}}^{L^{q}(\om)}}(u^{\ast})
= \mathcal{I}_{\left\{\di g : g\in W_{0}^{q}(\di;\om),\; |g|\le \alpha^{-},\;a.e. \right \}}(u^{\ast}),
\end{equation*}
which proves the claimed density.
\end{proof}

Density results of the above type have recently gained attention in the literature \cite{sing_mol, Hint_Rau_density, hint_rau,  hint_rau_ros} and enjoy a variety of applications. In the context of variational regularization in image reconstruction, an analogous density result for continuous weights $\alpha$ was used in \cite{hint_rau} in order to show equivalence of a weighted $\tv$-regularization problem and a corresponding predual problem; see \cite{Hintermueller04} for a scalar weight. We emphasize that the result of Proposition \ref{lbl:TVphi_TVg_Ld_lsc} allows for the dualization for a larger class of weights rather than continuous functions.  In the following section, we discuss an even more general duality result.

\section{A general duality result}\label{sec:duality}
In this section we consider the variational regularization of linear inverse problems using structural total variation regularization. Our goal is to show existence of a solution as well as equivalence to a saddle-point formulation in the continuous setting under mild conditions that are naturally satisfied by the applications of our interest. The saddle-point problem will be formulated in a way that it only requires an explicit form of $J^*$, but not of $J^{**}$, and such that its numerical solution by duality-based optimization algorithms is direct.

Since we aim to capture diverse applications, such as structural-TV-regularized MR and PET reconstruction, our main duality result will be rather general with technical assumptions. In order to better demonstrate the essence of our result, we first consider the particular case where data fidelity will be guaranteed by a norm discrepancy.
For this purpose, consider
\begin{equation} \label{eq:general_min_problem_norm}
\inf_{u\in L^p (\Omega)} J^{**}(u)  + \lambda\|Ku-f\|_{S},
\end{equation} 
where $p \in (1,\ds]$, $(S,\|\cdot \|_S)$ is a Banach space with $f \in S$, $K:L^p(\Omega ) \rightarrow S $ is a bounded linear operator (i.e., $K\in \mathcal{L}(L^p(\Omega ),S) $), $J^{**}$ corresponds to the structural TV functional as defined above, and $\lambda>0$ is a regularization parameter.
 
Note that, without further assumptions (see Propositions \ref{lbl:J_star} and \ref{lbl:relaxation}), we only have $J^* = J^ {***}$, but $J^{**}$ is not available explicitly. Hence we are interested in showing equivalence of \eqref{eq:general_min_problem_norm} to an appropriate predual problem which only requires $J^ *$. We will see that, for general $J$, this is possible if either $c|z| \leq j(x,z)$ for every $z\in\RR^{d}$ and for almost every $x\in\om$, or the inversion of $K$ is essentially well-posed, i.e., $K$ has a closed range and a finite dimensional kernel. Regarding the latter, we note that this is in particular true if we assume $K^ *K$ to be invertible, with $K^{\ast}$ being the adjoint of $K$.  A first result for the particular setting of \eqref{eq:general_min_problem_norm} is stated next.

\begin{prop}\label{prop:duality_norm_discrepancy}
Let $p \in (1,\ds]$, $(S,\|\cdot \|_S)$ a Banach space with $f \in S$, $K \in \mathcal{L}(L^p(\Omega ),S) $
 and $\lambda > 0$.
%, $\mu \geq 0 $ two parameter. 
Assume that at least one of the following two conditions holds:
\begin{enumerate}
\item There exists $c>0$ such that $c|z| \leq j(x,z)$ for every $z\in\RR^{d}$ and for almost every $x\in\om$.
\item $\range(K^*)$ is closed and $\ker(K)$ is finite dimensional.
\end{enumerate}
Then there exists a solution to the primal problem 
\begin{equation}\label{eq:primal_problem_norm}
 \inf_{u\in L^p (\Omega)} J^{**}(u)  + \lambda\|Ku-f\|_{S}, 
 \end{equation}
to a corresponding  predual problem, and to the saddle-point problem 
\begin{equation} \label{eq:sp_problem_norm}
\inf_{\substack{p \in W^q_0(\dive;\om) \\ p \in Q}} \sup_{u\in L^p (\Omega)} (\di p ,u) - \lambda\|Ku-f\|_{S}, 
\end{equation}
where $Q = \{g \in W^q_0(\dive;\om)\cap L^\infty(\Omega,\R^d) : \;j^\circ(x,g(x))\leq 1 \text{ for a.e. } x \in \Omega\}$. 
They all coincide and are equivalent in the sense that the pair $(p,u)$ is a solution to the saddle-point problem if and only if $u$ is a solution to \eqref{eq:primal_problem_norm} and $p$ is a solution to the predual problem.
\begin{proof}
This is a special case of Theorem \ref{prop:general_duality_result} below. We, hence, refer to the corresponding proof.
\end{proof}
\end{prop}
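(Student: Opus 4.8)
The plan is to establish the equivalence of three problems---the primal \eqref{eq:primal_problem_norm}, its predual, and the saddle-point problem \eqref{eq:sp_problem_norm}---via Fenchel--Rockafellar duality, using the explicit form of $J^*$ obtained in Corollary \ref{lbl:J_star_alpha}. The key observation is that $J^* = \mathcal{I}_{\di(Q)}$, so the conjugate of the regularizer is an indicator function whose effective domain is the set of divergences of admissible vector fields in $Q$. First I would set up the Fenchel duality framework: writing the primal as $\inf_u F(u) + G(Ku)$ with $F = J^{**}$ and $G(\cdot) = \lambda\|\cdot - f\|_S$, the (pre)dual problem takes the form
\[
\sup_{p^* } -F^*(K^* p^*) - G^*(-p^*),
\]
and since $F^* = J^* = \mathcal{I}_{\di(Q)}$, the term $F^*(K^* p^*)$ enforces the constraint $K^* p^* = \di g$ for some $g \in Q$. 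The first main step is to verify a constraint qualification (e.g.\ the Attouch--Brezis or Rockafellar condition) guaranteeing zero duality gap and attainment of the dual supremum, which is where assumptions (i) and (ii) enter.

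The second step is to derive the saddle-point formulation \eqref{eq:sp_problem_norm} from the primal by inserting the predual representation $J^{**}(u) = \sup_{p \in Q} (\di p, u)$ into \eqref{eq:primal_problem_norm}. This representation is exactly \eqref{intro:J_astast_dual}, which follows from biconjugation together with $J^* = \mathcal{I}_{\di(Q)}$ and the fact that $J = J^{**}$ on its domain in the relevant topology. Substituting and formally interchanging $\inf_u$ and $\sup_{p \in Q}$ yields the minimax problem; the nontrivial part is justifying that the inner supremum over $u$ in \eqref{eq:sp_problem_norm} together with the outer infimum over $p$ recovers the same value and the same optimal $u$. I would establish the equivalence through the optimality system: a pair $(p,u)$ solves the saddle-point problem if and only if $u$ solves the primal, $p$ solves the predual, and the Fenchel extremality relations $K^* p^* = \di p$ and $J^{**}(u) = (\di p, u)$ hold---the latter being precisely the subdifferential characterization of Proposition \ref{lbl:subdiff_Jstarstar}.

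The existence of a primal solution is the third ingredient, requiring coercivity or compactness. Under assumption (i), the coercivity $c|z| \le j(x,z)$ gives (via Remark \ref{u_BV_finite_J}) that finite-energy sequences are bounded in $\bv(\om)$, hence relatively compact in $L^p$ for $p \le \ds$ by the compact embedding; combined with $L^p$-lower semicontinuity of $J^{**}$ and the weak lower semicontinuity of the norm discrepancy, the direct method applies. Under assumption (ii), coercivity need not hold on all of $\bv$, so the argument must split the minimization along the kernel of $K$: since $\ker(K)$ is finite dimensional and $\range(K^*)$ is closed, one controls the component of $u$ orthogonal to $\ker(K)$ through the data term and handles the finite-dimensional kernel component separately, again yielding boundedness and compactness.

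The main obstacle I anticipate is the rigorous justification of the $\inf$--$\sup$ interchange and the attainment of the dual problem under the weaker assumption (ii), where the regularizer is \emph{not} coercive. In that regime the standard coercivity-based direct method fails on $\bv(\om)$, and one must exploit the well-posedness of $K$ to recover the necessary compactness; establishing the constraint qualification for the duality gap in this non-coercive setting, and ensuring that the optimal dual variable $p$ genuinely lies in $Q \subset W^q_0(\di;\om)$ rather than merely in a weak closure, is the delicate technical heart of the argument. Since the statement defers to Theorem \ref{prop:general_duality_result}, I expect the detailed treatment of these duality and attainment issues to be carried out there in the more general framework.
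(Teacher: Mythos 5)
Your overall strategy is sound and uses the same core toolbox as the paper (Fenchel--Rockafellar duality with the Attouch--Brezis qualification, the identity $J^*=\mathcal{I}_{\di(Q)}$ from Corollary \ref{lbl:J_star_alpha}, and the extremality relations for the saddle-point equivalence), but you dualize in the opposite direction, and this changes where the real work lies. You take the primal pair $F=J^{**}$, $G=\lambda\|\cdot-f\|_S$ and obtain the predual as the Fenchel dual; since attainment in the duality theorem then lands on the predual side, you are forced to prove existence for the primal separately by the direct method -- which, as you correctly identify, is delicate precisely in case (ii), where $j$ is not coercive and $J^{**}$ gives no control of $u$ in $\bv(\om)$. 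The paper (in Theorem \ref{prop:general_duality_result}) instead sets up the duality starting from the \emph{predual}: $F(p)=\mathcal{I}_Q(p)$ on $W^q_0(\di;\om)$, $T p=\di p$, $G=(\lambda D\circ K)^*$, so that the primal problem appears as the dual and its solution exists automatically from the attainment in the Attouch--Brezis theorem -- the assumptions (i) and (ii) are used only to show that $\bigcup_{\mu\ge 0}\mu\,[\,K^*\domain((\lambda D)^*)-\di(Q)\,]$ is a closed vector space. Existence for the predual is then the easy direct-method argument, since $Q$ is bounded in $L^\infty(\om,\R^d)$ (hence in $L^q$) and $(\lambda D\circ K)^*$ is coercive. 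In short: your route is viable but leaves the hardest step (primal attainment without coercivity of $j$) as an acknowledged obstacle with only a sketched kernel-splitting; the paper's reversed dualization makes that step disappear. Two smaller points: the embedding $\bv(\om)\hookrightarrow L^{\ds}(\om)$ is continuous but \emph{not} compact at the endpoint $p=\ds$, so your compactness claim should be replaced by boundedness plus weak lower semi-continuity of both (convex, lsc) terms; and the representation $J^{**}(u)=\sup_{g\in Q}(\di g,u)$ needs no appeal to ``$J=J^{**}$ on its domain'' (which is false in general) -- it is immediate from $J^{**}=(J^*)^*$ and $J^*=\mathcal{I}_{\di(Q)}$.
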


We mention that the above result readily carries over to data fidelities of the type $\|Ku-f\|_S^r$, with $r\in [1,\infty)$. Here, often $r=2$ is of interest when $S$ is a Hilbert space, or $r=p$ when $S=W^{k,p}(\om)$, with $k\in\mathbb{N}_0$ and $p\in [1,\infty)$.

Note that the coercivity assumption $(i)$ of Proposition \ref{prop:duality_norm_discrepancy} excludes the case where $J^{\ast\ast}(u)=\int_{\om} \alpha \,d |\Du|$ with vanishing weight $\alpha$. The following example shows that if the weight is not bounded uniformly away from zero, existence for \eqref{eq:primal_problem_norm} is not guaranteed, in general (not in $\bv(\om)$ but also not even in $L^{2}(\om)$). This observation justifies assumption $(i)$ very well.

\begin{prop}\label{counterexample}
There exists $\om\subset \RR^{d}$, $\alpha\in C(\overline{\om})$ with $\alpha$ vanishing only at one point, a Banach space $S$, data $f\in S$ and an injective, bounded linear operator $K:L^{2}(\om)\to S$ such that the minimization problem
\begin{equation}\label{min_counterex}
\inf_{u\in L^{2}(\om)} \int_{\om}\alpha\,d|Du|+\|Ku-f\|_{S},
\end{equation}
does not have a solution.
\end{prop}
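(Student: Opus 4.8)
Here is how I would establish Proposition \ref{counterexample}.

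The idea is that at the point where $\alpha$ vanishes variation is essentially free, so a datum modeled on a Dirac mass concentrated there can be matched arbitrarily well by $L^2$ functions while the regularizer tends to zero, even though the limiting object is not a function. Concretely, I would take $\om=B(0,1)\subset\RR^2$ (so that $p=2=\ds$) and $\alpha(x)=|x|^2$, which belongs to $C(\overline\om)$, is non-negative, and vanishes only at the origin. For the forward operator I would use a smoothing convolution: fix a Gaussian $G\in L^1(\RR^2)\cap L^2(\RR^2)$ with $\hat G(\xi)\neq 0$ for all $\xi$, set $S=L^2(\RR^2)$, and define $K:L^2(\om)\to S$ by $Ku=G*\tilde u$, where $\tilde u$ denotes the extension of $u$ by zero. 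Young's inequality gives $\|Ku\|_{L^2(\RR^2)}\le\|G\|_{L^1}\|u\|_{L^2(\om)}$, so $K$ is bounded, while $\widehat{Ku}=\hat G\,\widehat{\tilde u}$ together with $\hat G\neq 0$ shows $K$ is injective. Finally I would set $f:=G\in S$; informally $f=K\delta_0$, the image of the Dirac mass at the vanishing point.

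Next I would show the infimum in \eqref{min_counterex} is zero by producing a minimizing sequence that concentrates at the origin. Let $\rho\in C_c^\infty(B(0,1))$ with $\rho\ge0$ and $\int\rho=1$, and set $\rho_n(x)=n^2\rho(nx)$, which is supported in $B(0,1/n)\subset\om$ for large $n$. Since $(\rho_n)$ is an approximate identity, $K\rho_n=G*\rho_n\to G=f$ in $L^2(\RR^2)$, so the discrepancy term tends to $0$. For the regularizer, $\rho_n\in C_c^\infty\subset W^{1,1}$ gives $\int_\om\alpha\,d|D\rho_n|=\int|x|^2|\nabla\rho_n|\,dx$, and the substitution $y=nx$ yields $\int|x|^2|\nabla\rho_n|\,dx=n^{-1}\int|y|^2|\nabla\rho|\,dy\to0$. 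Hence the objective along $(\rho_n)$ tends to $0$, and since it is non-negative the infimum equals $0$.

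It then remains to rule out attainment. Suppose, for contradiction, that some $u^*\in L^2(\om)$ realizes the value $0$; then both $J^{**}(u^*)=0$ and $Ku^*=f$. The key step is to deduce that $u^*$ is a.e. constant. For any $\om'\subset\subset\om\setminus\{0\}$ one has $\alpha\ge c'>0$ on $\om'$; testing the predual representation $\tv_\alpha^W=J^{**}$ (Corollary \ref{lbl:J_star_alpha}, see \eqref{wTV_g}) with $g=c'\phi$, $\phi\in C_c^\infty(\om',\RR^2)$, $|\phi|\le1$ — which satisfies $g\in W_0^q(\di;\om)$ and $|g|\le\alpha$ — gives $c'\,|Du^*|(\om')\le J^{**}(u^*)=0$ via \eqref{tv_def}. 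Thus $|Du^*|(\om')=0$ for every such $\om'$, and since $\om\setminus\{0\}$ is connected and $\{0\}$ is Lebesgue-null, $u^*\equiv c$ a.e. for some constant $c$. But then $Ku^*=c\,(G*\mathbf{1}_\om)$, and $Ku^*=f=G$ would force $\hat G\,(c\,\widehat{\mathbf{1}_\om})=\hat G$, i.e. $c\,\widehat{\mathbf{1}_\om}\equiv1$ (as $\hat G\neq0$), which is impossible because $\widehat{\mathbf{1}_\om}$ is continuous and tends to $0$ at infinity by Riemann–Lebesgue. This contradiction shows \eqref{min_counterex} admits no solution.

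The only nonroutine point is the passage from $J^{**}(u^*)=0$ to ``$u^*$ constant''. Because $\alpha$ degenerates at the origin, the regularizer is not coercive and does not a priori place $u^*$ in $\bv(\om)$, so I cannot invoke Remark \ref{u_BV_finite_J}; the localization argument above — restricting to sets where $\alpha$ is bounded below and exploiting the dual form \eqref{wTV_g} — is exactly what circumvents this. I would also note in passing that the example is consistent with Proposition \ref{prop:duality_norm_discrepancy}: assumption (i) fails since $\alpha$ is not bounded below, and assumption (ii) fails since the smoothing operator $K$ has non-closed range; everything else is routine verification.
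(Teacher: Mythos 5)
Your proof is correct, but it rests on a genuinely different construction from the paper's. The paper takes $S=\bigl(\overline{C_{c}^{\infty}(\om)}^{\,\|\cdot\|_{\infty}+\|\nabla\cdot\|_{\infty}}\bigr)^{*}$, the operator $Ku(\phi)=\int_{\om}u\phi\,dx$, and the datum $f=\delta_{0}$, the Dirac mass at the point where $\alpha$ vanishes; you take $S=L^{2}(\RR^{2})$, a Gaussian convolution for $K$, and $f=G$ (formally $K\delta_{0}$). Both arguments drive the objective to zero with a sequence concentrating at the degenerate point --- the paper uses normalized characteristic functions of shrinking balls together with a weight decaying like $1/n^{2}$ on the circles of radius $1/n$, you use mollifiers with $\alpha(x)=|x|^{2}$, and the scaling computations are the same in spirit --- and both conclude non-attainment from the single fact that the datum lies outside $\range(K)$: in the paper because no $L^{2}$ function represents $\delta_{0}$, in yours because $Ku^{*}=G$ forces the Fourier transform of the zero-extension of $u^{*}$ to be identically $1$, contradicting Riemann--Lebesgue. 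In that light, your intermediate step deducing that a putative minimizer is a.e.\ constant is superfluous: the Fourier argument already excludes $Ku^{*}=G$ for \emph{every} $u^{*}\in L^{2}(\om)$, so you never need to extract information from $J^{**}(u^{*})=0$, and you thereby also avoid having to fix the meaning of $\int_{\om}\alpha\,d|Du|$ off $\bv(\om)$ (an ambiguity the paper's proof sidesteps in exactly the same way, by only ever using the fidelity term to rule out attainment). The constancy step is nonetheless correct as written, and the localization via the dual representation $\tv_{\alpha}^{W}$ is a sound way to handle the lack of coercivity. What your version buys is a Hilbert-space target and a very concrete smoothing operator; what the paper's buys is a one-line non-attainment argument. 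Both operators have non-closed range, as they must in view of Proposition \ref{prop:duality_norm_discrepancy}.
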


\begin{proof}
Let $d=2$ and define $\om=[-L,L]\times [-L,L]$. Moreover, let $\alpha$ be a positive continuous function on $\om$ that vanishes only at the origin and which satisfies
\begin{equation}
\alpha(x)\le \frac{1}{n^{2}}, \quad\text{for all }x\in M_{n}:=\left\{y\in\om : |y|=\frac{1}{n} \right\}.
\end{equation}
Define $S:=\left(\overline{C_{c}^{\infty}(\om)}^{\|\cdot\|}\right)^{\ast}$, where $\|\cdot\|:=\|\cdot\|_{\infty}+\|\nabla\cdot\|_{\infty}$, and $K:L^{2}(\om)\to S$ with
\[Ku(\phi)=\int_{\om}u\phi\,dx,\quad \text{for all }\phi \in \overline{C_{c}^{\infty}(\om)}^{\|\cdot\|}.\]
Note that $K$ is injective, linear and bounded. In fact, concerning the latter observe
\[\|Ku\|_{S}=\sup _{\|\phi\|\le 1} Ku(\phi)=\sup _{\|\phi\|\le 1} \int_{\om} u\phi\,dx \le \sup _{\|\phi\|\le 1} C\|\phi\|_{\infty}\|u\|_{L^{2}(\om)}\le \sup _{\|\phi\|\le 1} C\|\phi\|\|u\|_{L^{2}(\om)}\le C\|u\|_{L^{2}(\om)},\]
for an appropriate constant $C>0$. Finally choose $f=\delta_{0}$, i.e., the Dirac measure at zero. We claim that with this set-up the infimum in \eqref{min_counterex} is zero. Indeed, define $u_{n}:=\frac{1}{\operatorname{meas}(N_{n})}\mathcal{X}_{N_{n}}$ where $N_{n}:=\left\{x\in\mathbb{R}^2 : |x| \le \frac{1}{n}\right\}$. Given that $\operatorname{meas}(N_{n})=\frac{\pi}{n^{2}}$, we have 
\[\int_{\om}\alpha\,d|Du_{n}|\le \frac{1}{n^{2}} |Du_{n}|(M_{n}) \le \frac{1}{n^{2}}\frac{n^{2}}{\pi} \mathcal{H}(M_{n})= \frac{1}{n^{2}} \frac{n^{2}}{\pi} 2\pi \frac{1}{n}=\frac{2}{n}\to 0.\]
Now for the fidelity term we have 
\begin{align*}
\|Ku_{n}-\delta_{0}\|_{S}
&\le \sup_{\|\phi\|\le 1} \left | \int_{\om} u_{n} \phi\,dx - \phi(0) \right |\le  \sup_{\|\phi\|\le 1} \frac{1}{\operatorname{meas}(N_{n})}\int_{N_{n}} |\phi(x)-\phi(0)|\,dx\\
&\le  \sup_{\|\phi\|\le 1} \frac{\|\nabla \phi\|_{\infty}}{\operatorname{meas}(N_{n})}\int_{N_{n}} |x|_2\,dx \le \frac{1}{\operatorname{meas}(N_{n})} \int_{N_{n}} \frac{1}{n}\,dx \to 0.
\end{align*}
Hence, if there was a minimizer $\tilde{u}\in L^2(\om)$, then $K\tilde{u}$ should be equal to $\delta_{0}$ in the sense that $\int_{\om}\tilde{u}\phi\,dx=\phi(0)$ for every $\phi\in C_{c}^{\infty}(\om)$. But this is impossible and hence there is no minimizer for the problem \eqref{min_counterex}.

\end{proof}

 Observe that the operator $K$ in the proof of Proposition \ref{counterexample} has no closed range. This can be seen easily as for the sequence $(u_{n})_{n\in\NN}$, in the proof above, we have $\|Ku_{n}-\delta_{0}\|_{S} \to 0$ and $\delta_{0}\notin \mathrm{Rg}(K)$. From the closed range theorem we also get that the range of $K^{\ast}$ is not closed.
 As $K$ is injective, and, in particular, it then has finite dimensional kernel, all other assumptions of Proposition \ref{prop:duality_norm_discrepancy} are satisfied. Hence the closed range assumption is tight in the sense that, for non-closed range operators we cannot expect a similar result without further assumptions on the integrand $j$.

Motivated by the particular case of a norm discrepancy as data fidelity, we now consider a more general setting. 
In fact, let $p \in (1,\ds]$, $\lambda >0$, $K \in \mathcal{L}(L^ p(\Omega),S)$ with $S$ being a Banach space, and assume $D:S \rightarrow \overline{\RR}$ to be convex and lower semi-continuous. We aim to solve
\begin{equation} \label{eq:general_problem}
\inf_{u\in L^p (\Omega)} J^{**}(u)  +  (\lambda D \circ K)(u),
\end{equation} 
where $J^{**}$ again corresponds to the structural TV functional as defined above. We recall first the following result which follows from  \cite[Theorem 19]{Rockafellar74}.
\begin{lem} \label{lem:domain_predual_vectorspace} Let $S$ be a Banach space and $D:S \rightarrow \overline{\R}$ convex, lower semi-continuous, and continuous and finite at zero. Further, let $K:L^p(\Omega) \rightarrow S$ be a bounded linear operator. Then for all $x^* \in L^q(\Omega)$ we have
\[(D \circ K)^*(x^*) = \min_{\substack{s^* \in S^* \\ K^* s^* = x^*}} D^*(s^*),\]
where the minimum is attained. Consequently, it holds that $\domain((D \circ K)^*) = K^* \domain (D^*)$.
%\begin{proof}
%We fix $x^* \in L^q(\Omega)$ and define $\psi:S \rightarrow \overline{\R}$ as $\psi = D$ and $\varphi:L^p(\Omega) \rightarrow \R$ as $\varphi(y) = \langle -x^*,y \rangle$. Then, since $0 \in \text{int} (\domain(D))$, 
%\[S \supset \bigcup _{\mu \geq 0} \mu [\domain(\psi) - K\domain(\varphi)]  =   \bigcup _{\mu \geq 0} \mu [\domain(D) - \range(K)]  \supset \bigcup _{\mu \geq 0} \mu \domain(D)  = S . \]
%Hence, using \cite[Corollary 2.3]{Attouch86} and noting that $\varphi^*(z^*) = \I_{\{0\}} (z^* + x^*)$ we get for $x^* \in L^q(\Omega)$
%\begin{align*}
%(D \circ K)^*(x^*) & = - \inf_{y \in L^p(\Omega)} \left[ D(Ky) + \langle - x^*,y \rangle \right] = - \inf_{y \in L^p(\Omega)} \left[ \psi(Ky) + \varphi(y) \rangle \right] \\ 
%& = \min_{ y^* \in S^* }\left[ \varphi^* (-K^*y^*) + \psi^*(y^*) \right] = \min_{\substack{y^* \in S^* \\ K^* y^* = x^*}} D^*(y^*).
%\end{align*}
%Consequently, 
%\begin{align*}
%x^* \in \domain(D\circ K)^* \iff \exists y^* \in \domain (D^*) \, : \, K^* y^* = x^*  \iff x^* \in K^* \domain(D^*).
%\end{align*}
%\end{proof}
\end{lem}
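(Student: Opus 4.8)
The plan is to recognize that $D \circ K$ is the composition of a convex, lower semi-continuous function $D$ with a bounded linear operator $K$, so that the asserted formula is precisely the Fenchel duality identity for such compositions. This is exactly the situation already exploited in the proof of Proposition \ref{lbl:J_star}, and I would proceed analogously, invoking \cite[Theorem 19]{Rockafellar74} with the bounded operator $K$ now playing the role of the (there unbounded) gradient operator $\Lambda$. Since $K$ is bounded, its adjoint $K^*:S^* \to L^q(\Omega)$ is everywhere defined, i.e. $\domain(K^*) = S^*$, which removes the domain-of-adjoint bookkeeping present in Proposition \ref{lbl:J_star} and makes the present situation strictly cleaner.

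First I would write out the definition $(D\circ K)^*(x^*) = \sup_{u \in L^p(\Omega)} \{(x^*,u) - D(Ku)\}$ and cast it in the form to which Theorem 19 applies. Second, I would verify the single hypothesis required by that theorem, namely that $D$ is finite and bounded above on a neighborhood of some point lying in $\range(K)$. Since $K0 = 0$ and $D$ is by assumption finite and continuous at $0$, the choice $u = 0$ witnesses this condition: $D$ is continuous, hence locally bounded, on a neighborhood of $K0 = 0 \in \range(K)$. This is the crucial constraint qualification, and it is the only place where the continuity-at-zero assumption is used. With the hypothesis in place, Theorem 19 yields directly
\[
(D\circ K)^*(x^*) = \min\{\, D^*(s^*) : s^* \in S^*,\ K^*s^* = x^* \,\},
\]
with the minimum attained whenever the value is finite; when the feasible set $\{s^* : K^*s^* = x^*\}$ is empty, the convention $\min\emptyset := +\infty$ is consistent with $(D\circ K)^*(x^*) = +\infty$.

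Finally, the concluding identity follows by inspection of this formula: $x^* \in \domain\big((D\circ K)^*\big)$ if and only if the minimum above is finite, which holds if and only if there exists $s^* \in \domain(D^*)$ with $K^*s^* = x^*$, i.e. if and only if $x^* \in K^*\domain(D^*)$. I expect no genuine obstacle here, as the argument reuses the machinery of Proposition \ref{lbl:J_star}; the only points demanding care are matching the present setup to the hypotheses of \cite[Theorem 19]{Rockafellar74}, correctly identifying the adjoint $K^*$, and confirming attainment of the minimum. Because $K$ is bounded rather than merely densely defined and closed, each of these is more transparent than in the gradient case treated earlier.
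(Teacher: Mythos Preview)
Your proposal is correct and matches the paper's approach: the paper simply records that the lemma ``follows from \cite[Theorem 19]{Rockafellar74}'', and your argument spells out exactly this application, using the continuity and finiteness of $D$ at $0 = K0$ as the required constraint qualification. The observation that this is a cleaner instance of the same machinery used in Proposition \ref{lbl:J_star} is apt.
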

We will also need the following generalization of an orthonormal decomposition of $L^q(\Omega)$ (see for instance \cite[Corollary 6.1]{Holler_ictv}): For $q \in (1,\infty) $, we denote the space of constant functions in $L^q(\Omega)$ by $\ker(\nabla)$. Then, with 
\[\ker(\nabla)^\perp := \{ u \in L^q(\Omega)  : (v,u) = 0  \text{ for all } v \in L^p(\Omega), \text{ with } v \text{ a constant function} \} ,\] 
we get that $\ker(\nabla)^\perp$ is closed, $L^q(\Omega) = \ker(\nabla) + \ker(\nabla)^\perp$ and $\ker(\nabla) \cap \ker(\nabla)^\perp = \{ 0\}$. In particular, we denote by $P_{\ker(\nabla)}:L^q(\Omega) \rightarrow \ker(\nabla)$ the continuous linear projection such that $\range(P_{\ker(\nabla)}) = \ker(\nabla)$ and $\ker(P_{\ker(\nabla)}) = \ker(\nabla)^\perp$.

Having these prerequisites at hand, we now arrive at the main duality result of the paper.

\begin{thm}\label{prop:general_duality_result}
Let $p\in (1,\ds]$, $\lambda > 0$, $S$ a Banach space, and $K \in \mathcal{L}(L^p(\Omega) ,S)$. Further let $D:S \rightarrow \overline{\RR}$ be convex, lower semi-continuous, and continuous and finite at zero.
Also, assume that at least one of the following two conditions hold:
\begin{align*} (i) & \, \left\{ \begin{aligned} 
&\text{There exists } c>0 \text{ such that } c|z| \leq j(x,z)  \text{ for every } z\in\RR^{d} \text{ and for almost every } x\in\om \text{,} \\
&P_{\ker(\nabla)}\bigg( K^* \big[ \bigcup_{\mu \geq 0} \mu \domain((\lambda D)^* ) \big] \bigg)\text{ is a vector space and } 0 \in \domain ((\lambda D)^*).
%\\ & \nu \domain ((\lambda D)^*) \subset \domain((\lambda D)^*) \text{ for } \nu \in [0,1]
\end{aligned} \right. 
\\ (ii) &  \, \bigg\{ 
\range(K^*) \text{ is closed, } \ker(K) \text{ is finite dimensional  and } D \text{ is coercive.}
\end{align*}
Then there exists a solution to the primal problem 
\begin{equation}\label{primal_problem}
 \inf_{u\in L^p (\Omega)} J^{**}(u)  + (\lambda D\circ K)(u) \tag{P}
 \end{equation}
as well as to the predual problem
\begin{equation} \label{predual_problem}
\inf_{\substack{p\in W^q_0(\dive;\om) \\ p \in Q}}  (\lambda D\circ K)^*(\di p ) , \tag{pD}
\end{equation}
where $Q = \{g \in W^q_0(\dive;\om)\cap L^\infty(\Omega,\R^d) : \;j^\circ(x,g(x))\leq 1 \text{ for a.e. } x \in \Omega\}$, and to the saddle-point problem 
\begin{equation}\label{saddle_point_problem}
 \inf_{\substack{p \in W^q_0(\dive;\om) \\ p \in Q}} \sup_{u\in L^p (\Omega)} (\di p ,u)  -  (\lambda D\circ K)(u). \tag{sp}
 \end{equation}
Further, these problems all coincide at their optimal values and are equivalent in the sense that the pair $(p,u)$ is a solution to the saddle-point problem \eqref{saddle_point_problem} if and only if $u$ is a solution to \eqref{primal_problem} and $p$ is a solution to \eqref{predual_problem}.
\end{thm}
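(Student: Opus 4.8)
The plan is to derive everything from Fenchel--Rockafellar duality applied to the primal problem \eqref{primal_problem}, written as $\inf_{u} J^{**}(u) + (\lambda D\circ K)(u)$, exploiting that $J^{**}$ is by construction the biconjugate of $J$ with $(J^{**})^* = J^* = \mathcal{I}_{\di(Q)}$ (Corollary \ref{lbl:J_star_alpha}). First I would compute the Fenchel dual of \eqref{primal_problem}: since the conjugate of $J^{**}$ is the indicator of $\di(Q)$, the dual objective forces the dual variable into $\di(Q)$, and the even symmetry of $j$ from (J3) — hence $Q = -Q$ and $\di(Q) = -\di(Q)$ — means the admissible dual variables are exactly those of the form $\di p$ with $p \in Q$. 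Invoking Lemma \ref{lem:domain_predual_vectorspace} to rewrite $(\lambda D\circ K)^*(\di p) = \min_{K^*s^* = \di p}(\lambda D)^*(s^*)$, the dual of \eqref{primal_problem} collapses, after minimizing out the auxiliary variable $s^*$, precisely to the predual \eqref{predual_problem}. The saddle-point problem \eqref{saddle_point_problem} is then nothing but \eqref{predual_problem} with the conjugate spelled out, since $(\lambda D\circ K)^*(\di p) = \sup_{u}\{(\di p,u) - (\lambda D\circ K)(u)\}$; this already identifies \eqref{predual_problem} with \eqref{saddle_point_problem} and reduces the theorem to (a) strong duality between \eqref{primal_problem} and \eqref{predual_problem}, (b) attainment in each problem, and (c) the solution correspondence.

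Second, for strong duality I would apply \cite[Theorem 19]{Rockafellar74} (as already used for Lemma \ref{lem:domain_predual_vectorspace}), whose qualification is supplied by the hypothesis that $D$ is finite and continuous at $0$: this places $0$ in the interior of the domain of $\lambda D\circ K$, where $J^{**}$ is also finite, and guarantees both the absence of a duality gap and the existence of a dual, hence predual, optimizer. For attainment in \eqref{predual_problem} I would argue directly: continuity of $D$ at $0$ makes $(\lambda D)^*$ coercive with bounded sublevel sets, so along a minimizing sequence $(p_n)$ the representation $(\lambda D\circ K)^*(\di p_n) = \min_{K^*s^* = \di p_n}(\lambda D)^*(s^*)$ keeps the optimal $s_n^*$, and therefore $\di p_n = K^*s_n^*$, bounded in $L^q(\om)$; together with the pointwise bound $|p_n|\le C$ forced by $j^\circ(x,p_n)\le 1$ through (J2), this bounds $(p_n)$ in $W^q_0(\dive;\om)$, and weak closedness and convexity of $Q$ with weak lower semicontinuity of the objective yield a minimizer. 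Here the assumption $0 \in \domain((\lambda D)^*)$ in case $(i)$ is exactly what keeps the predual value finite, since it forces $D$ to be bounded below, a property that is automatic in case $(ii)$ from coercivity of $D$.

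Third comes attainment in \eqref{primal_problem}, the delicate point, because $J^{**}$ has only linear growth and vanishes on the constants $\ker(\nabla)$, so it is never coercive on $L^p(\om)$ on its own. In case $(i)$, the lower bound $c|z|\le j(x,z)$ gives $J^{**}(u)\ge c|\Du|(\om)$ (see Remark \ref{u_BV_finite_J}), so a minimizing sequence is bounded in $\bv(\om)$ modulo constants by the Poincar\'e--Wirtinger inequality; the constant part is then controlled via the data term, and this is precisely where the requirement that $P_{\ker(\nabla)}\big(K^*[\bigcup_{\mu\ge 0}\mu\,\domain((\lambda D)^*)]\big)$ be a vector space enters: it ensures that the only constant directions left uncontrolled by $\lambda D\circ K$ are directions of invariance of the whole objective, which can be factored out without destroying existence. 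After reducing to a bounded sequence modulo this finite-dimensional invariance subspace, I would use the compact embedding $\bv(\om)\embed L^p(\om)$ for $p<\ds$ (and weak-$*$ arguments at the endpoint $p=\ds$) together with $L^p$-lower semicontinuity of $J^{**}$ and of $\lambda D\circ K$ to pass to the limit. In case $(ii)$, coercivity of $D$ bounds $\|Ku_n\|_S$, closedness of $\range(K)$ — equivalent to that of $\range(K^*)$ by the closed range theorem — bounds the distance of $u_n$ to $\ker(K)$, and finite-dimensionality of $\ker(K)$ lets me extract a convergent subsequence in the kernel directions, after which weak $L^p$-compactness and joint lower semicontinuity again deliver a minimizer.

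Finally, I would read off the solution correspondence from the Fenchel extremality relations attached to the established strong duality: a pair is optimal exactly when $-K^*s^*\in\partial J^{**}(u)$ and $s^*\in\partial(\lambda D)(Ku)$, equivalently $\di p\in\partial(\lambda D\circ K)(u)$ with $p\in Q$ and $J^{**}(u) = (\di p,u)$, which is precisely the subdifferential characterization of Proposition \ref{lbl:subdiff_Jstarstar}. Translating, $(p,u)$ is a saddle point of \eqref{saddle_point_problem} if and only if $u$ attains the inner supremum (the Fenchel equality for the fidelity term) and $p$ minimizes \eqref{predual_problem}, and by the absence of a duality gap these conditions are equivalent to $u$ solving \eqref{primal_problem} and $p$ solving \eqref{predual_problem}. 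I expect the main obstacle to be the third step under the weak hypotheses $(i)$/$(ii)$: because $J^{**}$ is blind to constants and only linearly coercive, establishing boundedness of minimizing sequences forces the careful kernel/constant bookkeeping and the vector-space qualification in case $(i)$, and the closed-range compactness argument in case $(ii)$, with the orientation of $Q$ from (J3) and the sign conventions in the dualization as the accompanying technical nuisance.
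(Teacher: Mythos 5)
Your architecture is genuinely different from the paper's. The paper runs Fenchel--Rockafellar in the opposite direction: it takes the \emph{predual} as the primal of the duality scheme ($F=\I_Q$, $G=(\lambda D\circ K)^*$, $T=\di$) and invokes the Attouch--Brezis theorem, so that existence for \eqref{primal_problem} falls out of the duality theorem itself once the qualification set $U=\bigcup_{\mu\ge 0}\mu\big[\domain((\lambda D\circ K)^*)-\di(Q)\big]$ is shown to be a closed vector space --- and verifying \emph{that} is where the entire content of assumptions $(i)$/$(ii)$ is spent. Your direction of dualization buys strong duality and predual attainment cheaply (continuity of $\lambda D\circ K$ at $0$ where $J^{**}$ is finite is a valid qualification, and your passage from the attained dual maximizer $s^*\in S^*$ to a predual minimizer $p\in Q$ via Lemma \ref{lem:domain_predual_vectorspace} and the symmetry $\di(Q)=-\di(Q)$ from (J3) is sound), and your identification of \eqref{predual_problem} with \eqref{saddle_point_problem} and the extremality-relation bookkeeping at the end are fine. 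The price is that the whole burden of $(i)$/$(ii)$ lands on a direct-method existence proof for \eqref{primal_problem}, which the paper never has to write down.

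That is where the gap is, specifically in case $(ii)$. Coercivity of $D$ and the closed-range/open-mapping estimate give you $u_n=v_n+k_n$ with $(v_n)$ bounded in $L^p(\om)$ and $k_n\in\ker(K)$, but nothing you have said bounds $k_n$: in case $(ii)$ there is \emph{no} coercivity assumption on $j$, so $J^{**}$ may vanish on a nontrivial subspace and the kernel components of a minimizing sequence can escape to infinity while the objective stays bounded. The step ``finite-dimensionality of $\ker(K)$ lets me extract a convergent subsequence in the kernel directions'' therefore fails as stated --- compactness of the unit ball of $\ker(K)$ is useless without boundedness of $(k_n)$. The repair needs an extra idea: if $\|k_n\|\to\infty$, then by positive $1$-homogeneity and lower semicontinuity of $J^{**}$ every limit direction $e$ of $k_n/\|k_n\|$ satisfies $J^{**}(e)=0$; using (J3) (evenness), convexity and homogeneity one checks that $N:=\{J^{**}=0\}$ is a subspace along which $J^{**}$ is translation-invariant, hence the full objective is invariant along $\ker(K)\cap N$; quotienting this finite-dimensional invariance subspace out of the minimizing sequence, a second blow-up argument shows the remaining kernel components are bounded, and only then does your compactness step close. (Case $(i)$ is essentially fine as sketched: the dichotomy $P_{\ker(\nabla)}\big(K^*[\bigcup_\mu\mu\domain((\lambda D)^*)]\big)\in\{\{0\},\ker(\nabla)\}$ gives either constant-invariance of the objective via $(\lambda D\circ K)=(\lambda D\circ K)^{**}$ or two-sided control of the mean, and Poincar\'e--Wirtinger does the rest.)
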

Before we provide the proof, let us motivate the rather technical assumptions by showing that they exactly reduce to the setting of Proposition \ref{prop:duality_norm_discrepancy} if the discrepancy term $D$ is a norm: In this setting, we have $D(v) = \|v-f\|_S$, which is obviously finite and continuous at zero, and it is coercive in $S$. Furthermore, from \cite[Theorem 4.4.10]{Borwein}, coercivity (i.e., the first ingredient of assumption $(i)$) implies that $B_\epsilon (0) \subset \domain((\lambda D)^ *)$  for some $\epsilon >0$. Hence the union over all positive factors times $\domain((\lambda D)^ *)$ is all of $S^ *$ and since $P_{\ker(\nabla)}$ is linear, the second part of $(i)$ also holds. Thus, the remaining assumptions correspond exactly to the assumptions of Proposition \ref{prop:duality_norm_discrepancy}. We remark that the rather technical assumptions above allow to also cover settings like the one for structural TV-regularized PET reconstruction with positivity constraints.
\begin{proof}[Proof of Theorem \ref{prop:general_duality_result}]
We first show strict duality. Note that convex conjugation is always carried out in the space where the functional is defined. Define $X = W^q_0(\dive;\om) $, $Y = L^q(\Omega) $, the bounded linear operator
$ T:X \rightarrow Y$, $Tp = \di p$, and the functionals
$F:X \rightarrow [0,\infty]$, $F(p) = \I_{Q}(p)$,
and $G:Y \rightarrow [0,\infty]$, $G(u^{\ast}) = (\lambda D\circ K)^ * (u^{\ast})$.
%Note that $\I^*_{(-\infty,-M]}(v) =  -M\|v\|_1 + \I_{\geq 0} (v)$
Our goal is to show the following duality relation, which also asserts existence of a minimum for the right-hand side:
\[ \inf_{x\in X} F(x) + G(Tx) = -\min _{y \in Y^*} F^*(-T^*y) + G^*(y).\]
%In concrete, we then get
%\[ \inf_{x\in X} F(x) + G(Tx) = \inf _{p \in W^q_0(\dive;\om)} \I_Q(p)  +  (\lambda D\circ K)^ *(\di p ),  \]
%and
For the right-hand side we get
\begin{align*}
 -\min _{y \in Y^*} F^*(-T^*y) + G^*(y) 
 & = -\min _{u \in L^p(\Omega)} \I^*_Q( -(\di )^* u) + (\lambda D\circ K)(u)  \\
 & = -\min _{u \in L^p(\Omega)} \sup _{z \in W^q_0(\dive;\om)} \{( - \di z,u) - \I_Q(z)\}  + (\lambda D\circ K)(u) \\
 & = - \min _{u \in L^p(\Omega)} J^{**}(u)   + (\lambda D\circ K)(u).
 \end{align*}
To show the duality relation, according to \cite{Attouch86}, it suffices to show that
\begin{equation} \label{eq:duality_condition}
U:= \bigcup _{\mu \geq 0} \mu \bigg[ \domain((\lambda D\circ K){^\ast}) - T\domain(F)\bigg]  =  \bigcup _{\mu \geq 0} \mu \bigg[
K^*\domain( (\lambda D)^*)  - \di(Q) 
\bigg]
 \subset L^q(\Omega)
\end{equation}
is a closed vector space.  Note that the second equality holds true due to Lemma \ref{lem:domain_predual_vectorspace}.
Now first consider the case that assumption $(i)$ holds. We claim that in this case
\[ U =  P_{\ker(\nabla)}\bigg( K^* \big[ \bigcup_{\mu \geq 0} \mu \domain((\lambda D)^* ) \big] \bigg) + \ker(\nabla)^\perp \]
and hence, by assumption $(i)$, $U$ is  a closed vector space for being the sum of a finite dimensional and a closed vector space. It is clear that $U$ is a subset of the right-hand side since  %\todo{I felt that a bit more explanation was needed here, at least to convince myself, might remove it if it's too long...}
%any $u = \mu K^*$e, 
\begin{align*}
\bigcup _{\mu \geq 0} \mu \bigg[K^*\domain( (\lambda D)^*)  -  \di(Q)\bigg]
&\subset \bigcup_{\mu\geq 0} \mu K^*\domain( (\lambda D)^*)- \bigcup_{\mu\geq 0} \mu \dive(Q)\\
&\subset K^* \big[ \bigcup_{\mu \geq 0} \mu \domain((\lambda D)^* ) \big]+\ker(\nabla)^\perp\\
& \subset P_{\ker(\nabla)}\bigg( K^* \big[ \bigcup_{\mu \geq 0} \mu \domain((\lambda D)^* ) \big] \bigg)
\\ 
&\quad +P_{\ker(\nabla)^\bot}\bigg( K^* \big[ \bigcup_{\mu \geq 0} \mu \domain((\lambda D)^* ) \big] \bigg) +\ker(\nabla)^\perp\\
&\subset P_{\ker(\nabla)}\bigg( K^* \big[ \bigcup_{\mu \geq 0} \mu \domain((\lambda D)^* ) \big] \bigg) + \ker(\nabla)^\perp.
\end{align*}
%so we need to show the other subset inclusion.

For the reverse inclusion, we employ a result of \cite{Holler_ictv} which states that for any $\delta > 0$ there is an $\epsilon >0$ such that $B_\epsilon(0) \cap \ker(\nabla)^\perp \subset \{ \di g : g \in W^q_0(\dive;\om), \, |g(x)| \leq \delta \text{ for a.e. }x\in\om\}$. In fact, this was shown in \cite{Holler_ictv} for $p=\ds$, but the extension to $1<p<\ds$ is direct. 

Take any $u =  P_{\ker(\nabla)}(\mu K^*v) + w $ with $\mu \geq 0$, $v \in \domain((\lambda D)^*)$ and $w \in \ker(\nabla)^\perp$. We re-write $u = \mu K^*v  - ( P_{\ker(\nabla)^\perp}(\mu K^*v) - w) = \mu K^*v - \tilde{w}$ with $\tilde{w}\in \ker(\nabla)^\perp$. 
By assumption $(i)$, we get that $J^*(u^*) \leq  c \TV^*(c^{-1}u^*)$ for all $u^* \in L^q(\Omega)$, which means that if $c^{-1}u^*\in \domain(\TV^*)$, i.e., $u^* = \di g_1$ with $\|g_1\|_{L^\infty(\om)} \leq c$ then $u^* \in \domain (J^*) = \dive (Q)$, i.e., $u^{\ast} = \di g_2 $ with $j^\circ(x,g_2(x))\leq 1$. Taking hence $\epsilon $ sufficiently small such that $\epsilon \tilde{w} = \di g$ with $\|g\|_{L^\infty(\om)} \leq c$, we get that $\epsilon \tilde{w}  \in \dive (Q)$. Now define $\eta = \min \{ 1/\mu,\epsilon\}$. Then $\eta \mu v \in \domain((\lambda D)^*)$ by convexity of $(\lambda D)^*$ and assumption $(i)$. Consequently we get
\[ u = \frac{1}{\eta} ( \eta \mu K^*v - \eta \tilde{w}) \in U, \]
as claimed.

Before considering condition $(ii)$, we show that $V:  = \bigcup _{\mu \geq 0}  - \mu \dive (Q)$ is a vector space. To see this, take $\lambda \in \R$ and  $y_1 := -\mu _1T(z_1)$ and $y_2:=-\mu _2T(z_2)$ with $z_1,z_2 \in Q$ and $\mu_1,\mu_2 \geq 0$. We show that $\lambda y_1 + y_2 \in V$. We note that, by  definition, $Q$ is convex and also $\lambda z \in Q$ for $z \in Q$ and $\lambda \in [0,1]$. Further, from assumption (J3) on $j$, one can check that $-z \in Q$ for $z \in Q$. 
%\todo{this requires an additional assumption on j, right?} 
Define $\eta = \max\{\mu_1,\mu_2\} \max\{|\lambda|,1\}$. Then $\frac{\lambda\mu_1 z_1}{\eta} \in Q$ and $\frac{\mu_2 z_2}{\eta} \in Q$. Hence by convexity of $Q$, we have
\[ \lambda y_1 + y_2 = -(\lambda \mu_1 Tz_1 + \mu_2 T z_2) = 2\eta \left(- T\left(\frac{\lambda \mu_1z_1 + \mu_2 z_2}{2\eta}\right)\right) \in V .\]

Suppose now condition $(ii)$ holds. 
We will show that in this case $U$ can be written as a sum of $\ker(K)^{\perp}$ and a finite dimensional space.
Indeed, first note that, following \cite[Section 2.4, Example 2]{Brezis}, $\ker(K)^{\perp}$ is of finite codimension and, consequently, 
%
%Consider $(u_{i})_{i=1}^{n_{0}}$ a basis for $\ker (K)$. Then we have
%\begin{align}
%\ker(K)^{\perp}&= \left \{ w\in L^{q}(\om) \st (v,u)=0,\;\forall u\in \ker (K) \right\}\nonumber\\
%			   &=  \left \{ w\in L^{q}(\om) \st (v,u_{i})=0,\;i=1,\ldots,n_{0} \right\}\nonumber\\
%			   &=\bigcap_{i=1}^{n_{0}}\ker u_{i},	\label{finite_codim}
%\end{align}
%where each $u_{i}$ is seen here as an element of $(L^{q}(\om))^{\ast}$. Since every $\ker u_{i}$ is of codimension one, it follows from \eqref{finite_codim} that $\ker(K)^{\perp}$ is of finite codimension. Since $\ker(K)^{\perp}$ is also closed, then see \cite[Section 2.4]{Brezis}
there exists a finite dimensional space $\tilde{U}\subset L^q(\Omega)$ such that $L^{q}(\om)$ is the direct sum of  $\ker(K)^{\perp}$  and $\tilde{U}$. Thus we can define the canonical continuous linear projection onto $\tilde{U}$, $P_{\tilde{U}}: L^{q}(\om) \to \tilde{U}$.
We claim now that 
\[ U = \ker(K)^\perp + P_{\tilde{U}} (V),\]
which is again a closed vector space for being the sum of a closed and a finite dimensional vector space. First we note that by the close range theorem we have $\ker(K) ^\perp = \range(K^*)$, and hence  $U$ is included in the right-hand side since
\begin{align*}
U&\subset \range(K^*) + V\subset \range(K^*)+P_{\ker(K)^{\perp}} (V)+ P_{\tilde{U}} (V)=  \ker(K)^\perp + P_{\tilde{U}} (V).
\end{align*}
To show the other subset inclusion, take $u = K^*v + P_{ \tilde{U}}(-\mu w) $ with $w \in \dive(Q)$. Again we re-write $u = K^* v - P_{\ker(K)^\perp}(-\mu w) - \mu w = K^* \tilde{v} - \mu w$ with $\tilde{v}\in S^{\ast}$. Again by \cite[Theorem 4.4.10]{Borwein}, coercivity of $\lambda D$ implies continuity of $(\lambda D)^*$ at $0$. Hence there exists $\epsilon > 0 $ such that $\overline{B_\epsilon (0)} \subset \domain ((\lambda D)^*)$. Setting $\lambda = \min \{ \epsilon/\|\tilde{v}\|,1/\mu\}$ we can write
\[ u = \frac{1}{\lambda} (K^*( \lambda \tilde{v}) - \lambda \mu w) \in U\]
since $\eta w \in \dive (Q)$ for $\eta \in [0,1]$. This shows strict duality and existence for \eqref{primal_problem} under assumption $(i)$ or $(ii)$. 

%Now we assume in addition that $D$ is bounded in a neighborhood of $0$. Since $D$ is is also convex, it is continuous around $0$ \cite[Proposition 4.1.4]{Borwein},
%Now we assume that $D$ is continuous
Now we show existence for the predual problem.
From the continuity of $D$ at $0$ 
 and since $K$ is bounded, $\lambda D \circ K$ is continuous at $0$ and again by \cite[Theorem 4.4.10]{Borwein} $(\lambda D \circ K)^*$ is coercive in $L^q(\om)$.
We show that the set $Q$ is bounded with respect to the $L^q$-norm. For this purpose, we define $C=2\gamma >0$, where $\gamma$ is the constant from assumption (J2), and observe that, for any $g \in Q$ and almost every $x \in \Omega$,
\[ C^{-1}|g(x)| =  \sup _{ \substack{ \tilde{z}^*\in \R^d \\ |\tilde{z}^*| \leq (C/\gamma) - 1 }} C^{-1}g(x) \cdot \tilde{z}^* \overset{(*)}{\leq} \sup _{ \substack{ \tilde{z}^*\in \R^d \\ j(x,\tilde{z}^*) \leq C }} g(x) \cdot C^{-1}\tilde{z}^*  = \sup _{ \substack{ z^*\in \R^d \\ j(x,z^*) \leq 1 }} g(x) \cdot z^* = j^\circ (x,g(x))  \leq 1.\]
Indeed, since by (J2), $j(x,z) \leq \gamma (1 +|z|)$, $(*)$ holds true for $C>\gamma$ since then, for $|z^*|\leq (C/\gamma) - 1$, $j(x,z^*) \leq \gamma(1+|z^*|) \leq C$. This implies that $\|g\|_{L^\infty(\om)} \leq C$ and hence also $Q$ is bounded in $L^q(\om)$.

Now taking $(p_n)_{n\in\NN}$ an infimizing sequence for the predual problem, we get by boundedness of $Q$ and coercivity of $(\lambda D \circ K)^*$ that there exist $\hat{p} \in L^q(\Omega,\R^d) $ and $w \in L^q(\Omega)$ such that, up to subsequences, $p_n \rightharpoonup \hat{p}$ and $\di(p_n) \rightharpoonup w $. This implies that $p \in W^q(\dive;\om)$ and $\di p = w$. Further we note that $\{ (g,\di g) : g \in W_0^q(\dive;\Omega) \cap Q \}$ is a convex and closed subset of $L^q(\Omega,\R^{d+1})$, hence it is also weakly closed. By weak convergence of $(p_n,\di p_n) $ to $(\hat{p},\dive \hat{p})$ and lower semi-continuity of $(\lambda D \circ K)$ with respect to weak convergence in $L^q(\Omega,\R^d) $ it follows that $\hat{p}$ is a solution to \eqref{predual_problem}.

Finally, \cite[Proposition III.3.1]{Ekeland} guarantees equivalence to the saddle-point problem \eqref{saddle_point_problem} as claimed and the proof is complete.
\end{proof}

%\begin{rem}\label{lbl:remark_J3}
%We note that in the proof above, it is the first and only time that we use the symmetry assumption (J3) on $j$. However, one should not think (J3) as a technical assumption since it is quite natural to assume that positive and negative variations are penalized in the same way.
%\end{rem}

This concludes the main existence and duality result of the paper, from which the application to linear inverse problems with norm discrepancy follows as a special case. A second situation we want to consider in more detail is an inverse problem where the measured data describes the physical density of some quantity and is corrupted by Poisson noise. The main application we have in mind for this setting is PET image reconstruction, where the Poisson log-likelihood and positivity constraints are used for data fidelity.

\subsection{A Poisson noise model.} We are now interested in the problem
\begin{equation}\label{eq:min_prob_pet_formal}
\min _{u \in L^p(\Omega) } J^{**}(u) + \lambda \int _\Sigma (Ku) (\sigma)- f(\sigma) \log ((Ku)(\sigma) + c_0(\sigma))\,d\sigma + \I_{[0,\infty)} (u),
\end{equation}
where $K$ is a linear operator (a slightly modified Radon transform), $f\geq 0$ is the given data, $c_0>0$ is an estimate for measurements due to scattering and random events and $\Sigma$ is a subset of $\R^{n_d}$ with some $n_d \geq 1$. The function $\I_{[0,\infty)}$ constrains the unknown to the non-negative reals. Note that in PET imaging with real data, the estimate $c_0$ of scatter and random events is an integral part of the (so-called) reconstruction pipeline, as it describes a non-negligible part of the data. Such an estimate is typically delivered by the scanner software as preprocessing step.

More specifically, in what follows we invoke the following data assumptions:
\begin{equation} \label{AP}\tag{AP} 
\left\{\begin{array}{l@{\,}l} (i) & \operatorname{meas}(\Sigma) <+\infty,\quad f,c_0 \in L^ 1(\Sigma),\,\, f \geq 0, \,\, c_0 > 0,\\[1ex]
                       (ii) & \sigma \mapsto f(\sigma) \log(c_0(\sigma)) \in L^1(\Sigma),\,\,  \sigma  \mapsto \frac{f(\sigma)}{c_0(\sigma)} \in 			      L^{\infty}(\Sigma),\\[1ex]
                       (iii) &K \in \mathcal{L}(L^ p(\Omega),L^ 1(\Sigma)), \quad Ku \geq 0 \text{ whenever } u \geq 0,\\[1ex]
 		      (iv) &\text{the constant functions are contained in } \range(K^*).
\end{array}\right.
\end{equation} 
When reading the above integrability assumptions on the data and the scatter and random events, one has to keep in mind that typically $f$ describes the Radon transform of some density which is supported in the interior of $\Omega$ and $c_0$ some defects that increase with the density, but are present on every measurement line. We believe that in such a context, the posed assumptions are realistic and not very restrictive. Clearly, they are satisfied if we assume $f$ and $c_0$ to be bounded and $c_0$ to be uniformly bounded away from zero, which is often assumed in the literature for the measured data in order to obtain stability results \cite{sawatzky2013tv}. In this respect one has to keep in mind, however, that the Radon transform of any function which is bounded above will converge to zero for measurement lines whose length converges to zero. Hence we believe that there is a benefit in using the assumption \eqref{AP} rather than uniform boundedness away from zero.

Regarding the assumptions on the forward operator $K$, we recall in the following some basic properties of the Radon transform, which in particular show that for such a $K$ assumption \eqref{AP} is fulfilled. In fact, the classical Radon transform $R$ is a bounded linear operator from $L^{1}(\RR^{d})$ to $L^{1}(\mathcal{S}^{d-1}\times \RR)$; see for instance \cite{oberlin1982mapping}. Here $\mathcal{S}^{d-1}\times \RR$ is equipped with the measure $\mu:=\mathcal{H}^{d-1}\lfloor \mathcal{S}^{d-1}\times \mathcal{L}$. In this context, $d\sigma$ denotes integration with respect the measure $\mu$. Define now the bounded linear operator $E: L^{p}(\Omega)\to L^{1}(\RR^{d})$ as the extension-by-zero outside $\Omega$. Then, if $K:=R\circ E$ we have that $K$ is linear, it maps $L^{p}(\om)$ to $L^{1}(\Sigma)$, with $\Sigma$ a bounded subset of $\mathcal{S}^{d-1}\times \RR$, and it is bounded, since for every $u\in L^{p}(\om)$
\[\|Ku\|_{L^{1}(\Sigma)}=\|(R\circ E)(u)\|_{L^{1}(\Sigma)}\le C\|Eu\|_{L^{1}(\RR^{d})}= C\|u\|_{L^{1}(\om)} \leq \tilde{C} \|u\|_{L^{p}(\om)}.\]
According to \cite{hertle1983continuity}, the adjoint $R^{\ast}:L^{\infty}(\Sigma)\to L^{\infty}(\RR^{d})$ of the Radon transform is given by 
\[R^{\ast}v(x)=\int_{\mathcal{S}^{d-1}} v(\theta, \theta\cdot x) \,d\mathcal{H}^{d-1}(\theta),\quad x\in \RR^{d}.\]
One can easily see now that $K^{\ast}: L^{\infty}(\Sigma)\to L^{q}(\om)$ is simply the restriction of $R^{\ast}$ in $\om$, since for every $u\in L^{p}(\om)$ and every $v\in L^{\infty}(\Sigma)$ we have
\[\int_{\om} u R^{\ast} v\, dx=\int_{\RR^{d}}  Eu R^{\ast}v\, dx= \int_{\Sigma} R (Eu) v \,d\sigma=\int_{\Sigma} (Ku) v\, d\sigma. \]
In this case, the constant functions belong to $\mathrm{Rg}(K^{\ast})$.
Moreover, from the definition of the Radon transform it follows immediately that $Ku\ge 0$ for $u\ge 0$ with both inequalities to be understood in the almost everywhere sense. Hence, assumption \eqref{AP} holds true when $K$ is the Radon transform. Note also that the assumption on $\Sigma$ is valid since due to $\om$ being bounded, for every $u\in L^{1}(\om)$, we have that $Ku$ is supported in a fixed compact subset of $\Sigma$.

Now considering the formal minimization problem for Poisson-corrupted data \eqref{eq:min_prob_pet_formal} in view of our general duality result, some issues arise. First of all we need to rigorously define the data discrepancy term as a function from $L^ p(\Omega)$ to the extended reals, and secondly neither the data discrepancy nor the positivity constraint will be continuous in $L^ p(\Omega)$ around $0$. 

The following modification of the data term resolves some of these issues without changing the original problem.
For $f \in [0,\infty)$ and $c_0 \in (0,\infty)$ we define the integrand
\[ l_{f,c_0}(t):= 
\begin{cases} t - f \log (t+c_0) &\text{if } t \geq 0, \\
 -f \log (c_0) + \big(1 - \frac{f}{c_0}\big) t &\text{if } t<0, \end{cases}
 \]
and for $f:\Sigma \rightarrow  [0,\infty)$ and $c_0: \Sigma \rightarrow (0,\infty)$ the functional 
\[
 \dkl :  L^ 1 (\Sigma)  \rightarrow \overline{\RR}, \quad
 v \mapsto \int_\Sigma l_{f(\sigma),c_0(\sigma)}(v(\sigma)) \,d\sigma + L\|v_-\|_{L^{1}(\Sigma)},
\]
where we set $L := \|1-\frac{f}{c_0}\|_\infty + 1$ and $v_- := \min \{v,0\}$ in a pointwise almost everywhere sense.
The point in the definition of $\dkl $ is that we change the original data fidelity only in points where $Ku$ is negative, which, however, can never occur due to the positivity constraint in $u$ and $Ku \geq 0 $ for $u \geq 0$. Hence, when considering the minimization problem
\begin{equation} \label{eq:pet_original_problem}
\inf_{u\in L^ p(\Omega)} J^ {**}(u) + \lambda \dkl(Ku)  + \I_{[0,\infty)}(u),
\end{equation}
it is immediate that the set of optimal solutions is exactly the same as for \eqref{eq:min_prob_pet_formal}.
The modified fidelity $\dkl$ enjoys the following properties.
\begin{lem} \label{lem:kl_properties} Assume that \eqref{AP} holds. Then $\dkl$ is convex and continuous in $L^1(\Sigma)$. Further 
there exist constants $M,N>0$ such that
\[ \|v\|_{L^1(\Sigma)} \leq M \dkl(v) + N \quad \text{for all } v \in L^1(\Sigma).\]

\begin{proof}
Regarding convexity, we note that $v \mapsto \|v_-\|_{L^1(\Sigma)}$ is convex and hence it suffices to show convexity of the integrand $z \mapsto l_{f(\sigma),c_0(\sigma)}(z)$ for $\sigma \in \Sigma $ fixed. To this aim, we note that convexity is equivalent to the derivative of $l_{f(\sigma),c_0(\sigma)}$ being monotonously increasing. The latter is indeed true since for $z>0$ and $z<0$ the integrand is convex and since, as can be readily checked, the left and right derivative of $l_{f(\sigma),c_0(\sigma)}(\cdot)$ at $z=0$ coincide.

%, which we denote by $l$. To this aim, we note decompose $l$ into $l_1(y): = y - f(x) \log (y+c_0(x))$ and $l_2(y):= -f(x) \log (c_0(x)) + (1 - \frac{f(x)}{c_0(x)}) y $ for positive and negative arguments, respectively. We note that both $l_1$ and $l_2$ are convex and differentiable and by design $l_1^\prime (0) = l_2^\prime(0)$. For convexity we show that for any $y,z \in \R$, 
%\[ l(y) \geq l^\prime(z) (y-z) + l(z) .\]
%If both $z$ and $y$ have the same sign, this follows from convexity of $l_1$ and $l_2$. In case $y \leq 0 \leq z$ we get
%\[ l(y) \geq l_1^\prime(0)(y-0) + l_1(0) \geq l_1^\prime(z) (y-0) + l_1(0) 
% \geq l_1^\prime(z) (y-z) + l_1^\prime(z) (z-0)+ l_1(0) 
% \geq  l^\prime(z) (y-z) + l(z) \]
% and the other case follows similarly. \todo{convexity is trivial via monotinicity of the derivative}
 
Regarding continuity, for $v \in L^1(\Sigma)$ we denote $\Sigma_1 = \{ \sigma \in \Sigma : v(\sigma) \geq 0 \}$, $\Sigma_2 = \Sigma \setminus \Sigma_1$ and estimate
\begin{align*}
 \dkl(v) 
 & \leq  \int _{\Sigma_1} v(\sigma) - f(\sigma)\log (c_0(\sigma)) \,d\sigma   + \int _{\Sigma_2}  - f(\sigma)\log ( c_0(\sigma))  + \left(1-\frac{f(\sigma)}{c_0(\sigma)}\right) v(\sigma) \,d\sigma + L \|v_-\|_{L^{1}(\Sigma)}    \\
 & \leq  L\|v \|_{L^{1}(\Sigma)} +  \|f\log(c_0)\|_{L^{1}(\Sigma)}  + \left(1 + \Big\|\frac{f}{c_0}\Big\|_{\infty}\right) \|v_- \|_{L^{1}(\Sigma)}  \\
 &\leq  \left(1 +  \Big\|\frac{f}{c_0}\Big\|_{\infty } + L \right) \|v\|_{L^{1}(\Sigma)}  + \|f\log(c_0)\|_{L^{1}(\Sigma)}  .
 \end{align*}
 Hence $\dkl$ is bounded above in a neighborhood of any point and, for being a convex function \cite[Prop. 4.1.4]{Borwein} it is also continuous at any point.

To show the coercivity estimate, first pick any $v \in L^ 1(\Sigma)$ with $v \leq 0$. Choosing $c =\| 1- \frac{f}{c_0}\|_\infty$ we get
\begin{equation} \label{eq:log_estimate_neg}
\begin{aligned}
 \int _\Sigma -f \log (c_0) + \left(1 - \frac{f}{c_0}\right) v\,d\sigma + L\|v\|_{L^{1}(\Sigma)}  
 & \geq -\|f \log(c_0)\|_{L^{1}(\Sigma)}  - c \|v\|_{L^{1}(\Sigma)} + L \|v\|_{L^{1}(\Sigma)}\\ 
  & = -\|f \log(c_0)\|_{L^{1}(\Sigma)} +  \|v\|_{L^{1}(\Sigma)}  .
 \end{aligned}
 \end{equation}

Further, for $v \in L^1(\Sigma)$ with $v \geq 0$ the Poisson log-likelihood can be estimated below as follows \cite{Borwein91, Resmerita07}:
It is easy to check by differentiating that the function
\[ t \mapsto \left( \frac{4}{3} + \frac{2t}{3} \right) (t \log( t) - t+1) - (t-1)^2,\quad t\ge 0, \] is convex and attains its minimum value of $0$ at $t=1$. Setting $t = \frac{f(\sigma)}{v(\sigma)+c_0(\sigma)}$ we get that
\begin{align*}
 (v(\sigma)+c_0(\sigma) - f(\sigma))^2 
 &\leq \left[ \frac{2}{3}f(\sigma) + \frac{4}{3}(v(\sigma)+c_0(\sigma)) \right] \bigg(f(\sigma)\log(\frac{f(\sigma)}{v(\sigma)+c_0(\sigma)}) - f(\sigma) + v(\sigma)+c_0(\sigma)\bigg) \\
 &=  \left[ \frac{2}{3}f(\sigma) + \frac{4}{3}(v(\sigma)+c_0(\sigma)) \right] \bigg( v(\sigma) - f(\sigma) \log(v(\sigma)+c_0(\sigma)) \\
 & \qquad + c_0(\sigma) - f(\sigma) +f(\sigma)\log(f(\sigma)\bigg).
 \end{align*}
Now taking the square root on both sides, integrating and using the Cauchy-Schwarz inequality we get that
\[ \|v+c_0 - f\|_{L^{1}(\Sigma)} ^2 \leq \left[ \frac{2}{3}\|f\|_{L^{1}(\Sigma)} + \frac{4}{3}\|v+c_0\|_{L^{1}(\Sigma)} \right]\bigg( \int_\Sigma  v - f \log(v+c_0)\,d\sigma +\int_\Sigma   c_0 - f +f\log(f)\,d\sigma \bigg) . \]
Further, using that $\|v+c_0\|_{L^{1}(\Sigma)}^{2} \leq 2 \|v+c_0 - f \|_{L^{1}(\Sigma)}^2 + 2 \|f\|_{L^{1}(\Sigma)}^2 $ and denoting by $M,N$ generic constants with $M>0$ we get
\begin{align*}
\|v+c_0\|_{L^{1}(\Sigma)} \bigg( \|v+c_0\|_{L^{1}(\Sigma)} - \frac{8}{3} \left( \int_\Sigma  v - f \log(v+c_0)\,d\sigma\right) - N \bigg) \leq  M \left( \int_\Sigma  v - f \log(v+c_0)\,d\sigma \right) + N.
\end{align*}
Now in case $\bigg( \|v+c_0\|_{L^{1}(\Sigma)} - \frac{8}{3} \big( \int_\Sigma  v - f \log(v+c_0)\,d\sigma\big) - N \bigg)  \leq 1$ we get
\[ \|v+c_0\|_{L^{1}(\Sigma)}  \leq  \frac{8}{3} \left( \int_\Sigma  u - f \log(u+c_0)\,d\sigma\right) + N +1. \] In the other case we get
\[ \|v+c_0\|_{L^{1}(\Sigma)}  \leq  M \left( \int_\Sigma  v - f \log(v+c_0)\,d\sigma \right) + N.\]
Hence in any case there exists constants $M>0$, $N \in \R$ such that
\begin{equation} \label{eq:log_estimate_pos}
\|v\|_{L^{1}(\Sigma)}  \leq  M \left( \int_\Sigma  v - f \log(v+c_0)\,d\sigma \right) + N.
\end{equation}
Now splitting an arbitrary $v \in L^ 1(\Sigma)$ into its positive and negative parts, and using the estimates \eqref{eq:log_estimate_neg}, \eqref{eq:log_estimate_pos} accordingly, we get that there exists $M>0$, $N \in \R$ such that
\[ \|v\|_{L^{1}(\Sigma)}  \leq  M \dkl(v) + N, \]
which completes the proof.
\end{proof}
 \end{lem}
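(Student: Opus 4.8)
The plan is to establish the three assertions in turn, treating convexity and continuity as preliminaries and devoting the bulk of the effort to the coercivity estimate, which I expect to be the genuine obstacle.

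For convexity I would split $\dkl$ into the integral functional $v \mapsto \int_\Sigma l_{f(\sigma),c_0(\sigma)}(v(\sigma))\,d\sigma$ and the penalty $v \mapsto L\|v_-\|_{L^1(\Sigma)}$. The penalty is convex because $\|v_-\|_{L^1(\Sigma)} = \int_\Sigma \max\{-v(\sigma),0\}\,d\sigma$ is the integral of a convex function of $v$. For the integral functional it suffices, arguing pointwise, to check that $t \mapsto l_{f,c_0}(t)$ is convex for each fixed $f\ge 0$ and $c_0>0$: on $\{t\ge 0\}$ this follows from $l_{f,c_0}''(t) = f/(t+c_0)^2 \ge 0$, on $\{t<0\}$ the integrand is affine, and convexity across the origin reduces to the left and right derivatives at $t=0$ both equalling $1 - f/c_0$, so that $l_{f,c_0}'$ is globally nondecreasing.

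For continuity I would derive an affine upper bound in $\|v\|_{L^1(\Sigma)}$. On $\{v\ge 0\}$ the monotonicity of the logarithm together with $f\ge 0$ gives $l_{f,c_0}(v) \le v - f\log c_0$, while on $\{v<0\}$ the integrand is the explicit affine expression; summing and using assumption (AP) to ensure $\|f\log c_0\|_{L^1(\Sigma)}<\infty$ and $\|f/c_0\|_\infty<\infty$ yields $\dkl(v) \le C_1\|v\|_{L^1(\Sigma)} + C_2$ with finite constants. Combined with the coercivity estimate below this shows $\dkl$ is real-valued everywhere, and since a convex functional that is bounded above on a neighbourhood of each point is continuous there, continuity follows.

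The coercivity estimate is where the real work lies, and I expect the $v\ge 0$ regime to be the main difficulty. I would prove two one-sided estimates and recombine them. For $v\le 0$ the constant $L = \|1-f/c_0\|_\infty + 1$ is tailored precisely so that bounding $(1-f/c_0)v$ from below by $-\|1-f/c_0\|_\infty\|v\|_{L^1(\Sigma)}$ and adding $L\|v\|_{L^1(\Sigma)}$ leaves exactly $\|v\|_{L^1(\Sigma)}$, giving $\dkl(v)\ge \|v\|_{L^1(\Sigma)} - \|f\log c_0\|_{L^1(\Sigma)}$. For $v\ge 0$ I would invoke the pointwise inequality furnished by the convexity and the vanishing at $t=1$ of $t\mapsto(\tfrac{4}{3}+\tfrac{2}{3}t)(t\log t - t+1)-(t-1)^2$; the substitution $t = f/(v+c_0)$ turns it into a pointwise bound of $(v+c_0-f)^2$ by the product of $\tfrac{4}{3}(v+c_0)+\tfrac{2}{3}f$ with the log-likelihood integrand plus the $v$-independent remainder $c_0-f+f\log f$. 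Taking square roots, integrating and applying Cauchy--Schwarz then produces a quadratic inequality for $\|v+c_0\|_{L^1(\Sigma)}$ in terms of the log-likelihood integral, which a short dichotomy (according to whether the relevant linear factor is $\le 1$ or $>1$) solves to yield an estimate of the form $\|v\|_{L^1(\Sigma)} \le M\int_\Sigma(v - f\log(v+c_0))\,d\sigma + N$. I would finish by splitting an arbitrary $v$ into $v_+$ and $v_-$, applying the two regime estimates, and absorbing the cross terms — each of the form $\int f\log c_0$ over the complementary region — into the additive constants. The one subtle point I would verify carefully is the finiteness of the remainder $\int_\Sigma(c_0-f+f\log f)\,d\sigma$, which is not automatic from $f\in L^1(\Sigma)$ but does follow from (AP), since $f \le \|f/c_0\|_\infty\, c_0$ gives $f\log f \le f\log\|f/c_0\|_\infty + f\log c_0$ with both terms integrable, while the bound $f\log f \ge -\operatorname{meas}(\Sigma)/e$ handles the lower side; this is exactly what permits folding the remainder into the constant $N$.
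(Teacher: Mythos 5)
Your proposal is correct and follows essentially the same route as the paper's proof: pointwise convexity of $l_{f,c_0}$ via matching one-sided derivatives at the origin, continuity from an affine upper bound plus local boundedness of convex functions, and the coercivity estimate via the two-regime argument with the auxiliary convex function $t\mapsto(\tfrac{4}{3}+\tfrac{2}{3}t)(t\log t-t+1)-(t-1)^2$, the substitution $t=f/(v+c_0)$, Cauchy--Schwarz, and the dichotomy on the resulting quadratic inequality. Your explicit verification that $f\log f\in L^1(\Sigma)$ under (AP) is a detail the paper silently absorbs into the constant $N$, and it is a welcome addition.
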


Using these properties of the data discrepancy, we obtain the following existence result by standard arguments.
\begin{prop}\label{duality_result_poisson}
Assume that \eqref{AP} is fulfilled and there exists $c>0$ such that $c|z| \leq j(x,z)$ for every $z\in\RR^{d}$ and for almost every $x\in\om$.
Then there exists a solution to  
\begin{equation}\label{primal_problem_PET_original}  
 \inf_{u\in L^p (\Omega)} J^{**}(u)  + \lambda \dkl(Ku) + \I_{[0,\infty)}(u). 
 \end{equation}
 \proof We only provide a sketch: Let $(u_n)_{n\in\NN}$ be an infimizing sequence for \eqref{primal_problem_PET_original}.  Since $\ker(K) = \range(K^*)^\perp$ \cite[Corollary 2.18]{Brezis}, $K$ does not vanish on constant functions. Hence, we get from the equivalence of $J^{**}$ to $\tv$ and the coercivity of $\dkl$ (Lemma \ref{lem:kl_properties}) as well as norm equivalence in finite dimensional spaces that $(u_n)_{n\in\NN}$ is bounded in $L^p(\Omega)$. Hence there exists a weakly convergent subsequence and from weak lower semi-continuity of all terms appearing in the objective functional, existence follows readily.
\end{prop}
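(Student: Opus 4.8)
The plan is to apply the direct method of the calculus of variations. First I would note that the functional in \eqref{primal_problem_PET_original} is proper: at $u=0$ we have $J^{**}(0)=0$, $\I_{[0,\infty)}(0)=0$ and $\dkl(K0)=\dkl(0)<\infty$ by continuity of $\dkl$ on $L^1(\Sigma)$ (Lemma \ref{lem:kl_properties}), so the infimum is finite. I then take an infimizing sequence $(u_n)_{n\in\N}\subset L^p(\om)$, whose objective values are bounded above by a finite constant $C_0$. The argument splits into three steps: (a) a coercivity estimate giving boundedness of $(u_n)_n$ in $L^p(\om)$, (b) extraction of a weak limit via reflexivity of $L^p(\om)$ (recall $p\in(1,\ds]$), and (c) weak lower semi-continuity of each term.

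The coercivity step is the crux, and the part I expect to be the main obstacle, because $J^{**}$ controls only the derivative of $u$ and nothing about its mean. Since $\dkl$ is bounded below (its coercivity estimate in Lemma \ref{lem:kl_properties} forces $\dkl\ge -N/M$) and $J^{**},\I_{[0,\infty)}\ge 0$, the bound on the total objective yields separate bounds $J^{**}(u_n)\le C_1$ and, again via Lemma \ref{lem:kl_properties}, $\|Ku_n\|_{L^1(\Sigma)}\le C_2$. Invoking the coercivity hypothesis $c|z|\le j(x,z)$, Remark \ref{u_BV_finite_J} shows that $J^{**}$ is finite only on $\bv(\om)$ with $c|Du_n|(\om)\le J^{**}(u_n)$, so $|Du_n|(\om)$ is bounded. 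I would then split $u_n=m_n\mathbf{1}+w_n$ into its mean $m_n:=\tfrac1{|\om|}\int_\om u_n\,dx$ (times the constant function $\mathbf{1}$) and the zero-mean remainder $w_n$. The Poincaré--Wirtinger inequality in $\bv(\om)$, namely $\|w_n\|_{L^{\ds}(\om)}\le C|Dw_n|(\om)=C|Du_n|(\om)$, bounds $\|w_n\|_{L^{\ds}(\om)}$, hence $\|w_n\|_{L^p(\om)}$ since $p\le\ds$ and $\om$ is bounded. For the constant modes, boundedness of $K$ gives a bound on $\|Kw_n\|_{L^1(\Sigma)}$, so by the triangle inequality $|m_n|\,\|K\mathbf{1}\|_{L^1(\Sigma)}=\|K(m_n\mathbf{1})\|_{L^1(\Sigma)}$ is bounded. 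Here assumption \eqref{AP}(iv) enters decisively: since $\ker(K)=\range(K^*)^\perp$ \cite[Corollary 2.18]{Brezis} and the constants lie in $\range(K^*)$, the operator $K$ does not annihilate $\mathbf{1}$, whence $\|K\mathbf{1}\|_{L^1(\Sigma)}>0$ and $(m_n)_n$ is bounded. Equivalently, on the finite-dimensional space of constants the norms $\|\cdot\|_{L^p(\om)}$ and $\|K\cdot\|_{L^1(\Sigma)}$ are equivalent. Adding the two estimates gives $\sup_n\|u_n\|_{L^p(\om)}<\infty$.

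Granting boundedness, reflexivity of $L^p(\om)$ furnishes a subsequence (not relabeled) with $u_n\rightharpoonup u$ for some $u\in L^p(\om)$, and it remains to pass to the limit term by term. The functional $J^{**}$, being a biconjugate, is convex and lower semi-continuous for the $L^p$ topology, and by \eqref{Jastast_lsc_Ldast} it is in fact the envelope for weak $L^p$ convergence, so $J^{**}(u)\le\liminf_n J^{**}(u_n)$. The set $\{u\ge0\}$ is convex and strongly closed in $L^p(\om)$, hence weakly closed, so $\I_{[0,\infty)}$ is weakly lower semi-continuous. Finally, $K$ is a bounded linear operator and therefore weak-to-weak continuous, giving $Ku_n\rightharpoonup Ku$ in $L^1(\Sigma)$; since $\dkl$ is convex and continuous on $L^1(\Sigma)$ (Lemma \ref{lem:kl_properties}), it is weakly sequentially lower semi-continuous by Mazur's lemma, so $\dkl(Ku)\le\liminf_n\dkl(Ku_n)$. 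Summing the three inequalities,
\[
 J^{**}(u)+\lambda\dkl(Ku)+\I_{[0,\infty)}(u)\le\liminf_{n\to\infty}\Big(J^{**}(u_n)+\lambda\dkl(Ku_n)+\I_{[0,\infty)}(u_n)\Big),
\]
so that $u$ realizes the infimum and is the desired solution.
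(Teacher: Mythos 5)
Your argument is correct and follows essentially the same route as the paper's (sketched) proof: boundedness of the infimizing sequence via coercivity of $j$ (hence of $J^{**}$ on the gradient part), coercivity of $\dkl$, and the fact that $K$ does not annihilate constants (equivalently, norm equivalence on the finite-dimensional constant mode), followed by weak compactness and weak lower semi-continuity of each term. Your write-up simply fills in the details the paper leaves implicit, in particular the mean/zero-mean splitting and the Poincar\'e--Wirtinger step.
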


For applying the general duality result of Theorem \ref{prop:general_duality_result} to the situation of Poisson noise, the only issue that remains is the fact that functional realizing the positivity constraint is not continuous at zero. To overcome this, we will introduce a slight modification of this functional as follows. We define the penalty term
\[ H(u) = M\|u_-\|_{L^{1}(\om)}. \]
The following properties of $H$ are immediate.
\begin{lem} The functional $H:L^p(\Omega) \rightarrow \overline{\RR}$ is convex, lower semi-continuous, bounded above in a neighborhood of zero and $H^*(u^*) = \I_{[-M,0]}(u^*)$.
\end{lem}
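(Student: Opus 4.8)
The plan is to treat $H$ as an integral functional $H(u) = \int_\Omega h(u(x))\,dx$ with the scalar convex integrand $h(t) := M\max\{-t,0\}$ (so that $M|u_-(x)| = h(u(x))$, recalling $u_- = \min\{u,0\}$), and to deduce every assertion either from elementary pointwise properties of $h$ or from the continuous embedding $L^p(\Omega) \hookrightarrow L^1(\Omega)$, which holds since $\Omega$ is bounded and $p \geq 1$. This reduces all four claims to one-dimensional facts.

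First I would record that $t\mapsto h(t)$ is convex, being $M$ times the maximum of the two affine functions $t\mapsto -t$ and $t\mapsto 0$, and that $t\mapsto \min\{t,0\}$ is $1$-Lipschitz. Convexity of $H$ then follows from convexity of $h$ together with linearity of $u\mapsto u(x)$ and monotonicity of the integral. For continuity (hence lower semi-continuity) I would use $|H(u)-H(v)| \leq M\|u_- - v_-\|_{L^1(\Omega)} \leq M\|u-v\|_{L^1(\Omega)} \leq MC_\Omega\|u-v\|_{L^p(\Omega)}$, where $C_\Omega$ is the embedding constant; this exhibits $H$ as globally Lipschitz on $L^p(\Omega)$, which yields lower semi-continuity and, since $H(0)=0$, boundedness above on any ball about zero (explicitly $H(u)\leq MC_\Omega$ for $\|u\|_{L^p(\Omega)}\leq 1$).

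The only computation requiring care is the conjugate. I would invoke the integral representation of the conjugate of an integral functional on $L^p$, exactly as used for Proposition \ref{lbl:J_star} via \cite[Theorem X.2.1]{Ekeland}, to obtain $H^*(u^*) = \int_\Omega h^*(u^*(x))\,dx$ for $u^* \in L^q(\Omega)$. It then remains to compute the scalar conjugate: splitting $h^*(s) = \sup_t\{st - M\max\{-t,0\}\}$ over $t\geq 0$ and $t\leq 0$ gives $h^*(s)=0$ if $s\in[-M,0]$ and $h^*(s)=+\infty$ otherwise, i.e.\ $h^* = \I_{[-M,0]}$. Substituting this back yields $H^*(u^*)=0$ precisely when $-M\leq u^*(x)\leq 0$ for a.e.\ $x\in\Omega$ and $H^*(u^*)=+\infty$ otherwise, which is the claimed identity $H^*(u^*)=\I_{[-M,0]}(u^*)$.

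I expect the only genuine (if minor) obstacle to be this conjugate identity. If one prefers to avoid citing the integral-functional conjugation theorem, the two inclusions can be argued directly: for $-M\leq u^*\leq 0$ a.e.\ the pointwise inequality $u^*(x)u(x)\leq M|u_-(x)|$, checked separately on $\{u\geq 0\}$ (where $u^*u\leq 0$) and on $\{u<0\}$ (where $u^*\geq -M$ and $u<0$ give $u^*u\leq -Mu$), gives $(u^*,u)-H(u)\leq 0$ with equality at $u=0$, so $H^*(u^*)=0$; whereas if $\{u^*>0\}$ or $\{u^*<-M\}$ has positive measure, testing with $u = n\chi_A$ for $A\subset\{u^*>0\}$, respectively $u=-n\chi_B$ for $B\subset\{u^*<-M\}$, and letting $n\to\infty$ drives $(u^*,u)-H(u)$ to $+\infty$, so $H^*(u^*)=+\infty$. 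Either route is routine, and the remaining assertions are immediate, as the statement indicates.
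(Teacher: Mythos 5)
Your proof is correct; note that the paper itself offers no proof at all (it declares these properties ``immediate''), so your argument simply supplies the standard details: pointwise convexity of the integrand $h(t)=M\max\{-t,0\}$, global Lipschitz continuity of $H$ via the embedding $L^p(\Omega)\hookrightarrow L^1(\Omega)$, and the scalar conjugate $h^*=\I_{[-M,0]}$ lifted to $L^q(\Omega)$ either by the integral-conjugation theorem already used for Proposition \ref{lbl:J_star} or by your direct two-inclusion argument. Both routes you sketch are valid and are exactly what the authors had in mind.
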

Replacing $\I_{[0,\infty)}$ by $H$ obviously is  modification of the original problem for which, in general, we cannot expect that the set of solution coincides with the one of the original problem. However, $H$ is chosen in the spirit of exact penalty functions. For the latter it can be shown that for sufficiently large penalty parameter, the solution of the original problem also solves the penalized one.
We also note that
%Nevertheless, we can motivate this modification as follows: Ultimately, 
our goal is to show equivalence of \eqref{eq:pet_original_problem} to a saddle-point problem, which is then numerically solved with a primal-dual algorithm. For the original problem, the positivity constraint would then result in a projection of the unknown to the positive reals in every iteration. The modified objective results in a soft-shrinkage operation for the negative values at each iteration, i.e., $u$ is unchanged at positive values and replaced by $u(x) = \min\{u(x) + M,0\}$ for negative values. %Since we use TV-type regularization and positive data, we be expect that choosing $M$ sufficiently large will have exactly the same effect as a positivity constraint since values below $M$ should not be optimal anyway.

%\begin{lem}\label{lem:kl_infcon_sum} For any $u \in L^q(\Omega)$, 
%\[ \big[ (\lambda D \circ K) + H \big]^*(u) = \min _{v \in L^q(\Omega)} \big[ (\lambda D \circ K)^* (u-v) + H^*(v) \big] . \]
%In particular, the minimun on the right hand side is attained and 
%\[\domain (\big[ (\lambda D \circ K) + H \big]^*) =  K^*\domain((\lambda D)^*) + \domain (H^*). \]
%\begin{proof}
%We employ duality: Define $\varphi:L^p(\Omega) \rightarrow \overline{\R}$ as $\varphi(u) = (\lambda D \circ K)(u)$ and $\psi:L^p(\Omega) \rightarrow \overline{\R}$ as $\psi(u) = H(u)$. Then 
%\[ \bigcup _{\mu \geq 0} \mu ( \domain(\varphi) - \domain (\psi))  \subset  \bigcup _{\mu \geq 0} \mu ( \{0\} - L^p(\Omega)) = L^p(\Omega).\]
%\todo{here we are just using the fact that $H$ and $D\circ K$ are finite and continuous at 0 to apply Attouch-Brezis right? we dont' have to prove anything about closed subspaces...}
%Hence by \cite{Attouch86} the convex conjugate of the sum equals the infimal convolution of the convex conjugates and the later attains a minimum. The assertion on the domains is a direct consequence the definition of the infimal convolution and Lemma \ref{lem:kl_properties}.
%\end{proof}
%\end{lem}
%\todo{In the end, are we using the Lemma \ref{lem:kl_infcon_sum} somewhere?}

We finally get the following result for the modified PET problem:

\begin{prop}\label{duality_result_poisson_smoothed}
Assume that (AP) is satisfied,  
%the constant functions are contained in $\range(K^*)$
 and that there exists $c>0$ such that $c|z| \leq j(x,z)$ for every $z\in\RR^{d}$ and for almost every $x\in\om$.
Then there exists a solution to the primal problem 
\begin{equation}\label{primal_problem_PET}
 \inf_{u\in L^p (\Omega)} J^{**}(u)  + \lambda \dkl(Ku) + H(u), \tag{P-{\footnotesize PET}}
 \end{equation}
to the corresponding predual problem (\emph{pD-{\footnotesize PET}}), and to the saddle-point problem 
\[ \inf_{\substack{p \in W^q_0(\dive;\om)  \\ p \in Q}} \sup_{u\in L^p (\Omega)} (\di p,u)  -   \lambda \dkl(Ku) - H(u). \]
The optimal values of all of these problems coincide, and the problems are equivalent in the sense that a pair $(p,u)$ is a solution to the saddle-point problem if and only if $u$ is a solution to \eqref{primal_problem_PET} and $p$ is a solution to (\emph{pD-{\footnotesize PET}}).
\begin{proof} 
We apply the results of Theorem \ref{prop:general_duality_result} with the data term being $u \mapsto (\lambda \tilde{D}\circ \tilde{K})(u)$, where $\tilde{K}:L^p(\Omega) \rightarrow L^1(\Sigma) \times L^p(\Omega)$ with $\tilde{K}u = (Ku,u)$, and $\tilde{D}:L^1(\Sigma) \times L^p(\Omega) \rightarrow \overline{\R}$, $(v,u) \mapsto \tilde{D}(v,u) = \dkl(v) + \lambda ^{-1} H(u)$. 

Since both $\dkl$ and $H$ are continuous and finite at $0$, also $\tilde{D}$ is continuous and finite at $0$.
%We have already shown that $\dkl$ is coercive, hence $D: = \dkl + \lambda ^{-1} H$ is coercive and, in case $(ii)$ holds, the assumption $(ii)$ of Proposition \ref{prop:general_duality_result} holds.
It is hence only left to show that the condition $(i)$ of Proposition \ref{prop:general_duality_result} holds true. For this purpose, we  first note that $\domain((\lambda \tilde{D})^*) = \domain ((\lambda \dkl)^*) \times \domain(H^*)$. 
%Setting $\tilde{K} : L^p(\Omega) \rightarrow L^1(\Sigma) \times L^p(\Omega)$ 
%We have that the data terms in the primal problem \eqref{primal_problem} can be written as $\lambda \tilde{D}(\tilde{K}u)$.
Hence
\begin{align*} 
 \tilde{K}^* \big[ \bigcup_{\mu \geq 0}\mu \domain((\lambda \tilde{D})^* ) \big]  
&=  \tilde{K}^* \big[ \bigcup_{\mu \geq 0} \mu(\domain ( (\lambda \dkl)^*) \times  \domain(H^*)) \big]\\ 
&  \supset \tilde{K}^* \big[ \bigcup_{\mu \geq 0}\mu( B_\epsilon (0) \times \{0\}) \big] \\
&= \range (K^*).
\end{align*}
Now, since the constant functions are contained in $\range (K^*)$, we get that 
\[ P_{\ker(\nabla)}  \bigg( \tilde{K}^* \big[ \bigcup_{\mu \geq 0} \mu\domain((\lambda \tilde{D})^* ) \big] \bigg) = \ker (\nabla) .\]
Further it is easy to see that $(0,0) \in \domain((\lambda \tilde{D})^\ast)$. Hence, all assumptions of Theorem \ref{prop:general_duality_result} hold and the result follows.
\end{proof}
\end{prop}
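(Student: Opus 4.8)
The plan is to recognize Proposition~\ref{duality_result_poisson_smoothed} as a direct instance of the general duality Theorem~\ref{prop:general_duality_result}, once the problem is cast into the right functional-analytic shape. The difficulty is that the objective in \eqref{primal_problem_PET} contains two separate non-smooth terms acting on $u$: the Poisson discrepancy $\dkl$ acting through the forward operator $K$, and the penalty $H$ acting on $u$ directly. Theorem~\ref{prop:general_duality_result} only accommodates a single composite term of the form $\lambda D\circ K$. First I would therefore stack the two terms by introducing the operator $\tilde K:L^p(\om)\to L^1(\Sigma)\times L^p(\om)$, $\tilde Ku=(Ku,u)$, together with the separable functional $\tilde D(v,u)=\dkl(v)+\lambda^{-1}H(u)$, so that $\lambda\tilde D\circ\tilde K$ reproduces exactly the discrepancy plus penalty of \eqref{primal_problem_PET}. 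Both $\tilde K$ and the product space are again of the admissible type required by the theorem.

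Next I would verify the standing hypotheses on $\tilde D$. Convexity and lower semi-continuity of $\tilde D$ follow componentwise from the properties of $\dkl$ established in Lemma~\ref{lem:kl_properties} and from those of $H$; likewise $\tilde D$ is finite and continuous at the origin because each summand is. The coercivity ingredient of condition~$(i)$, namely $c|z|\le j(x,z)$, is assumed outright, so only the geometric part of condition~$(i)$ remains.

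The heart of the argument is the verification that
\[
P_{\ker(\nabla)}\Big(\tilde K^*\big[\textstyle\bigcup_{\mu\ge 0}\mu\,\domain((\lambda\tilde D)^*)\big]\Big)
\]
is a vector space and that $0\in\domain((\lambda\tilde D)^*)$. Because $\tilde D$ decouples across its two arguments, one has $\domain((\lambda\tilde D)^*)=\domain((\lambda\dkl)^*)\times\domain(H^*)$. Continuity of $\dkl$ at $0$ (Lemma~\ref{lem:kl_properties}) together with \cite[Theorem~4.4.10]{Borwein} yields a ball $B_\epsilon(0)\subset\domain((\lambda\dkl)^*)$, while $H^*=\I_{[-M,0]}$ gives $0\in\domain(H^*)$; in particular $(0,0)\in\domain((\lambda\tilde D)^*)$. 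Since $\tilde K^*(v^*,u^*)=K^*v^*+u^*$, restricting to $u^*=0$ and scaling shows $\range(K^*)\subset\tilde K^*\big[\bigcup_{\mu\ge 0}\mu\,\domain((\lambda\tilde D)^*)\big]$. The decisive step is then assumption~\eqref{AP}(iv): the constant functions, i.e.\ all of $\ker(\nabla)$, lie in $\range(K^*)$, whence the projected set contains $\ker(\nabla)$; as $P_{\ker(\nabla)}$ maps into $\ker(\nabla)$, the projected set equals $\ker(\nabla)$, a finite-dimensional --- hence closed --- vector space. This is precisely where I expect the main subtlety to lie: without the constants being reproduced by $K^*$, the projected image could fail to be a subspace, and condition~$(i)$ would break down.

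With both parts of condition~$(i)$ in hand, Theorem~\ref{prop:general_duality_result} applies verbatim to $J^{**}+\lambda\tilde D\circ\tilde K$ and delivers existence for the primal, predual and saddle-point problems, coincidence of their optimal values, and the claimed primal--predual equivalence. It only remains to observe that, under the identification $\lambda D\circ K\leftrightarrow\lambda\tilde D\circ\tilde K$, the abstract predual and saddle-point problems produced by the theorem are exactly (pD-{\footnotesize PET}) and the stated saddle-point problem, so that no further computation is required to conclude.
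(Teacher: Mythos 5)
Your proposal is correct and follows essentially the same route as the paper: the same stacking $\tilde Ku=(Ku,u)$, $\tilde D(v,u)=\dkl(v)+\lambda^{-1}H(u)$, the same factorization $\domain((\lambda\tilde D)^*)=\domain((\lambda\dkl)^*)\times\domain(H^*)$, and the same use of \eqref{AP}(iv) to show the projected set equals $\ker(\nabla)$. One small attribution slip: the ball $B_\epsilon(0)\subset\domain((\lambda\dkl)^*)$ comes from the \emph{coercivity} estimate of Lemma~\ref{lem:kl_properties} via \cite[Theorem 4.4.10]{Borwein} (continuity of $\dkl$ at $0$ is what makes $\tilde D$ admissible for the theorem, not what yields the ball), but this does not affect the validity of the argument.
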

\begin{rem} \label{rem:standard_stability}
Summarizing, both for norm and Poison-log likelihood discrepancies, in the situation where the underlying inverse problem is indeed ill-posed, coercivity of the integrand as it appears in the definition of the structural TV prior is necessary for showing existence and for establishing our main duality result. As in this situation the structural TV prior is topologically equivalent to isotropic TV, we note that also classical stability and convergence results for vanishing noise levels can be shown by standard techniques. We refer, for instance, to \cite{Hofmann07_nonlinear_tikhonov_banach} for the norm-discrepancy case and to \cite{sawatzky2013tv} for the case of a Poison-log likelihood discrepancy.
\end{rem}

\section{Numerical examples}\label{sec:numerics}
The goal of this section is to exemplary show the numerical realization of two applications of the above general setting. The first one considers a pure denoising problem. This example is included to provide a setting where the inversion of the forward operator, in this case the identity, is well-posed and hence, using the second assumption of Proposition \ref{prop:duality_norm_discrepancy}, the duality result also holds for integrands which vanish on non-trivial sets. The second problem considers the practically more relevant application of MR-guided PET reconstruction and shows how recently proposed regularization approaches \cite{Schramm17_pls,Ehrhardt_PET_prior} can be realized within our framework.

In an abstract setting, the variational problem setting of the previous section can be written as follows:
\[ \min_{u \in L^p(\Omega) } J^{**} (u) + \G(u)  + \Hc(u),\]
where $\G$ and $\Hc$ are smooth and ``simple'' functions, respectively, that are to be specified for the concrete problem setting, e.g., one chooses $\G(u) = \|u-f\|_2 ^2$, $\Hc \equiv 0$ for denoising.
We have shown that, under suitable assumptions, this problem is equivalent to the saddle-point problem
\[\min_{\substack{p \in W^q_0(\dive;\om) \\ p \in Q}} \max_{u\in L^p (\Omega)} (\di p ,u) - \G(u) - \Hc(u).  \]
The main point in this reformulation is that it allows to solve the variational problem without requiring explicit knowledge of $J^{**}$.
We recall that our main results contains a saddle-point formulation whose numerical realization is directly amenable to first-order primal-dual algorithms; see, e.g., \cite{chambolle2011first,chambolle2015ergodic_primal_dual}. In fact, since for $p=2$ the saddle-point problem is defined in a Hilbert space, a corresponding formulation of algorithm can even be shown to be convergent in the infinite dimensional setting \cite{chambolle2015ergodic_primal_dual}. In practice, however, a proper infinite-dimensional version of the algorithm would require the computation of some proximal mappings in $H(\di;\Omega)$ space, which in turn requires the solution of a non-trivial problem (essentially TV denoising) on its own. 

For the derivation of an implementable algorithm, we first discretize and change to the Euclidean norm for both underlying spaces. The iteration steps of the algorithm presented in \cite{chambolle2015ergodic_primal_dual} for solving the saddle-point problem above can then be specified in a general form as follows: Given some triple $(u,p,\bar{p})$, the updates $u_+$, $p_+$, $\bar{p}_+$ are computed as
\begin{equation} \label{eq:pd_algorithm_general_form}
\left\{
\begin{aligned}
u_+ & = \prox_{\sigma,\Hc} \Big(   u + \sigma \di \overline{p} + \sigma  \nabla \G(u) \Big), \\
p_+ & = \proj_Q \Big(   p  + \tau \nabla u_+  \Big), \\
\overline{p}_+ & = 2 p_+ - p.
\end{aligned}
\right.
\end{equation}
Here, $\prox_{\sigma,\Hc}$ denotes the proximal mapping defined as $\prox_{\sigma,\Hc}(u) = \arg\min _{y} \frac{\|u-y\|_2^2}{2\sigma}  + \Hc(y)$, which is well-defined by convexity of $\Hc$. The term $\proj_Q$ is a projection onto the set $Q$ and results from the proximal mapping of $\I_{Q}$, and $\sigma,\tau >0$ need to be suitably chosen. This iteration is then repeated with $(u,p,\bar p):=(u_+,p_+,\bar p_+)$ until some stopping rule is satisfied. 

\subsection{Weighted TV with vanishing weight}

For the case of weighted TV denoising one can simply choose $\Hc\equiv 0$ and $\G(u) := \frac{\lambda}{2}\|u-f\|_{L^{2}(\om)} ^2$, with $f \in L^2(\Omega)$ some given data. This results in the saddle-point problem
\begin{equation}\label{sp_weighted}
\inf_{\substack{p \in W^2_0(\di;\om) \\ p \in Q}} \sup_{u\in L^2 (\Omega)} (\di p ,u) - \frac{\lambda}{2}\|u-f\|_{L^{2}(\om)}^{2},
\end{equation} 
where $Q = \{g \in W^2_0(\di;\om)\cap L^\infty(\Omega,\R^d) : \;|g(x)|\leq \alpha(x) \text{ for a.e. } x \in \Omega\}$.
For a proof of concept, we compute  a weighted TV-based denoising example, with vanishing continuous weight function $\alpha$, i.e., $j(x,z)=\alpha(x)|z|$, with $\alpha\in C(\overline{\om})$, $\alpha\ge 0$. This fits into the regime of condition $(ii)$ of Theorem \ref{prop:general_duality_result}, i.e., $K=id$, hence $\mathrm{Rg}(K^{\ast})$ is closed and $\ker(K)$ is finite dimensional and the fact that $\alpha$ vanishes implies that the coercivity assumption $(i)$ of the same theorem does not hold. Notice that because of this lack of coercivity we cannot expect the solution $u$ to be in $\mathrm{BV}(\Omega)$. Thus, the characterization $J^{\ast\ast}(u)=\int_{\Omega}\alpha\,d|Du|$ is not valid here. This deficiency, however, can be overcome by solving the corresponding saddle-point problem. 

 After a standard discretization procedure the saddle-point problem \eqref{sp_weighted} can be solved straightforwardly with the algorithm in \eqref{eq:pd_algorithm_general_form}, where the involved proximal mappings $\prox_{\sigma,\Hc}$ and $\proj_Q$ can be computed explicitly and pointwise. We use standard forward differences for the discrete gradient with pixel replication at the boundary. The discrete divergence operator is defined by the adjoint relation $\nabla=-\di^{\mathrm{T}}$. We use an accelerated version of the primal-dual algorithm of \cite{chambolle2011first}, as described in \cite{chambolle2015ergodic_primal_dual}, where for the step-sizes $\tau$ and $\sigma$,  the update rule $\tau_{n}=1/(n+1)$, $\sigma_{n}=8/\tau$ is employed, with $\tau_{0}=10^{-4}$ and $\sigma_{0}=8\times 10^{4}$. Here, $n$ is the iteration number and $8$ is an upper bound for the squared norm of the discrete gradient operator, i.e., $\|\nabla\|^2\le 8$. The primal and dual variables were initialized as $u_{0}=f$ and $p_{0}=0$ respectively. As a stopping rule we used a maximum number of iterations $(n=2000)$, after which no considerable change in the iterates was observed.

  Regarding the weight function $\alpha$, we note that our purpose here is not to determine a procedure for automatically selecting it -- for that we refer the reader to \cite{hint_rau, hint_rau_tao_langer}. Nevertheless, in order to produce the weight function $\alpha$ needed for the examples in Figure \ref{fig:weighted} we enable the following procedure: Initially we detect the edges of the noisy image in Figure \ref{fig:noisy} (Gaussian noise of zero mean and  standard deviation $\sigma=0.5$) by using an edge detection algorithm such as Canny algorithm \cite{canny}; compare Figure \ref{fig:edges}. For that we used the MATLAB's built-in Canny function using the parameter values $Thresh=[0.275, 0.55]$ and $Sigma=1.8$.
 
 Then we construct a positive continuous weight function $\alpha$ which takes the value zero exactly on the edges of the image. This can be done by computing the distance function to the edge set, which can be achieved for instance by solving the Eikonal equation \cite{sethian1999level}; see Figure \ref{fig:weight}. The fact that $\alpha$ vanishes exactly at the edge set leads to a piecewise constant reconstruction with less loss of contrast and better edge preservation when compared to the standard scalar TV-regularization; compare Figures \ref{fig:scalarTV} and \ref{fig:weightedTV}. See also \cite{mine_spatial} for a theoretical justification of this experimental observation. 

\begin{figure}[h!]
\begin{center}
\begin{subfigure}[t]{0.3\textwidth}\captionsetup{width=.8\linewidth}
\includegraphics[width=0.95\textwidth]{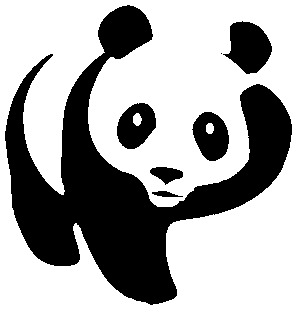}
\caption{Original.}
\end{subfigure}
\begin{subfigure}[t]{0.3\textwidth}\captionsetup{width=.8\linewidth}
\includegraphics[width=0.95\textwidth]{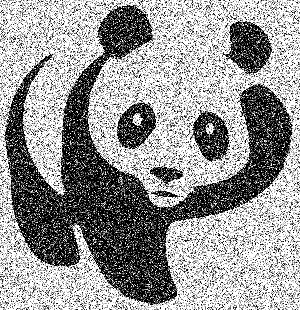}
\caption{Noisy, SSIM=0.1717,\\ MSE=0.0821.}
\label{fig:noisy}
\end{subfigure}
\begin{subfigure}[t]{0.3\textwidth}\captionsetup{width=.8\linewidth}
\includegraphics[width=0.95\textwidth]{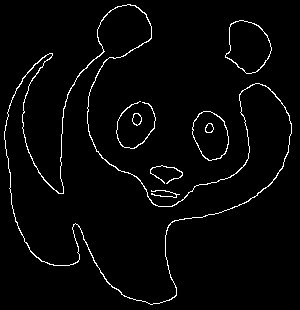}
\caption{Detected edge set.} 
\label{fig:edges}
\end{subfigure}
\vspace{0.2cm}

\begin{subfigure}[t]{0.3\textwidth}\captionsetup{width=.8\linewidth}
\includegraphics[width=0.95\textwidth]{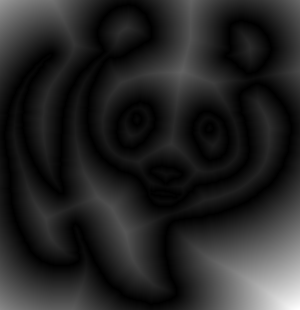}
\caption{Weight function $\alpha$ vanishing on the edge set.}
\label{fig:weight}
\end{subfigure}
\begin{subfigure}[t]{0.3\textwidth}\captionsetup{width=.92\linewidth}
\includegraphics[width=0.95\textwidth]{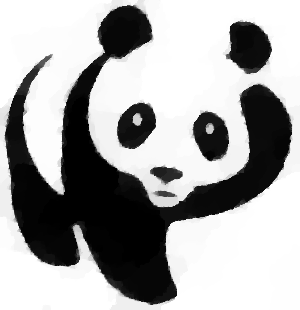}
\caption{Scalar TV, where\\  the scalar parameter $\alpha$ \\is optimized for best SSIM,\\  (SSIM=0.7759, MSE=0.0255).}
\label{fig:scalarTV}
\end{subfigure}
\begin{subfigure}[t]{0.3\textwidth}\captionsetup{width=.8\linewidth}
\includegraphics[width=0.95\textwidth]{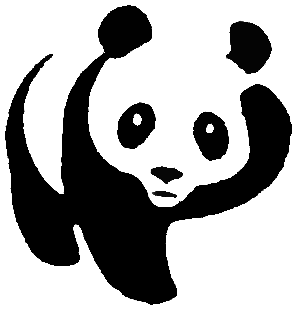}
\caption{Weighted TV using as weight the function $\alpha$ of Figure \ref{fig:weight}, (SSIM=0.9408, MSE=0.0182).}
\label{fig:weightedTV}
\end{subfigure}
\caption{Weighted TV-based denoising via solution of the saddle-point problem \eqref{saddle_point_problem} for $K=id$ and $j(x,z)=\alpha(x)|z|$, with $\alpha\in C(\overline{\om})$, $\alpha\ge 0$.\\ Image source: \url{https://pixabay.com}, (licence Creative Commons CC0).}
\label{fig:weighted}
\end{center}
\end{figure}

\subsection{MR-PET reconstruction}
Magnetic resonance imaging (MRI) and positron emission tomography (PET) are two complementary imaging techniques that are both intensively used in clinical applications. While MRI allows to resolve soft tissue contrast with comparably high spatial resolution, standard MRI techniques deliver qualitative information only. In contrast, PET imaging suffers from a poor spatial resolution, but it is able to deliver quantitative information (the distribution of a radioactively marked tracer). In the past years, and recently particularly motivated by the availability of joint MR-PET scanners, there has been a substantial research effort towards combining MR and PET data in order to obtain benefits for the ill-posed image reconstruction process; see for instance the references provided in \cite{Ehrhardt_PET_prior}. While some works aim at a simultaneous reconstruction of MR and PET \cite{Ehrhardt_PET_MRI, Holler_PET_MRI, Rasch17}, aiming to obtain mutual benefits for both modalities, most approaches employ an MR-based structural prior to improve upon the poor spatial resolution of PET \cite{Ehrhardt_PET_prior,Schramm17_pls, Vunckx2012}. Our work here provides an analytical basis for the latter. We note that our theory covers different priors that were proposed for structure-guided PET reconstruction in \cite{Ehrhardt_PET_prior,Schramm17_pls}. In particular, it allows to conclude that well-posedness of the PET reconstruction problem can only be expected if the prior satisfies the coercivity condition stated in Proposition \ref{duality_result_poisson_smoothed}.

We now exemplary work out the application of our framework to one particular prior, that is motivated by  \cite{Ehrhardt_structure_TV,Schramm17_pls}. Assume that we are given an already reconstructed MR image $v$ defined on a joint image domain $\Omega$. We supppose that $\nabla v \in L^{1}_{\text{loc}}(\Omega,\R^2)$. We then define the structural TV-based prior for PET reconstruction as follows. For parameters $\eta,\nu>0$ with $\eta \in (0,1)$ and $\nu $ small we define
\[ w(x) = \frac{\nabla v(x)}{\sqrt{|\nabla v(x)|^2 + \nu}} ,\]
the matrix-field
\[ A(x) = \sqrt{ I - \eta^2 w(x) \otimes w(x) }, \]
where $\sqrt{\cdot}$ refers to the matrix square root,
and the integrand 
\[ j(x,z) = |A(x)  z | .\]
It is easy to see that $j$ fulfills the assumptions (J1)--(J3). Hence, we can define the structural prior
\[ J(u) = \begin{cases} \int _\Omega |A(x) \nabla u (x) | \,dx &\text{if } u \in W^{1,1}(\Omega), \\ 
+\infty & \text{else,} \end{cases}\]
and consider its lower semi-continuous relaxation for regularization. The effect of this prior can be understood by re-writing
\[ j(x,z) = \sqrt{z^T A^T(x)A(x)z} = \sqrt{|z|^2 - \eta^2(z, w(x))^2} = |z|\sqrt{1 - \eta \big(\frac{z}{|z|},w(x)\big)^2}. \]
This shows that, at any point $x \in \Omega$, the cost of $j(x,z)$ depends on the extent to which $z$ is aligned with $w(x)$. In particular, the smallest cost will appear if $z$ and $w(x)$ are parallel. 
Regarding the parameters $\eta$ and $\nu$, we note that $\nu$ is used to carry out a regularized division in the normalization of $w$, as suggested for instance in \cite{Ehrhardt_PET_prior}. The parameter $\eta \in (0,1)$ does not appear in previous works and we use it to ensure coercivity of $j$, and hence that $J^{**}$ indeed has a regularizing effect. In fact, noting that
\[ j(x,z) = \sqrt{(|z|^2 - (z,w(x))^2)\eta^2 + (1-\eta^2)|z|^2} \geq \sqrt{1-\eta^2}|z|, \]
we obtain coercivity of $j$ and hence, by Proposition \ref{duality_result_poisson_smoothed}, well-posedness of the structural-prior based PET reconstruction problem given as
\begin{equation} \label{eq:pet_application_minprob}
\min _{u \in L^p(\Omega)} J^{**}(u) + \lambda \dkl (Ku) + H(u).
\end{equation}  
Further, equivalence and well-posedness of a corresponding saddle-point problem can be concluded in function space and, due to topological equivalence of $J^{**}$ with $\TV$, standard stability results can be expected; see Remark \ref{rem:standard_stability}.

\noindent \textbf{Algorithmic realization.} In order to derive a numerical algorithm for solving the structural-prior-based PET reconstruction problem in two dimensions, we now replace the image domain $\Omega$ and the data domain $\Sigma$ by a discretized pixel grid, that is, we set $\Omega = \R^{N_\Omega\times M_\Omega}$ and $\Sigma = \R^{N_\Sigma \times M_\Sigma}$ with $N_\Omega,M_\Omega,N_\Sigma,M_\Sigma \in \N$. For simplicity, we keep the notation of the continuous setting, where now all involved operators, integrals and norms are replaced by their discrete counterparts, using standard discretizations. In particular, for the forward operator for PET, we use the implementation of \cite{koesters_emrecon}, which is designed to work with real scanner data and includes resolution modeling by a convolution with a Gaussian kernel. 

In order to carry out the iteration steps as described in \eqref{eq:pd_algorithm_general_form} and obtain a convergent algorithm in the discretized setting, we need that the derivative of the data term, on which we perform an explicit descent step, is Lipschitz continuous. To achieve this, we carry out a $C^2$ instead of a $C^1$ interpolation of the Poisson-log-likelihood in regions with negative data. In fact, for $f \in [0,\infty)$ and $c_0 \in (0,\infty)$, we define the modified integrand
\[ \tilde{l}_{f,c_0}(t):= 
\begin{cases} t - f \log (t+c_0) &\text{if } t \geq 0, \\
 -f \log (c_0) + \big(1 - \frac{f}{c_0}\big) t + \frac{f}{2c_0^2}t^2 &\text{if } t<0, \end{cases}
 \]
 and a modified data term
\[ \widetilde{\dkl}(v) :=  \int_\Sigma \tilde{l}_{f(\sigma),c_0(\sigma)}(v(\sigma)) \,d\sigma + L\int_\Sigma g(v(\sigma)) \,d\sigma ,\]
where, for a small parameter $\epsilon>0$, %and up the additive term $\frac{\epsilon}{2}$, 
$g$ is a smoothed 1-norm for the negative values given by
\[g(x) = \begin{cases}
0 &\text{if } x \geq 0, \\
 -\frac{x^3}{\epsilon^2} - \frac{x^4}{2\epsilon^3} &\text{if } x \in [-\epsilon,0], \\
|x|+ \frac{\epsilon}{2} &\text{if } x \leq  -\epsilon. 

\end{cases}
\]
In this context, it is important to remember that any change of $\dkl$ that only affects negative values of $v$ does not change the optimal solutions as long as $Au$ is constrained to be positive at every point.

In order to simplify the computation of $\proj_Q$ in the iteration \eqref{eq:pd_algorithm_general_form}, we further carry out a change of variables in the saddle-point reformulation of \eqref{eq:pet_application_minprob} as $p = A^Tq$, where $A^Tq$ is defined as pointwise matrix-vector multiplication $A^Tq(x) = A^T(x) q(x)$. The resulting saddle-point problem then reads 
\[ \min_{q:\|q\|_\infty \leq 1} \max _u (\dive A^Tq,u) - \lambda \widetilde{\dkl}(Ku) - H(u), \] and the iteration steps in  \eqref{eq:pd_algorithm_general_form} can be stated in explicit form as
\begin{equation} \label{eq:pd_algorithm_pet}
\left\{
\begin{aligned}
u_+ & = \prox_{\sigma,H} \Big(   u + \sigma \di A^T \overline{q} - \sigma \lambda  \nabla \widetilde{\dkl}(u) \Big), \\
q_+ & = \proj_{\{\|\cdot \|\leq 1\}} \Big(   q  + \tau A\nabla u_+  \Big), \\
\overline{q}_+ & = 2 q_+ - q,
\end{aligned}
\right.
\end{equation}
where $(\prox_{\sigma,H} (u))(x)= \max\{u(x),\min \{ u(x)+\sigma M,0\}\} $ is a soft-thresholding operator and \\ $(\proj_{\{|\cdot |_2\leq 1\}}(q))(x) = q(x)/\max\{ 1,|q(x)|_2\}$. Following \cite{chambolle2015ergodic_primal_dual}, convergence of the algorithm can be guaranteed under the step-size constraints $\frac{1}{\tau}(\frac{1}{\sigma}-L) \geq \|\dive K^T\|$. Hence, we analytically estimate both $\|\dive A^T\|$ and $L$, with the latter being the Lipschitz constant of the derivative of $\lambda \widetilde{\dkl}$. However, in order to accelerate convergence in practice, we multiply the analytical estimate of $L$ by $10^{-3}$, which increases the admissible step-size. Even tough convergence naturally cannot be guaranteed for this increase step-size choice, we did not experience any convergence issue in practice, which might indicate that our analytical estimate of $L$ are too conservative.

To evaluate the effect of structural coupling compared to standard TV-regularization, we simulated a 2D PET and MR scan using a slightly modified version of the brain phantom of \cite{Aubert_broche_phantom}; see Figure \ref{fig:pet_mr_data}. In fact, we used the software provided in \cite{Aubert_broche_phantom} to simulate an MR image with MPRAGE contrast and a FDG-PET image. Both images share similar structures, but they also contain separate features. In addition, a separate lesion was added to each image, in the top right area for the PET image and in the top left area for the MR image, see corresponding red squares, (see \cite{Holler_PET_MRI} for more details on quantitative values for the PET phantom). Further, to avoid a bias resulting from piecewise constancy, we added two different linear grayscale gradients in the non-background region of both images. 

The MR image was then used as structural prior to define the regularization term $J$ as above. PET data, denoted above by $f$, was generated by forward-projecting the PET phantom, adding simulated random and scatter events (denoted above by $c_0$) and adding multiplicative Poisson-noise. Two different noise levels (strong and medium) were simulated. For comparison, we also carried out a standard TV-regularized reconstruction. The reconstructions for both methods were obtained by iterating the steps provided in \eqref{eq:pd_algorithm_pet}, where for standard TV-regularization $A$ was replaced by the identity. In order to ensure optimality, we carried out $10^4$ iterations for each method, while in practice about 1000 iterations seem sufficient to obtain close-to-optimal results. For both TV- and structural TV-regularization we tested a range of different regularization parameters ($\lambda$) and provide here the results with the lowest mean-squared error.

The resulting images can be seen in Figure \ref{fig:pet_recon_results}. As one would expect, for both noise levels, the structural-prior based reconstruction is able to recover sharper edges whenever they are aligned with the prior image. This results also in improved MSE values. In the PET only lesion, it can be observed that both TV and structural TV obtain similar results. In the region of the PET image where the MR prior has an additional feature, no particular feature transfer can be observed. This indicates a certain robustness of the approach with respect to non-aligned features between the prior and the reconstructed image. We emphasize again that the results presented here should be understood as a proof-of-concept only, and we refer to \cite{Ehrhardt_PET_prior, Schramm17_pls} for a detailed evaluation of this kind of prior for PET reconstruction, also on 3D PET patient data.

%\begin{figure}
%\includegraphics[width=0.4\textwidth]{figures/pet/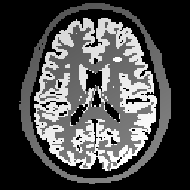}
%\includegraphics[width=0.4\textwidth]{figures/pet/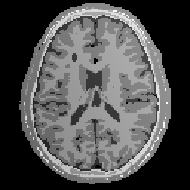}
%\caption{\label{fig:pet_mr_data}Left: Ground truth PET image. Right: MRI MPRAGE contrast image used as structural prior. Note that, in addition to the phantom from \cite{Aubert_broche_phantom}, we have added two separate lesions (PET: Top right, MRI: Top left) and a linear gray-value gradient in non-background regions (PET: Increasing from top left to bottom right, MRI: Increasing from bottom left to top right)}
%\end{figure}
\begin{figure}


\begin{tikzpicture}
\draw (0,0) node{\includegraphics[width=0.39\textwidth]{pet_gt.png}};
\draw[red] (0.25,0.7) -- (1.25,0.7) -- (1.25,1.7) -- (0.25,1.7) -- (0.25,0.7);
\draw (0.4\textwidth,0) node{\includegraphics[width=0.39\textwidth]{mri_gt.png}};
\draw[red] (5.44,0.74) -- (6.44,0.74) -- (6.44,1.74) -- (5.44,1.74) -- (5.44,0.74);
\end{tikzpicture}
\caption{\label{fig:pet_mr_data}Left: Ground truth PET image. Right: MRI MPRAGE contrast image used as structural prior. Note that, in addition to the phantom from \cite{Aubert_broche_phantom}, we have added two separate lesions (PET: Top right, MRI: Top left) and a linear gray-value gradient in non-background regions (PET: Increasing from top left to bottom right, MRI: Increasing from bottom left to top right).}
\end{figure}

\begin{figure}
\begin{tikzpicture}
\draw (0,0) node{\includegraphics[width=0.39\textwidth]{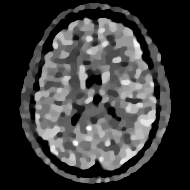}};
\draw (0.4\textwidth,0) node{\includegraphics[width=0.39\textwidth]{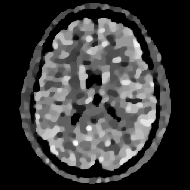}};
\draw (0.705\textwidth,0.132\textwidth) node{\includegraphics[trim=2.7cm 2.2cm 2.5cm 3.5cm, clip, width=0.19\textwidth]{pet_gt.png}};
\draw (0.705\textwidth,0\textwidth) node{\includegraphics[trim=2.7cm 2.2cm 2.5cm 3.5cm, clip, width=0.19\textwidth]{noprior_m50.png}};
\draw (0.705\textwidth,-0.132\textwidth) node{\includegraphics[trim=2.7cm 2.2cm 2.5cm 3.5cm, clip, width=0.19\textwidth]{prior_m50.png}};
\draw (0,-0.4\textwidth) node{\includegraphics[width=0.39\textwidth]{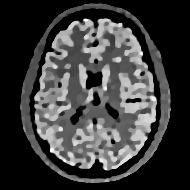}};
\draw (0.4\textwidth,-0.4\textwidth) node{\includegraphics[width=0.39\textwidth]{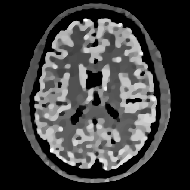}};
\draw (0.705\textwidth,0.132\textwidth-0.4\textwidth) node{\includegraphics[trim=2.7cm 2.2cm 2.5cm 3.5cm, clip, width=0.19\textwidth]{pet_gt.png}};
\draw (0.705\textwidth,0\textwidth-0.4\textwidth) node{\includegraphics[trim=2.7cm 2.2cm 2.5cm 3.5cm, clip, width=0.19\textwidth]{noprior_m200.png}};
\draw (0.705\textwidth,-0.132\textwidth-0.4\textwidth) node{\includegraphics[trim=2.7cm 2.2cm 2.5cm 3.5cm, clip, width=0.19\textwidth]{prior_m200.png}};
\end{tikzpicture}
\caption{\label{fig:pet_recon_results} Standard TV versus structural TV-reconstruction. Top: Strong noise, Bottom: Medium noise. Left: TV regularized reconstruction, Middle: Structural-TV regularized reconstruction, Right: Close up views: Ground truth, standard TV and structural TV (from top to bottom). Mean-squared errors are as follows. Top: 0.3992 (standard TV), 0.3813 (structural TV). Bottom: 0.3445 (standard TV), 0.3209 (structural TV).}
\end{figure}
\section{Conclusion}
In this work, we have analyzed a particular class of structural-prior-based regularization approaches for linear inverse problems in function space. While these approaches have recently been successfully applied to several practically relevant inverse problem settings, an analysis in function space was still missing. Our results show that, in function space, a certain regularity of the prior information in space is necessary in order to obtain an explicit representation of the regularizer. To account for this fact, we have shown how the original minimization problem can alternatively be solved using a saddle-point reformulation that does not require explicit knowledge of the prior. In that context, our main message is that coercivity of the prior is necessary in order to obtain well-posedness (and, consequently, stability by standard results) in non-trivial inverse problem settings. Ultimately, we have shown how to numerically solve the proposed saddle-point reformulation for two proof-of-concept applications including the practically relevant application of structure-guided PET reconstruction.

\subsection*{Acknowledgements}
M.Hinterm\"uller and K.P. acknowledge the support of the Einstein Foundation Berlin within the ECMath project CH12, as well as by the DFG under grant no. HI 1466/7-1 ``Free Boundary Problems and Level Set Methods'' and SFB/TRR154. K.P. acknowledges the financial support of Alexander von Humboldt Foundation. M. Holler acknowledges support of the Austrian Science Fund (FWF) under grant no. P 29192. All the authors would like to thank the Isaac Newton Institute for Mathematical Sciences for support and hospitality during the program ``Variational Methods and Effective Algorithms for Imaging and Vision'' when work on this paper was undertaken. This work was supported by EPSRC grant number EP/K032208/1.

\appendix
\section{}\label{sec:app}

\begin{proof}[Proof of Proposition \ref{pro:equiv_W0_div}.]
It follows by definition of $W^{1,p}(\Omega)$ and $W_{0}^{q}(\di;\Omega)$
that $W_{0}^{q}(\di;\Omega)\subset\ker(\tau)$. In order to
prove $\ker(\tau)\subset W_{0}^{q}(\di;\Omega)$ we show that
$C_{c}^{\infty}(\Omega,\mathbb{R}^{d})$ is dense in $\ker(\tau)$.
Based on an idea according to \cite[Theorem I.2.6]{Girault}, this is done
by showing that any functional $L\in\ker(\tau)^{*}$ vanishing
on $C_{c}^{\infty}(\Omega,\mathbb{R}^{d})$ is zero on $\ker(\tau)$. So let $L\in\ker(\tau)^{*}$
with $L(\phi)=0$ for all $\phi\in C_{c}^{\infty}(\Omega,\mathbb{R}^{d})$.
Since $\ker(\tau)$ is a linear subspace of $W^{q}(\di;\Omega)$,
there exists by the Hahn-Banach extension theorem a continuous extension
$\overline{L}\in W^{q}(\di;\Omega)^{*}$.
Similarly to the space $H(\di;\om)$ \cite{Girault}, we can show that
there thus exist $l=(l_{1},...,l_{d+1})\in L^{p}(\Omega,\mathbb{R}^{d+1})$
such that \[
\overline{L}(g)=\sum_{i=1}^{d}\int_{\Omega}l_{i}g_{i}\,dx+\int_{\Omega}l_{d+1}\di g\,dx\]
for $g=(g_{1},...,g_{d})\in W^{q}(\di;\Omega)$. Now, for $\phi\in C_{c}^{\infty}(\Omega,\mathbb{R}^{d})$
we have\[
0=\overline{L}(\phi)=\sum_{i=1}^{d}\int_{\Omega}l_{i}\phi_{i}\,dx+\int_{\Omega}l_{d+1}\di\phi\,dx\]
from which we conclude that $l_{d+1}\in W^{1,q}(\Omega)$ and $\nabla l_{d+1}=(l_{1},...,l_{d})$.
Thus we can apply the Gauss-Green formula for $W^{q}(\di;\om)$ to $l_{d+1}$
and $h\in\ker(\tau)$ arbitrary and get:\[
0=\int_{\Omega}(\nabla l_{d+1},h)\,dx+\int_{\Omega}l_{d+1}\di h\,dx=L(h)\]
which concludes the proof. 
\end{proof}

\begin{proof}[Proof of Proposition \ref{lbl:Tva_smooth}.]
Note first that since $u\in C^{1}(\om)$, a simple integration by parts yields
\[\tv_{\alpha}^{C}(u)=\sup \left\{\int_{\om} \nabla u\cdot \phi\,dx : \phi\in C_{c}^{1}(\om,\RR^{d}),\; |\phi(x)|\le \alpha(x) \text{ for all }x\in\om \right \}.\]
This shows that $\tv_{\alpha}^{C}(u)\le \int_{\om} \alpha |\nabla u|\,dx$
%and  hence we can assume that $\tv_{\alpha}^{\phi}(u)<\infty$.
and we are left to show
$\int_{\om} \alpha |\nabla u|\,dx\le\tv_{\alpha}^{C}(u).$
In order to do so, it suffices to prove that for every $\delta>0$
\begin{equation}\label{integral_delta_less_tva}
\int_{\om_{\delta}} \alpha |\nabla u|\,dx\le\tv_{\alpha}^{C}(u).
\end{equation}
where $\om_{\delta}:=\{x\in\om : \mathrm{dist}(x,\partial \om)>\delta\}$.
Note that since $u\in C^{1}(\om)$, for every $\delta>0$, $\int_{\om_{\delta}} \alpha |\nabla u|\,dx<+\infty$ and hence $u\in \bv(\om_{\delta})$ with $Du=\nabla u \mathcal{L}^{d}\lfloor\om_{\delta}$. Now
if 
\[V_{\alpha}(u,\om_{\delta})=\sup\left \{ \int_{\om_\delta} u\,\di \phi\, dx : \phi\in C_{c}^{\infty}(\om_{\delta},\RR^{d}),\;|\phi(x)|\le \alpha (x),\text{ for all }x\in \om_{\delta} \right\},\] we have that
$V_{\alpha}(u,\om_{\delta})\le \tv_{\alpha}^{C}(u)$, and in order to establish \eqref{integral_delta_less_tva}, it suffices to show that
\begin{equation}\label{equality_delta}
\int_{\om_{\delta}} \alpha |\nabla u|\,dx=V_{\alpha}(u,\om_{\delta}).
\end{equation}
This follows directly from Proposition \ref{lbl:dual_weighted_TV} and the fact that $u\in \bv(\om_{\delta})$. 

\end{proof}

\bibliographystyle{amsplain}

\bibliography{kostasbib,lit_dat}

\providecommand{\bysame}{\leavevmode\hbox to3em{\hrulefill}\thinspace}
\providecommand{\MR}{\relax\ifhmode\unskip\space\fi MR }
% \MRhref is called by the amsart/book/proc definition of \MR.
\providecommand{\MRhref}[2]{%
  \href{http://www.ams.org/mathscinet-getitem?mr=#1}{#2}
}
\providecommand{\href}[2]{#2}
\begin{thebibliography}{10}

\bibitem{adams_sobolev}
R.A. Adams, \emph{{S}obolev {S}paces}, Academic Press, 1975.

\bibitem{Fusco08}
M.~Amar, V.~De Cicco, and N.~Fusco, \emph{Lower semicontinuity and relaxation
  results in {BV} for integral functionals with {BV} integrants}, ESAIM:
  Control, Optimization and Calculus of Variations \textbf{14} (2008), no.~3,
  456--477, \url{https://doi.org/10.1051/cocv:2007061}.

\bibitem{AmbrosioBV}
L.~Ambrosio, N.~Fusco, and D.~Pallara, \emph{{Functions of bounded variation
  and free discontinuity problems}}, Oxford University Press, USA, 2000.

\bibitem{Anzellotti86}
G.~Anzellotti, \emph{On the minima of functionals with linear growth},
  Rendiconti del Seminario Matematico della Universit\`a di Padova \textbf{75}
  (1986), 91--110, \url{http://www.numdam.org/item?id=RSMUP_1986__75__91_0}.

\bibitem{novaga_weighted}
P.~Athavale, R.L. Jerrard, M.~Novaga, and G.~Orlandi, \emph{Weighted {TV}
  minimization and applications to vortex density models}, Journal of Convex
  Analysis \textbf{24} (2017), no.~4,
  \url{http://cvgmt.sns.it/media/doc/paper/2790/AJNO.pdf}.

\bibitem{Attouch86}
H.~Attouch and H.~Brezis, \emph{Duality for the sum of convex functions in
  general {B}anach spaces}, Aspects of Mathematics and its Applications
  \textbf{34} (1986), 125--133,
  \url{https://doi.org/10.1016/S0924-6509(09)70252-1}.

\bibitem{aubert2006mathematical}
G.~Aubert and P.~Kornprobst, \emph{Mathematical problems in image processing:
  partial differential equations and the calculus of variations}, vol. 147,
  Springer-Verlag New York Inc, 2006.

\bibitem{Aubert_broche_phantom}
B.~Aubert-Broche, A.C. Evans, and L.~Collins, \emph{A new improved version of
  the realistic digital brain phantom}, Neuroimage \textbf{32} (2006), no.~1,
  138--145, \url{http://dx.doi.org/10.1016/j.neuroimage.2006.03.052}.

\bibitem{bathke2017improved}
C.~Bathke, T.~Kluth, C.~Brandt, and P.~Maa{\ss}, \emph{Improved image
  reconstruction in magnetic particle imaging using structural a priori
  information}, International Journal on Magnetic Particle Imaging \textbf{3}
  (2017), no.~1,
  \url{https://journal.iwmpi.org/index.php/iwmpi/article/view/64}.

\bibitem{Borwein91}
J.M. Borwein and A.S. Lewis, \emph{Convergence of best entropy estimates}, SIAM
  Journal on Optimization \textbf{1} (1991), no.~2, 191--205,
  \url{https://doi.org/10.1137/0801014}.

\bibitem{Borwein}
J.M. Borwein and J.D. Vanderwerff, \emph{{C}onvex {F}unctions}, Cambridge
  University Press, 2010.

\bibitem{DalMaso93}
G.~Bouchitt{\'e} and G.~Dal~Maso, \emph{Integral representation and relaxation
  of convex local functionals on {$\text{BV}(\Omega)$}}, Annali della Scuola
  Normale Superiore di Pisa-Classe di Scienze \textbf{20} (1993), no.~4,
  483--533, \url{http://www.numdam.org/item?id=ASNSP_1993_4_20_4_483_0}.

\bibitem{Holler12_subdif}
K.~Bredies and M.~Holler, \emph{A pointwise characterization of the
  subdifferential of the total variation functional}, arXiv:1609.08918 (2012),
  \url{https://arxiv.org/abs/1609.08918}.

\bibitem{Brezis}
H.~Brezis, \emph{Functional analysis, {S}obolev spaces and partial differential
  equations}, Springer Science \& Business Media, 2010.

\bibitem{canny}
J.~Canny, \emph{A computational approach to edge detection}, IEEE Transactions
  on Pattern Analysis and Machine Intelligence \textbf{PAMI-8} (1986), no.~6,
  679--698, \url{http://dx.doi.org/10.1109/TPAMI.1986.4767851}.

\bibitem{Chambolle_Goldman_12}
A.~Chambolle, M.~Goldman, and M.~Novaga, \emph{Fine properties of the
  subdifferential for a class of one-homogeneous functionals}, Advances in
  Calculus of Variations \textbf{8} (2015), no.~1, 31--42,
  \url{https://doi.org/10.1515/acv-2012-0025}.

\bibitem{chambolle2011first}
A.~Chambolle and T.~Pock, \emph{A first-order primal-dual algorithm for convex
  problems with applications to imaging}, Journal of Mathematical Imaging and
  Vision \textbf{40} (2011), no.~1, 120--145,
  \url{http://dx.doi.org/10.1007/s10851-010-0251-1}.

\bibitem{chambolle2015ergodic_primal_dual}
\bysame, \emph{On the ergodic convergence rates of a first-order primal--dual
  algorithm}, Mathematical Programming (2015), 1--35,
  \url{http://dx.doi.org/10.1007/s10107-015-0957-3}.

\bibitem{Ehrhardt_PLV}
M.J. Ehrhardt and S.R. Arridge, \emph{Vector-valued image processing by
  parallel level sets}, IEEE Transactions on Image Processing \textbf{23}
  (2014), no.~1, 9--18, \url{http://dx.doi.org/10.1109/TIP.2013.2277775}.

\bibitem{Ehrhardt_structure_TV}
M.J. Ehrhardt and M.M. Betcke, \emph{Multicontrast {MRI} reconstruction with
  structure-guided total variation}, SIAM Journal on Imaging Sciences
  \textbf{9} (2016), no.~3, 1084--1106,
  \url{https://doi.org/10.1137/15M1047325}.

\bibitem{Ehrhardt_PET_prior}
M.J. Ehrhardt, P.J. Markiewicz, M.~Liljeroth, A.~Barnes, V.~Kolehmainen, J.S.
  Duncan, L.~Pizarro, D.~Atkinson, S.~Ourselin, B.F. Hutton, K.~Thielemans, and
  S.R. Arridge, \emph{{PET} reconstruction with an anatomical {MRI} prior using
  parallel level sets}, IEEE Transaction on Medical Imaging \textbf{35} (2016),
  no.~9, 2189–2199, \url{https://doi.org/10.1109/TMI.2016.2549601}.

\bibitem{Ehrhardt_PET_MRI}
M.J. Ehrhardt, K.~Thielemans, L.~Pizarro, D.~Atkinson, S.~Ourselin, B.F.
  Hutton, and S.R. Arridge, \emph{Joint reconstruction of {PET}-{MRI} by
  exploiting structural similarity}, Inverse Problems \textbf{31} (2015),
  no.~1, 015001, \url{http://stacks.iop.org/0266-5611/31/i=1/a=015001}.

\bibitem{Ekeland}
I.~Ekeland and R.~T\'{e}mam, \emph{{C}onvex {A}nalysis and {V}ariational
  {P}roblems}, SIAM, 1999.

\bibitem{EvansGariepy}
L.C. Evans and R.F. Gariepy, \emph{{Measure theory and fine properties of
  functions}}, CRC Press, Boca Raton, FL, 1992.

\bibitem{Girault}
V.~Girault and P.-A. Raviart, \emph{{F}inite {E}lement {M}ethod for
  {N}avier-{S}tokes {E}quation}, Springer, 1986.

\bibitem{grasmair2010anisotropic}
M.~Grasmair and F.~Lenzen, \emph{Anisotropic total variation filtering},
  Applied Mathematics \& Optimization \textbf{62} (2010), no.~3, 323--339,
  \url{http://dx.doi.org/10.1007/s00245-010-9105-x}.

\bibitem{haberfehlner2014nanoscale}
G.~Haberfehlner, A.~Orthacker, M.~Albu, J.~Li, and G.~Kothleitner,
  \emph{Nanoscale voxel spectroscopy by simultaneous {EELS} and {EDS}
  tomography}, Nanoscale \textbf{6} (2014), no.~23, 14563--14569,
  \url{http://dx.doi.org/10.1039/C4NR04553J}.

\bibitem{hertle1983continuity}
A.~Hertle, \emph{Continuity of the {R}adon transform and its inverse on
  {E}uclidean space}, Mathematische Zeitschrift \textbf{184} (1983), no.~2,
  165--192, \url{http://eudml.org/doc/173356}.

\bibitem{Hintermueller04}
M.~Hinterm\"uller and K.~Kunisch, \emph{Total bounded variation as bilaterally
  constrained optimization problem}, SIAM Journal on Applied Mathematics
  \textbf{64} (2004), 1311--1333,
  \url{https://doi.org/10.1137/S0036139903422784}.

\bibitem{mine_spatial}
M.~Hinterm\"uller, K.~Papafitsoros, and C.N. Rautenberg, \emph{Analytical
  aspects of spatially adapted total variation regularisation}, Journal of
  Mathematical Analysis and Applications \textbf{454} (2017), no.~2, 891 --
  935, \url{https://doi.org/10.1016/j.jmaa.2017.05.025}.

\bibitem{sing_mol}
\bysame, \emph{Variable step mollifiers and applications}, in preparation
  (2017).

\bibitem{Hint_Rau_density}
M.~Hinterm\"uller and C.N Rautenberg, \emph{On the density of classes of closed
  convex sets with pointwise constraints in {S}obolev spaces}, Journal of
  Mathematical Analysis and Applications \textbf{426} (2015), no.~1, 585 --
  593, \url{http://dx.doi.org/10.1016/j.jmaa.2015.01.060}.

\bibitem{hint_rau}
M.~Hinterm\"uller and C.N. Rautenberg, \emph{Optimal selection of the
  regularization function in a generalized total variation model. {P}art {I}:
  {M}odelling and theory}, Journal of Mathematical Imaging and Vision (2017),
  \url{https://doi.org/10.1007/s10851-017-0744-2}.

\bibitem{hint_rau_ros}
M.~Hinterm\"uller, C.N. Rautenberg, and S.~R\"osel, \emph{Density of convex
  intersections and applications}, Proceedings of the Royal Society of London
  A: Mathematical, Physical and Engineering Sciences \textbf{473} (2017),
  no.~2205, \url{http://dx.doi.org/10.1098/rspa.2016.0919}.

\bibitem{hint_rau_tao_langer}
M.~Hinterm\"uller, C.N. Rautenberg, T.~Wu, and A.~Langer, \emph{Optimal
  selection of the regularization function in a weighted total variation model.
  {P}art {II}: {A}lgorithm, its analysis and numerical tests}, Journal of
  Mathematical Imaging and Vision (2017),
  \url{http://dx.doi.org/10.1007/s10851-017-0736-2}.

\bibitem{Hofmann07_nonlinear_tikhonov_banach}
B.~Hofmann, B.~Kaltenbacher, C.~P\"oschl, and O.~Scherzer, \emph{A convergence
  rates result for {T}ikhonov regularization in {B}anach spaces with non-smooth
  operators}, Inverse Problems \textbf{23} (2007), no.~3, 987--1010,
  \url{http://stacks.iop.org/0266-5611/23/i=3/a=009}.

\bibitem{Holler_ictv}
M.~Holler and K.~Kunisch, \emph{On infimal convolution of {TV}-type functionals
  and applications to video and image reconstruction}, SIAM Journal on Imaging
  Sciences \textbf{7} (2014), no.~4, 2258--2300,
  \url{http://dx.doi.org/10.1137/130948793}.

\bibitem{jalalzai2014discontinuities}
K.~Jalalzai, \emph{Discontinuities of the minimizers of the weighted or
  anisotropic total variation for image reconstruction}, arXiv preprint (2014),
  \url{http://arxiv.org/abs/1402.0026}.

\bibitem{Kaipio_structural}
J.P. Kaipio, V.~Kolehmainen, M.~Vauhkonen, and E.~Somersalo, \emph{Inverse
  problems with structural prior information}, Inverse Problems \textbf{15}
  (1999), no.~3, 713, \url{http://stacks.iop.org/0266-5611/15/i=3/a=306}.

\bibitem{Holler_PET_MRI}
F.~Knoll, M.~Holler, T.~K\"osters, R.~Otazo, K.~Bredies, and D.K. Sodickson,
  \emph{Joint {MR}-{PET} reconstruction using a multi-channel image
  regularizer}, IEEE Transactions on Medical Imaging \textbf{36} (2017), no.~1,
  1--16, \url{http://dx.doi.org/10.1109/TMI.2016.2564989}.

\bibitem{koesters_emrecon}
T.~K\"osters, K.~Sch\"{a}fers, and F.~W\"ubbeling, \emph{{EMRECON}: {A}n
  expectation maximization based image reconstruction framework for emission
  tomography data}, IEEE Nuclear Science Symposium and Medical Imaging
  Conference (NSS/MIC), 2011,
  \url{http://dx.doi.org/10.1109/NSSMIC.2011.6153840}, pp.~4365--4368.

\bibitem{oberlin1982mapping}
D.M. Oberlin and E.M. Stein, \emph{Mapping properties of the {R}adon
  transform}, Indiana University Mathematics Journal \textbf{31} (1982), no.~5,
  641--650, \url{http://www.jstor.org/stable/24893225}.

\bibitem{Rasch17}
J.~Rasch, E.-M. Brinkmann, and M.~Burger, \emph{Joint reconstruction via
  coupled {B}regman iterations with applications to {PET}-{MR} imaging},
  arXiv:1704.06073 (2017), \url{https://arxiv.org/abs/1704.06073}.

\bibitem{Resmerita07}
E.~Resmerita and R.S. Anderssen, \emph{Joint additive {K}ullback--{L}eibler
  residual minimization and regularization for linear inverse problems},
  Mathematical Methods in the Applied Sciences \textbf{30} (2007), no.~13,
  1527--1544, \url{https://doi.org/10.1002/mma.855}.

\bibitem{Rigie2015}
D.S. Rigie and P.J La~Rivi\`ere, \emph{Joint reconstruction of multi-channel,
  spectral {CT} data via constrained total nuclear variation minimization.},
  Physics in Medicine \& Biology \textbf{60} (2015), no.~5, 1741--1762 (eng),
  \url{http://dx.doi.org/10.1088/0031-9155/60/5/1741}.

\bibitem{Rockafellar74}
R.T. Rockafellar, \emph{Conjugate duality and optimization}, vol.~16, SIAM,
  1974.

\bibitem{Rudin92}
L.~Rudin, S.~Osher, and E.~Fatemi, \emph{Nonlinear total variation based noise
  removal algorithms}, Physica D \textbf{60} (1992), 259--268,
  \url{http://dx.doi.org/10.1016/0167-2789(92)90242-F}.

\bibitem{sawatzky2013tv}
A.~Sawatzky, C.~Brune, T.~K{\"o}sters, F.~W\"ubbeling, and M.~Burger,
  \emph{{EM}-{TV} methods for inverse problems with poisson noise}, Level set
  and PDE based reconstruction methods in imaging, Springer, 2013,
  \url{http://dx.doi.org/10.1007/978-3-319-01712-9_2}, pp.~71--142.

\bibitem{Schramm17_pls}
G.~Schramm, M.~Holler, A.~Rezaei, K.~Vunckx, F.~Knoll, K.~Bredies, F.~Boada,
  and J.~Nuyts, \emph{Evaluation of parallel level sets and {B}owsher's method
  as segmentation-free anatomical priors for time-of-flight {PET}
  reconstruction}, Submitted (2017).

\bibitem{sethian1999level}
J.A. Sethian, \emph{Level set methods and fast marching methods: evolving
  interfaces in computational geometry, fluid mechanics, computer vision, and
  materials science}, vol.~3, Cambridge university press, 1999.

\bibitem{Steklova2017}
K.~Steklova and E.~Haber, \emph{Joint hydrogeophysical inversion: state
  estimation for seawater intrusion models in {3D}}, Computational Geosciences
  \textbf{21} (2017), no.~1, 75--94,
  \url{http://dx.doi.org/10.1007/s10596-016-9595-y}.

\bibitem{Vunckx2012}
K.~Vunckx, A.~Atre, K.~Baete, A.~Reilhac, C.M. Deroose, K.~Van Laere, and
  J.~Nuyts, \emph{{Evaluation of three MRI-based anatomical priors for
  quantitative PET brain imaging.}}, IEEE Transactions Medical Imaging
  \textbf{31} (2012), no.~3, 599--612 (eng),
  \url{http://dx.doi.org/10.1109/TMI.2011.2173766}.

\end{thebibliography}
\end{document}